\tikzset{->-/.style={decoration={
			markings,
			mark=at position #1 with {\arrow{latex}}},postaction={decorate}}}
\tikzset{-<-/.style={decoration={
			markings,
			mark=at position #1 with {\arrowreversed{latex}}},postaction={decorate}}}
\tikzset{cross/.style={cross out, draw, 
		minimum size=2*(#1-\pgflinewidth), 
		inner sep=0pt, outer sep=0pt}}
\definecolor{dullmagenta}{rgb}{0.4,0,0.4}   
\definecolor{darkblue}{rgb}{0,0,0.4}
\newcommand{\RN}[1]{%
	\textup{\uppercase\expandafter{\romannumeral#1}}%
}
\def\bp{{\bar\partial}}
\def\pa{\partial}
\def\wh{\widehat}
\def\wt{\widetilde}
\def\ve{\varepsilon}
\def\Re{ \mathrm{Re}}
\def\LL{\mathcal{L}}
\def\SS{\mathcal{S}}
\def\C{\mathbb{C}}
\def\D{\mathbb{D}}
\def\P{\mathbf{P}}
\def\R{\mathbb{R}}
\def\AAA{\mathfrak{A}}
\def\BBB{\mathfrak{B}}
\newcommand{\re}{\operatorname{Re}}
\newcommand{\erfc}{\operatorname{erfc}}
\theoremstyle{plain}
\newtheorem*{thm*}{Theorem}
\newtheorem{thm}{Theorem}[section]
\newtheorem{lem}[thm]{Lemma}
\newtheorem{cor}[thm]{Corollary}
\newtheorem{prop}[thm]{Proposition}
\newtheorem{lem*}[thm]{Lemma}
\newtheorem{rmk}[thm]{Remark}
\newtheorem{rmks*}[thm]{Remarks}
\newtheorem{eg*}[thm]{Example}
\newtheorem{egs*}[thm]{Examples}
\theoremstyle{definition}
\newtheorem*{def*}{Definition}
\newtheorem*{question*}{Question}
\theoremstyle{remark}
\numberwithin{equation}{section}
\newcommand{\bfR}{\mathbf{R}}
\newcommand{\bfK}{\mathbf{K}}
\begin{document}
\title[Lemniscate ensembles with spectral singularity]{Lemniscate ensembles with spectral singularity}


\author{Sung-Soo Byun}
\address{Center for Mathematical Challenges, Korea Institute for Advanced Study, 85 Hoegiro, Dongdaemun-gu, Seoul 02455, Republic of Korea}
\email{sungsoobyun@kias.re.kr}

\author{Seung-Yeop Lee}
\address{Department of Mathematics and Statistics, University of South Florida, Tampa, FL, USA}
\email{lees3@usf.edu}

\author{Meng Yang}
\address{Department of Mathematical Sciences, University of Copenhagen, Copenhagen, Universitetsparken 5, 2100 København Ø, Denmark}
\email{my@math.ku.dk}

\date{\today}
\thanks{ Sung-Soo Byun was partially supported by Samsung Science and Technology Foundation (SSTF-BA1401-51), by a KIAS Individual Grant (SP083201) via the Center for Mathematical Challenges at Korea Institute for Advanced Study, by the National Research Foundation of Korea (NRF-2019R1A5A1028324), and by the POSCO TJ Park Foundation (POSCO Science Fellowship).
Meng Yang was supported by VILLUM FOUNDEN research grant no. 29369. }
\keywords{Random normal matrix ensemble, lemniscate, spectral singularity, orthogonal polynomial, Christoffel-Darboux identity, Painlev\'{e} IV critical asymptotics}
\subjclass[2020]{Primary 60B20; Secondary 33C45 
}

\begin{abstract}
We consider a family of random normal matrix models whose eigenvalues tend to occupy lemniscate type droplets as the size of the matrix increases. 
Under the insertion of a point charge, we derive the scaling limit at the singular boundary point, which is expressed in terms of the solution to the model Painlev\'{e} IV Riemann-Hilbert problem.
For this, we introduce a version of the Christoffel-Darboux identity and combine it with the strong asymptotics of the associated orthogonal polynomials due to Bertola, Elias Rebelo and Grava.  
\end{abstract}

\maketitle

\section{Introduction and main results} \label{Section_Intro}

In the theory of random normal matrices \cite{MR2116267, MR2172690}, one usually starts with a suitable real-valued function $\mathcal{W}$ called the \emph{external potential} and consider a normal matrix of size $N$ picked randomly with respect to the measure proportional to
$e^{ -N \, \textup{Tr} \, \mathcal{W}(M) }\,dM$. 
Here $dM$ is the induced surface measure on the space of normal matrices $\{ M \in \C^{ N^2 } : MM^*=M^*M  \}$. 
Then its eigenvalues $\{\lambda_j\}_{1}^N $ behave like equally charged Coulomb particles \cite{forrester2010log,Serfaty,Lewin22} in the external field $N\mathcal{W}$ at specific inverse temperature $\beta=2$, namely, the joint probability distribution of the system is proportional to
\begin{equation}\label{Coulomb distri}
\prod_{j<k}  |\lambda_j-\lambda_k|^2 \prod_{j=1}^{N}  e^{-N  \mathcal{W}(\lambda_j) } \, dA(\lambda_j), \qquad (dA(\lambda):=d^2\lambda/\pi).
\end{equation}
We refer to \cite[Section 5]{byun2022progress} for a recent review. 

As the size of the matrix increases, the eigenvalue ensemble tends to minimise the weighted logarithmic energy functional \cite{ST97}, which can be recognised as the continuum limit of its discrete Hamiltonian, see e.g. \cite{MR3820329,MR4244340}. 
In particular the support of the limiting empirical distribution is given by a certain compact set called the \emph{droplet}. 
Due to Sakai's regularity theory \cite{MR1097025}, it is well known that for a real analytic potential $\mathcal{W}$, all but finitely many boundary points of the droplet are ``regular'' in a proper sense. 
Furthermore in the case that there exists a local Schwarz function near the prescribed boundary point, the possible types of singularities are classified. 
On the other hand, the construction of a droplet containing singular boundary points requires a separate analysis, see \cite{MR4229527,balogh2015equilibrium,MR3280250,MR2921180,MR3454377,byun2023planar,criado2022vector} and references therein for recent works in this direction. 

The detailed statistical information about the joint intensity functions of the eigenvalue system can be effectively analysed by the \emph{correlation (reproducing) kernel} of the orthogonal polynomials with respect to the weighted Lebesgue measure $e^{-N\mathcal{W}}\,dA$.   
Recently, for a quite general class of the potentials, the asymptotic behaviours of the associated planar orthogonal polynomials were obtained by Hedenmalm and Wennman \cite{hedenmalm2017planar}. 
As a consequence, they derived the boundary scaling limit of the correlation kernel, which leads to the local universality at regular boundary points of the droplet. (We also refer to \cite{ameur2011fluctuations} for an earlier work on the local universality at regular bulk points.)

On the other hand, it is intuitively clear that different kinds of scaling limits should appear at singular boundary points. However the description of such scaling limits remains open in general and we aim to contribute to this problem. 
In particular we shall consider two types of singularities; one is the \emph{lemniscate} type singularity arising from the local geometric structure of the droplet (see Figure~\ref{Fig_LemE}) and the other is the \emph{spectral} singularity arising from an insertion of a point charge. 

\begin{figure}[h!]
    \centering
    	\includegraphics[width=0.8\textwidth]{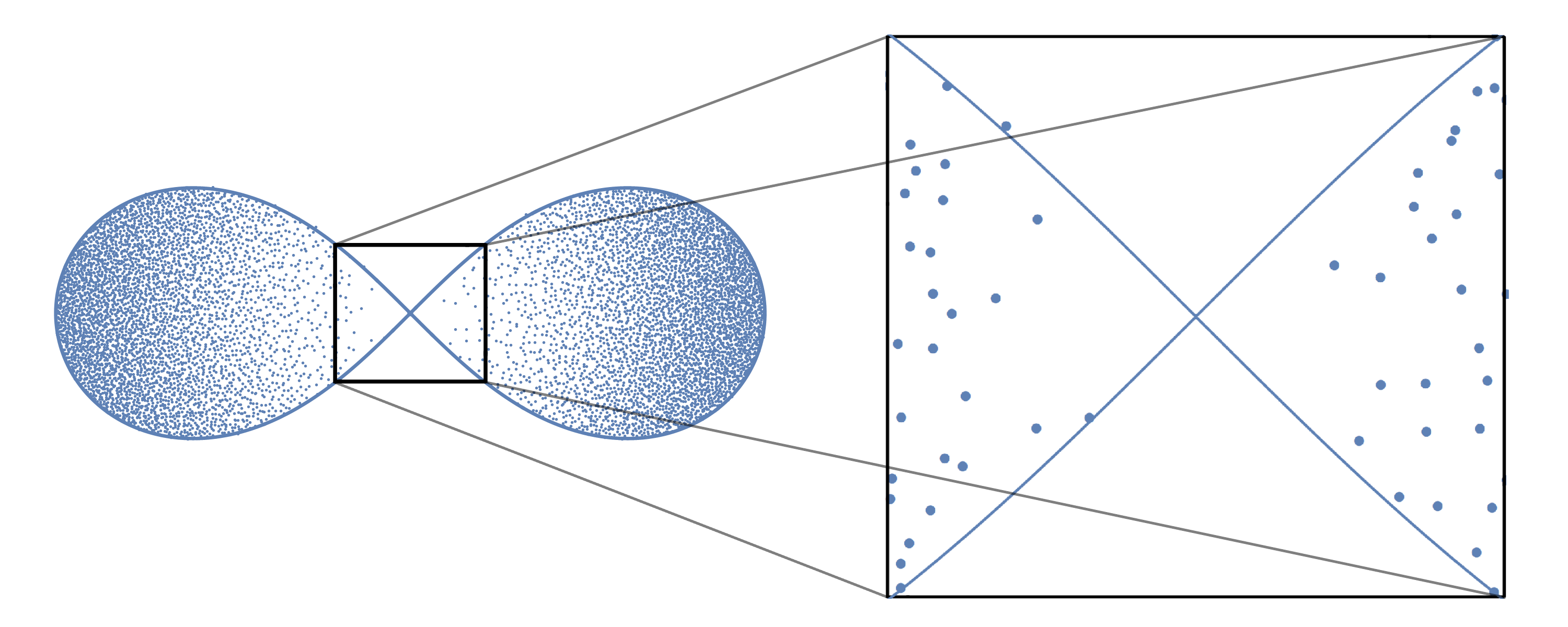}
    \caption{An illustration of a lemniscate ensemble}  
    \label{Fig_LemE}
\end{figure}

\subsection{Setup}

Let us be more precise now in introducing our model that we call the \emph{lemniscate ensemble} following \cite{MR4030288}. 
First we consider the (shifted) Gaussian potential $Q$ of the form 
\begin{equation} \label{Q toy}
	Q(\zeta):=|\zeta-a|^2, \qquad a \ge 0.
\end{equation} 
This is a building block to define
\begin{equation} \label{V_d}
V(\zeta):=\frac1d Q(\zeta^d)=\frac{1}{d}  \, |\zeta^d-a|^2,
\end{equation}
where $d>1$ is a fixed integer. 
We remark that even though $Q$ can be realised as a special case of $V$ with $d=1$, we intentionally distinguish this case for our purpose described below. 
For a given point charge $c >-1$, let
\begin{equation} \label{Qc Vc}
	Q_c(\zeta):=Q(\zeta)-\frac{2c}{N}\log|\zeta|, \qquad V_c(\zeta):=V(\zeta)-\frac{2c}{N}\log|\zeta|.
\end{equation}
Such an extra logarithmic factor is often referred to as a spectral singularity, see e.g. \cite[Chapter 6]{akemann2011oxford}.  
Here the condition $c >-1$ is required to guarantee that the partition functions $Z_N,\wh{Z}_N$ below are finite. 

We shall study random normal matrix ensembles $\{ \zeta_j \}_{1}^N$, $\{ \wh{\zeta}_j \}_{1}^N$ associated with the potentials $V_c$, $Q_c$ respectively. By definition, their joint probability distributions $\P_N,\wh{\P}_N$ are given by
\begin{align}
	d\P_N(\zeta_1,\ldots,\zeta_N)&=\frac{1}{Z_N} \prod_{j<k}  |\zeta_j-\zeta_k|^2 \prod_{j=1}^{N} |\zeta_j|^{2c} e^{-N  V(\zeta_j) } \, dA(\zeta_j),
	\\
	d\wh{\P}_N(\wh{\zeta}_1,\ldots,\wh{\zeta}_N)&=\frac{1}{\wh{Z}_N} \prod_{j<k}  |\wh{\zeta}_j-\wh{\zeta}_k|^2 \prod_{j=1}^{N} |\wh{\zeta}_j|^{2c} e^{-N  Q(\wh{\zeta}_j) } \, dA(\wh{\zeta}_j),
\end{align}
where $Z_N,\wh{Z}_N$ are normalisation constants which turn $\P_N,\wh{\P}_N$ into probability measures, see \cite{deano2019characteristic,MR3946715} for asymptotics of the partition function $\widehat{Z}_N$. 
We also mention that for an integer-valued $c$, the system $\{ \wh{\zeta}_j \}_{1}^N$ has an alternative realisation as eigenvalues of the \emph{induced Ginibre ensemble} \cite{MR2881072}, an extension of the Ginibre ensemble to include zero eigenvalues.


The well-known circular law \cite{ginibre1965statistical} asserts that as $N$ increases, the eigenvalues $\{ \wh{\zeta}_j \}_{1}^N$ tend to be uniformly distributed on the disc $\wh{S}:=\{ \zeta \in \C: |\zeta-a|^2 \le 1  \}$. 
As a consequence, it is easy to observe that $\{ \zeta_j \}_{1}^N$ tend to occupy the droplet 
\begin{equation}
S:=\{ \zeta \in \C: |\zeta^d-a|^2 \le 1 \}
\end{equation}
and that the limiting density on $S$ with respect to the area measure $dA$ is given by 
\begin{equation} \label{Delta V}
	\Delta V(\zeta)= d \, |\zeta|^{2d-2},  \qquad (\Delta:=\pa \bp),
\end{equation}
see \cite[Lemma 1]{balogh2015equilibrium}. Note that the topology of $S$ reveals a phase transition at the value $a=1$, where the droplet $S$ is of lemniscate type having $d$-fold symmetry, see Figure~\ref{Fig_SVd}. 

	\begin{figure}[h!]
	\begin{center}
		\begin{subfigure}[h]{0.32\textwidth}
		\centering
			\includegraphics[width=0.8\textwidth]{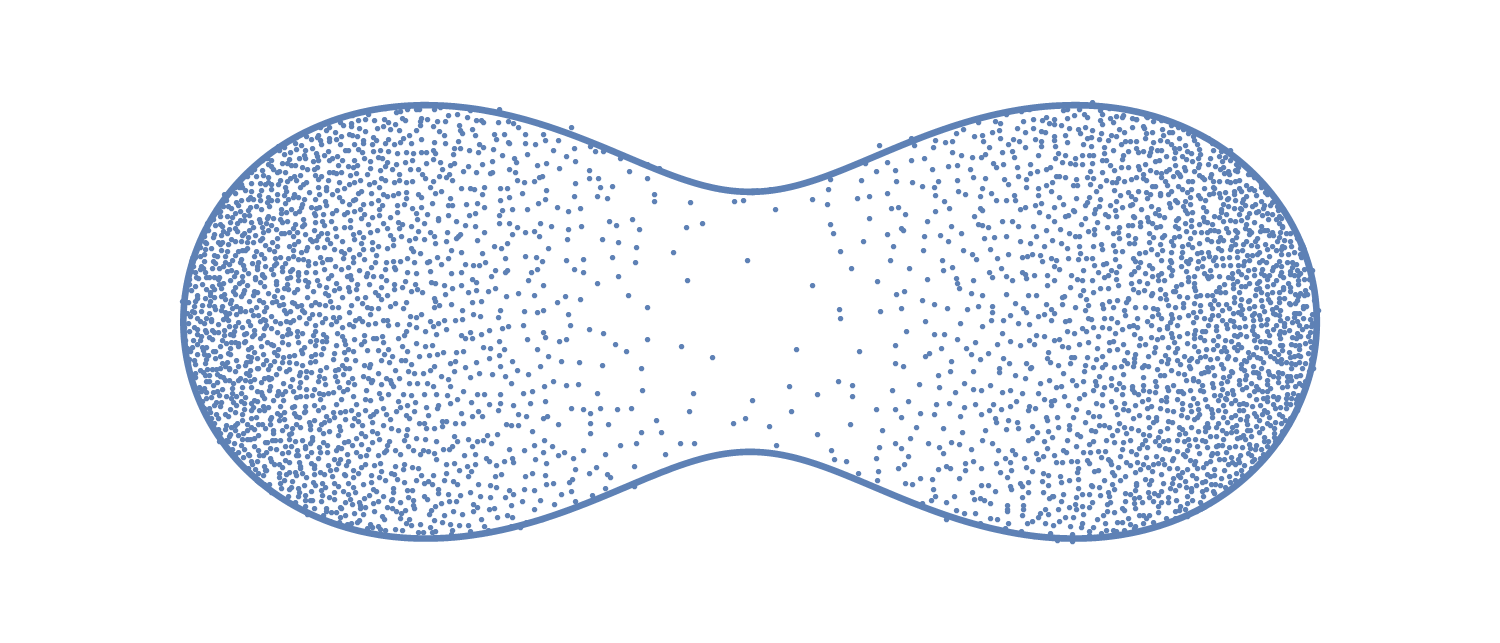}
			\caption{$d=2, a=0.9$}
		\end{subfigure} 
		\begin{subfigure}[h]{0.32\textwidth}
		\centering
			\includegraphics[width=0.8\textwidth]{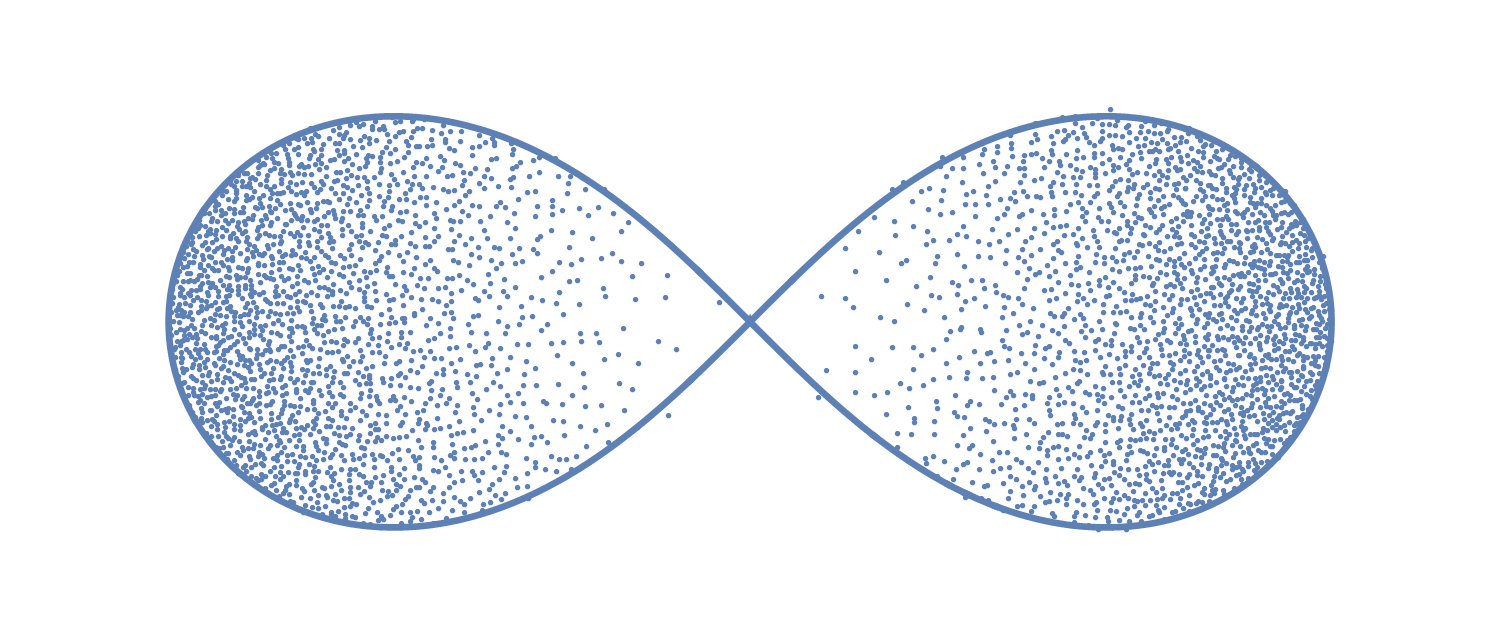}
			\caption{$d=2, a=1$}
		\end{subfigure} 
		\begin{subfigure}[h]{0.32\textwidth}
		\centering
			\includegraphics[width=0.8\textwidth]{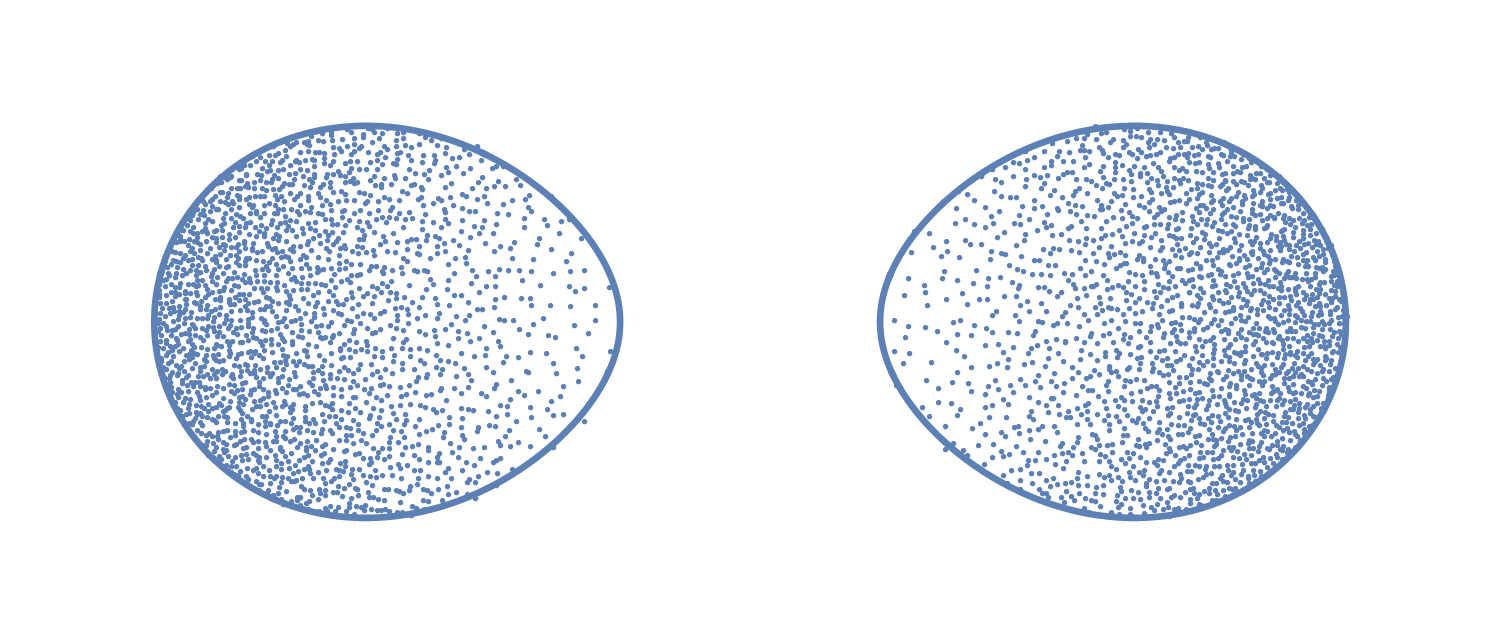}
			\caption{$d=2, a=1.1$}
		\end{subfigure} 
		
			\begin{subfigure}[h]{0.32\textwidth}
			\centering
			\includegraphics[width=0.8\textwidth]{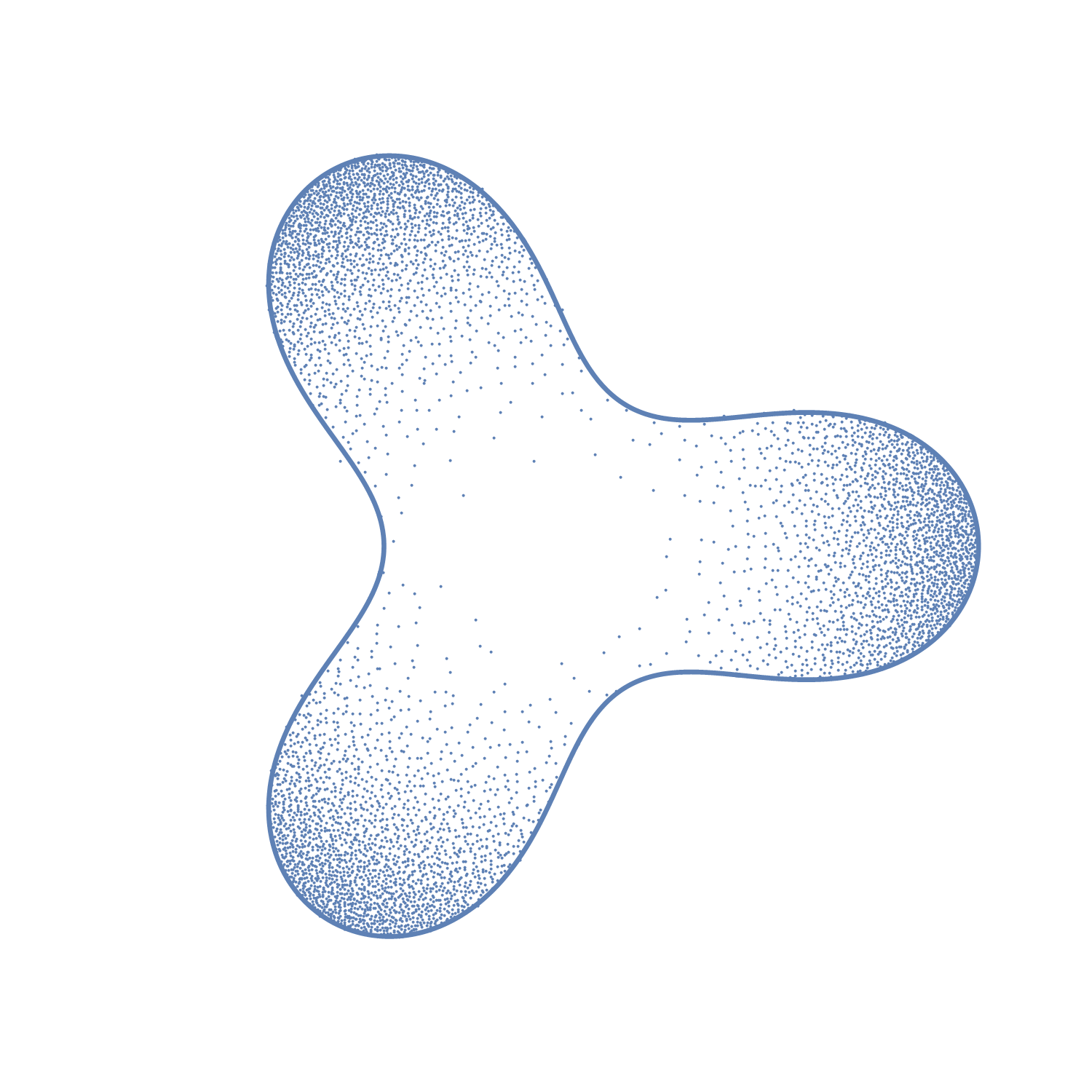}
			\caption{$d=3, a=0.9$}
		\end{subfigure} 
		\begin{subfigure}[h]{0.32\textwidth}
		\centering
			\includegraphics[width=0.8\textwidth]{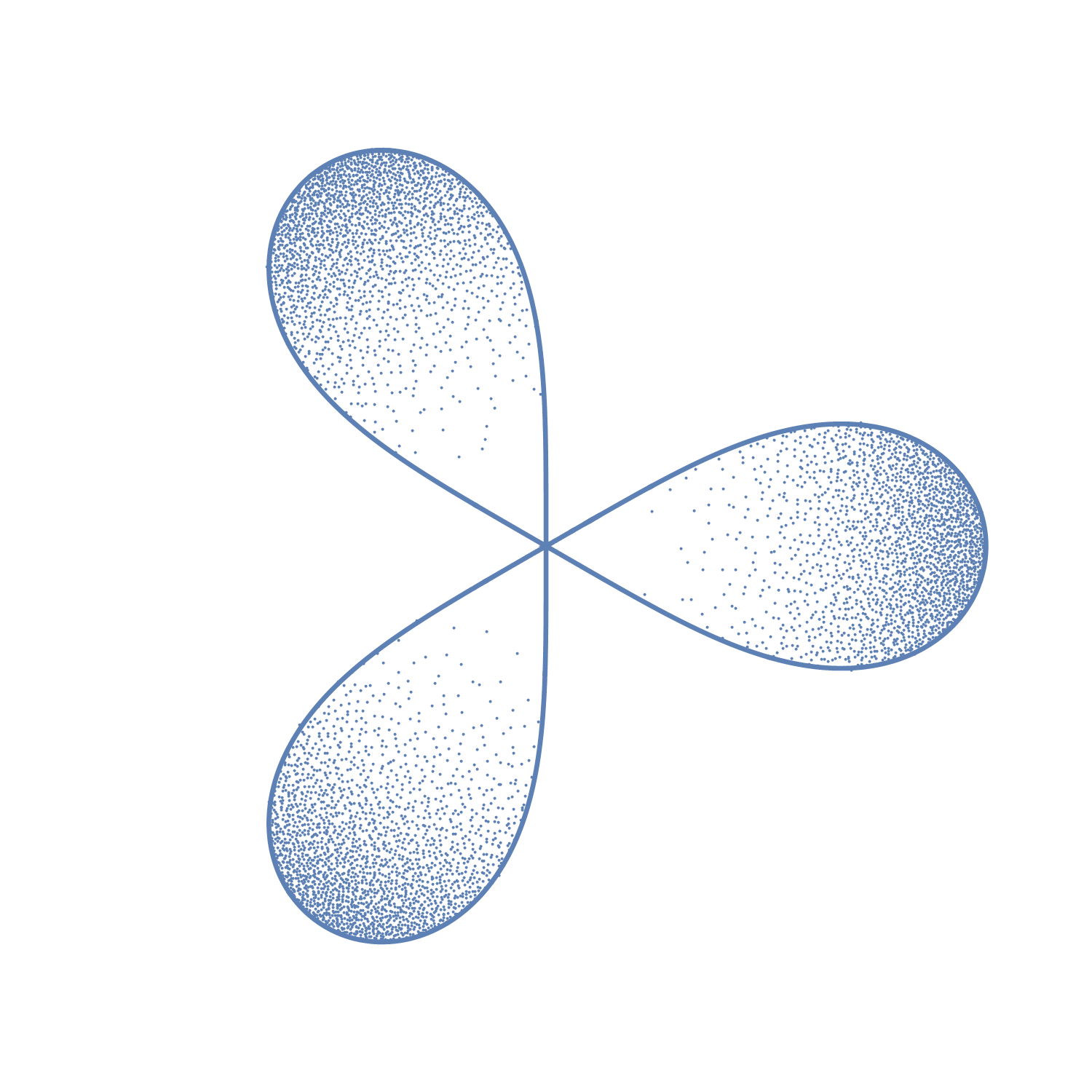}
			\caption{$d=3, a=1$}
		\end{subfigure} 
		\begin{subfigure}[h]{0.32\textwidth}
		\centering
			\includegraphics[width=0.8\textwidth]{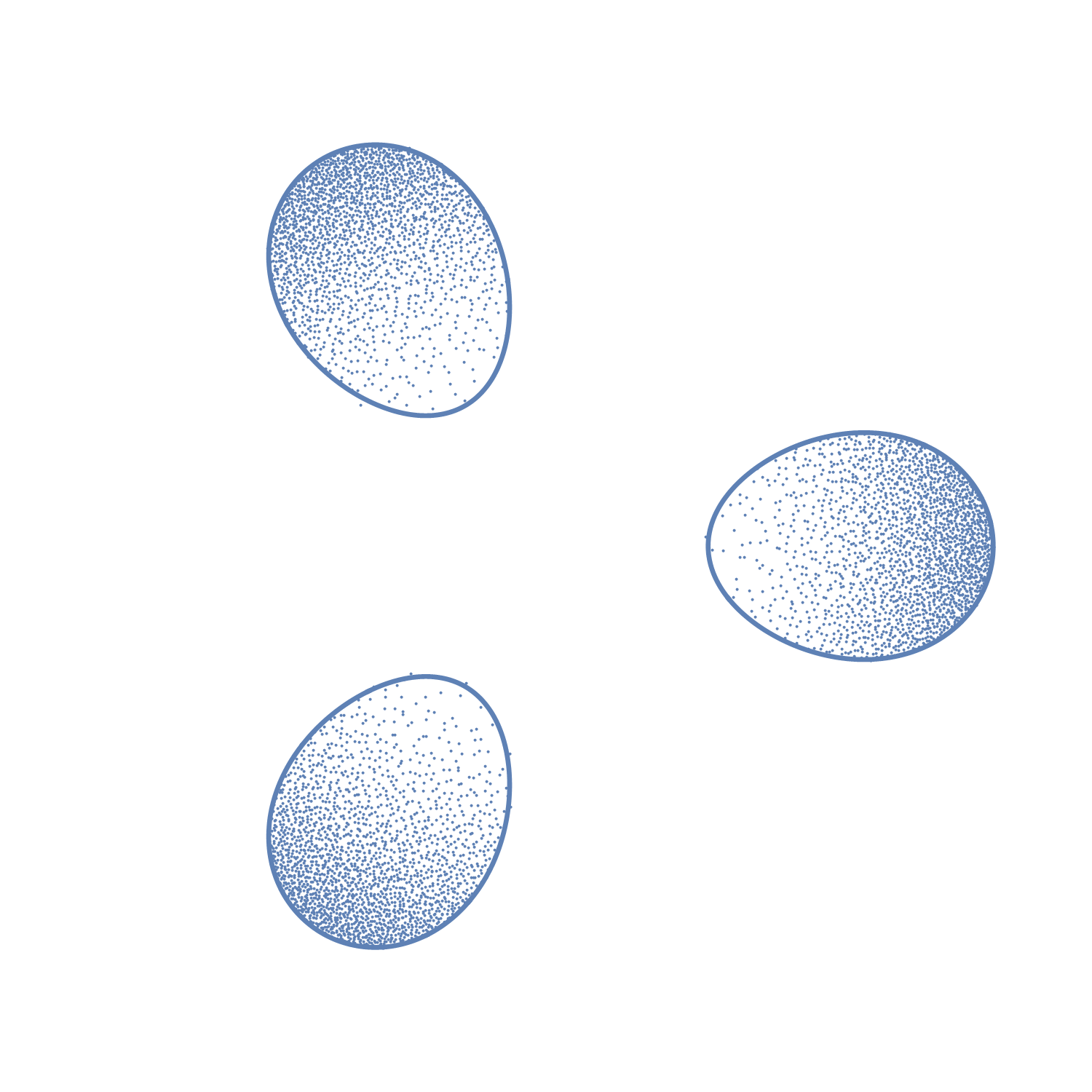}
			\caption{$d=3, a=1.1$}
		\end{subfigure} 
	\end{center}
	\caption{Plots of $\pa S$ for a few values of $d$ and $a$} \label{Fig_SVd}
\end{figure}

We denote by $p_{j,N}^c$, $q_{j,N}^c$ the orthonormal polynomials of degree $j$ with respect to the weighted measure $e^{-NQ_c}\,dA$, $e^{-NV_c}\,dA$, respectively, i.e. 
\begin{align}
\int_{ \C }  p_{j,N}^{c}(\zeta) \overline{p_{k,N}^c (\zeta) } |\zeta|^{2c} e^{-N Q(\zeta)}\,dA(\zeta)=\int_{ \C }  q_{j,N}^c(\zeta) \overline{ q_{k,N}^c (\zeta) } |\zeta|^{2c} e^{-N V(\zeta)}\,dA(\zeta)=\delta_{jk}.
\end{align}
Here $\delta_{jk}$ is the Kronecker delta. 
The strong asymptotics of $p_{j,N}^c$ were extensively studied in \cite{MR3670735,MR3280250,MR3849128,MR3668632}, see also recent works \cite{lee2020strong,MR3962350,berezin2022planar} on the case with multiple point charges. We also refer the reader to \cite{MR2921180,MR3939592,MR3303173,MR2078083,MR3306308,MR3383811} for the strong asymptotics of planar orthogonal polynomials associated with some other classes of potentials. 

Let us write $\bfK_N^c$, $\widehat{\bfK}_N^c$ for the correlation kernels of the point processes $\{ \zeta_j \}_{1}^N$, $\{ \wh{\zeta}_j \}_{1}^N$, respectively. Due to Dyson's determinantal formula, we have the canonical expressions
\begin{align}
	\bfK_{N}^c(\zeta,\eta)&=(\zeta \bar{\eta})^ce^{ -\frac{N}{2} (V(\zeta)+V(\eta))  } \sum_{j=0}^{N-1} q_{j,N}^c(\zeta) \overline{q_{j,N}^c(\eta)}, \label{bfKN}
	\\
	\widehat{\bfK}_{N}^c(\zeta,\eta)&=(\zeta \bar{\eta})^ce^{ -\frac{N}{2} (Q(\zeta)+ Q(\eta)) } \sum_{j=0}^{N-1} p_{j,N}^c(\zeta) \overline{p_{j,N}^c(\eta)}. \label{bfwhKN}
\end{align}
The joint intensity (correlation) functions are then given in terms of the determinant of such correlation kernels, see e.g. \cite{forrester2010log,byun2022progress}.

To describe the local statistics of $\{ \zeta_j \}_{1}^N$, $\{ \wh{\zeta}_j \}_{1}^N$ at the origin, it is convenient to define the rescaled point processes $\{ z_j \}_{1}^N$, $ \{ \wh{z}_j \}_{1}^N$ as 
\begin{equation}
z_j:=(N/d)^{\frac{1}{2d}} \cdot \zeta_j, \qquad \wh{z}_j:=N^{\frac12} \cdot \wh{\zeta}_j,
\end{equation}
see Figure~\ref{Fig_LemE}. 
Here the rescaling order $N^{\frac{1}{2d}}$ is chosen according to the mean eigenvalue density \eqref{Delta V} at the origin. 
By definition, the correlation kernels $K_N^c, \widehat{K}_N^c$ associated with the point processes $\{ z_j \}_{1}^N,\{ \wh{z}_j \}_{1}^N$ are given by
\begin{align} \label{KN KN hat def}
K_N^c(z,w):=\frac{1}{(N/d)^{\frac1d}} \bfK_N^c\Big( \frac{z}{(N/d)^{\frac{1}{2d}}}, \frac{w}{(N/d)^{\frac{1}{2d}}} \Big), \qquad 	\widehat{K}_N^c(z,w):=\frac{1}{N} \widehat{\bfK}_N^c\Big( \frac{z}{\sqrt{N}}, \frac{w}{\sqrt{N}} \Big).
\end{align}

We aim to derive the large-$N$ limits 
\begin{equation}
K^c:=\lim\limits_{N\to\infty} K_N^c, \qquad \widehat{K}^c:=\lim\limits_{N\to\infty} \widehat{K}_N^c
\end{equation}
of the correlation kernels, where the convergence is uniform on compact subsets of $\C$. The existence of the large-$N$ limits can be found in \cite[Theorem 1.1]{ameur2018random} and \cite[Lemma 3]{MR4030288}. Let us also stress here that by \cite[Lemma 1]{MR4030288}, the limiting point processes are indeed determined by their $1$-point densities 
\begin{equation}
R^c(z):=K^c(z,z), \qquad \wh{R}^c(z):=\wh{K}^c(z,z).
\end{equation}

\subsection{Finite-$N$ analysis}

The main ingredient to analyse the correlation kernel is a version of the \emph{Christoffel-Darboux identity}. 
This can be applied to various situations for instance to the case studied in \cite{MR3280250}; cf. see \cite{byun2022determinantal} for a recent implementation.

To describe the Christoffel-Darboux formula, let $P_j$ be the \emph{monic} orthogonal polynomial of degree $j$ satisfying 
\begin{align} \label{Pj monic}
	\int_{ \C }  P_j(\zeta) \overline{ P_k (\zeta) } |\zeta-a|^{2c} e^{-N |\zeta|^2}\,dA(\zeta)= h_j \, \delta_{jk},
\end{align}
where $h_j$ is the orthogonal norm. 
We denote
\begin{equation}
\psi_j(\zeta):= (\zeta-a)^c P_j(\zeta), \qquad \phi_j (\zeta):=(\zeta-a)^c \frac{P_j(\zeta)}{h_j}.  
\end{equation}
and define 
\begin{equation} \label{def of pre kernel}
	\wt{\bfK}_{N}^c(\zeta,\eta):=e^{-N \zeta \bar{\eta} } \sum_{j=0}^{n-1}  \overline{ \phi_j(\eta) } \psi_j(\zeta)= |\zeta-a|^{2c} e^{-N \zeta \bar{\eta} } \sum_{j=0}^{N-1} P_j(\zeta) \overline{ P_j (\eta) }. 
\end{equation}
Note that it is related to $\wh{\bfK}_{N}^c$ in \eqref{bfwhKN} as
\begin{equation} \label{bfwhKN bfwtKN}
\wh{\bfK}_{N}^c(\zeta,\zeta)=\wt{\bfK}_{N}^c(a-\zeta,a-\zeta).
\end{equation}
We obtain the following theorem.

\begin{thm}[\textbf{Christoffel-Darboux formula}] \label{Thm_CDI} 
Suppose that $a \not=0$. 
Then we have 
\begin{align} \label{CDI_v3}
	\begin{split}
		\bp_\eta \wt{\bfK}_{n}^c(\zeta,\eta)
		&=e^{ -N \zeta \bar{\eta} }  \frac{1}{ \tfrac{n+c}{N}h_{n-1}-h_{n}  }	\bp_\eta \overline{ \psi_{n}(\eta) }    \Big( \psi_{n}(\zeta)-\zeta \psi_{n-1}(\zeta) \Big)
		\\
		&\quad -e^{ -N \zeta \bar{\eta} } \frac{P_{n+1}(a)}{P_n(a)} \frac{N\,h_N/h_{N-1} }{ \tfrac{n+c+1}{N} h_n-h_{n+1}   } \overline{ \psi_{n-1}(\eta) }  \Big(  \psi_{n+1}(\zeta)-\zeta \psi_n(\zeta)  \Big).
	\end{split}
\end{align}
\end{thm}

Contrary to the classical Christoffel-Darboux formula for the orthogonal polynomial kernel on the real axis, the identity \eqref{CDI_v3} shows that the summation in \eqref{def of pre kernel} can be expressed in terms of the three last orthogonal polynomials.

\begin{rmk} \label{Rmk_a not0}
For the radially symmetric case when $a=0$, we have
\begin{equation}
	P_j(\zeta)=\zeta^j, \qquad 	h_j= \frac{\Gamma(j+c+1)}{ N^{j+c+1} }.
\end{equation}
Thus Theorem~\ref{Thm_CDI} cannot be directly applied to this case since 
\begin{equation}
\tfrac{N+c}{N}h_{N-1}-h_{N}=\psi_{N}(\zeta)-\zeta \psi_{N-1}(\zeta)=0.
\end{equation}
On the one hand, for $a \not= 0$, it was shown in \cite[Appendix D]{MR3280250} that $\tfrac{N+c}{N}h_{N-1}-h_{N}$ does not vanish. 
\end{rmk}

\begin{rmk}[Three-term recurrence relation]
In Subsection~\ref{Subsec_CDI IG}, we also show that the orthogonal polynomial $P_k$ satisfies the (non-standard) three-term recurrence relation of the form
\begin{equation}  \label{3 term recurrence}
z \, P_k(z)= P_{k+1}(z) +b_k \,P_k(z)+c_k \, z \, P_{k-1}(z),
\end{equation}
where 
\begin{equation}
b_k:=- \frac{P_{k+1}(0)}{ P_k(0) }, \qquad  c_k:= \frac{ P_k(0)-P_{k+1}'(0)-b_k P_k'(0) }{ P_{k-1}(0)  }.
\end{equation}
This relation \eqref{3 term recurrence} plays an important role in the proof of Theorem~\ref{Thm_CDI}. 
We mention that it is shown in \cite[Corollary 5.3]{akemann2021skew} that $P_k$ does not satisfy the standard three-term recurrence relation, (i.e. the relation of the form \eqref{3 term recurrence} with $c_k z$ replaced by $c_k$). 
\end{rmk}

\begin{eg*} \emph{(Exactly solvable case: $c=1$)}
For an integer-valued point charge $c$, one can explicitly express the associated orthogonal polynomials using the well-known special functions. For instance, when $c=1$, we have
\begin{align} \label{Pk c1}
	\begin{split}
P_k(\zeta)&=\sum_{j=0}^{k} a^{k-j} \frac{k!}{j!} \frac{\Gamma(j+1,Na^2)}{\Gamma(k+1,Na^2)} \zeta^j
= \frac{1}{\zeta-a} \Big(  \zeta^{k+1}-e^{ aN(\zeta-a) } \frac{Q(k+1,Na\,\zeta)}{ Q(k+1,Na^2) } a^{k+1}  \Big)
	\end{split}
\end{align}
and 
\begin{equation} \label{hk c1}
	h_k	= \frac{(k+1)!}{N^{k+2}} \frac{Q(k+2,N a^2)}{ Q(k+1,Na^2) },
\end{equation}
see  \cite[Section 3]{MR1982915}.
Here $Q$ is the regularised incomplete Gamma function. 
Then by using some basic properties of the incomplete Gamma function, one can directly check the Christoffel-Darboux formula \eqref{CDI_v3} as well as the three-term recurrence relation \eqref{3 term recurrence}. 
\end{eg*}

Due to the relation \eqref{bfwhKN bfwtKN}, one can notice that the use of Theorem~\ref{Thm_CDI} can be made to derive the asymptotic behaviours of $\widehat{K}_N^c$ in \eqref{KN KN hat def}.
Furthermore, the behaviours of $K_N^c$ follows from the following proposition. 

\begin{prop}[\textbf{Multi-fold transform}]\label{Prop_multitrans}
	For each $c>-1$ and $d \in \mathbb{N}$, we have 
 \begin{align}
  \bfK_{dN}^c(\zeta,\eta)& =d(\zeta \bar{\eta})^{d-1} \sum_{l=0}^{d-1} \widehat{\bfK}_N^{ \frac{c+l+1}{d}-1 }(\zeta^d,\eta^d),    \label{bf K whK N rel}
  \\
  \label{K whK N rel}
		K_{dN}^c(z,w) &= d(z\bar{w})^{d-1} \sum_{l=0}^{d-1} \widehat{K}_N^{ \frac{c+l+1}{d}-1  } (z^d,w^d).
 \end{align}
\end{prop}

We refer to \cite[Proposition 2.1]{claeys2008universality} and \cite[Appendix B]{akemann2001qcd3} for related statements on Hermitian matrix models.
This proposition follows from a simple relation \eqref{ONP transform} between the orthogonal polynomials. 

\subsection{Scaling limits}

We now focus on the critical regime when $a \to 1$ in a way that the scaled parameter 
\begin{equation} \label{a critical}
\mathcal{S}:=2\sqrt{N}(a-1)
\end{equation}
remains bounded. 
An analogue of such regime in the Hermitian random matrix theory is  called \emph{multi-criticality} \cite{MR1949138,MR2434886}, see also \cite{MR4229527} for the chiral counterpart.

We first introduce the Riemann-Hilbert problem for $\widetilde\Psi^c$ that describes the special solution of the Painlev\'{e} \RN{4} appeared in \cite{MR3849128}. 
See \cite[Subsection 2.2]{MR3849128} for more details. 
For a given real parameter $s$, the matrix $\widetilde\Psi^c(\zeta;s)$ of size $2$ is analytic in $\C \backslash (\Gamma_1 \cup \Gamma_\infty \cup \R_-)$ and admits non-tangential boundary values. Here $\Gamma_\infty=i \R$ and $\Gamma_1$ is a contour in the left-half plane crossing the origin as shown in  Figure~\ref{Fig_jump0}.  One may simply assume that  $\Gamma_1$ comes from the infinity straight to the origin in an angle between $\pi$ and $3\pi/2$ and going straight back to the infinity in an angle between $\pi/2$ and $\pi$. 

\begin{figure}[h!]  
\centering
\begin{tikzpicture}[scale=0.85]
\draw[thick,postaction={decorate, decoration={markings, mark = at position 0.5 with {\arrow{>}}}} ] (-6,0) -- (0,0);
\draw[thick,postaction={decorate, decoration={markings, mark = at position 0.5 with {\arrow{>}}}} ] (0,0) -- (0,3);
\draw[thick,postaction={decorate, decoration={markings, mark = at position 0.5 with {\arrow{>}}}} ] (0,-3) -- (0,0);

	\draw[thick, postaction={decorate, decoration={markings, mark = at position 0.5 with {\arrow{>}}}} ]
				(0,0)[out=150, in=5] to
				(-6,1.5);

	\draw[thick, postaction={decorate, decoration={markings, mark = at position 0.5 with {\arrow{>}}}} ]
				(-6,-1.5)[out=-5, in=-150] to
				(0,0);

\foreach \Point/\PointLabel in {(0,0)/0}
\draw[fill=black] \Point circle (0.05) node[below right] {$\PointLabel$};

\foreach \Point/\PointLabel in {(-1,3)/\Omega_{2},(-5,0.8)/\Omega_{0}, (0.6,0.6)/\Omega_{\infty},(0.4,-2)/\Gamma_{\infty},(-5.5,-1.7)/\Gamma_1}
\draw[fill=black]  
 \Point node[below right] {$\PointLabel$};
 \end{tikzpicture}
 \caption{The jump contours of $\widetilde\Psi^c(\zeta;s)$. } \label{Fig_jump0} 
 \end{figure}
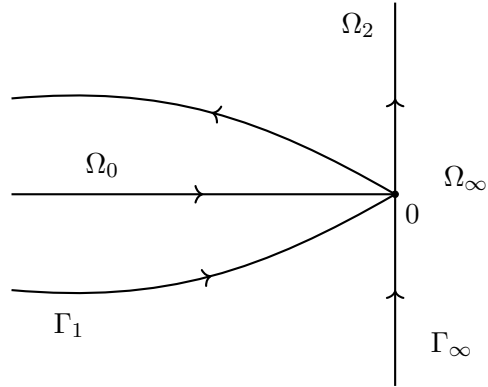
The jump conditions and the asymptotic behaviours of $\widetilde\Psi^c$ are given as follows:
\begin{itemize}
	\item The jump condition is given by
	\begin{equation} \label{RHmodel jump}
	\widetilde\Psi^c_{+}(\zeta;s)=\widetilde\Psi^c_{-}(\zeta;s) 
	\begin{cases}
		\begin{pmatrix}
			1 & -1
			\\
			0 & 1
		\end{pmatrix}, & \zeta \in \Gamma_1,
		\smallskip 
		\\
		\begin{pmatrix}
			1 & 0
			\\
			1 & 1
		\end{pmatrix}, & \zeta \in \Gamma_\infty,
		\smallskip 
		\\
		e^{-c \pi i \sigma_3} & \zeta \in \R_- ,
	\end{cases}   
	\end{equation}
	where 
	$\sigma_3$ is the third Pauli matrix. Here $\widetilde\Psi_{\pm}^c(\zeta,s)$ are continuous boundary values on the left and right of the jumping contours;
	\item As $\zeta \to \infty$, 
	$$\widetilde\Psi^c(\zeta;s)= \bigg( I+\frac{\Psi_1(s)}{\zeta}+\frac{\Psi_2(s)}{\zeta^2}+O( \tfrac{1}{\zeta^3}) \bigg)\zeta^{ -\frac{c}{2} \sigma_3 }\, e^{-(\frac{\zeta^2}{4}+\frac{s}{2}\zeta) \sigma_3}. $$
 Here
$$
\begin{aligned}
\Psi_1(s)&=\begin{pmatrix}
			H(s) & \frac{Z(s)}{U(s)}
			\\
			U(s) & -H(s)
		\end{pmatrix},\\
\Psi_2(s)&=\begin{pmatrix}
		\frac{1}{2}(H(s)^2+Z(z)-sH(s)) & \frac{Z(s)(Z(s)+c-Y(s)s-H(s)Y(s))}{U(s)Y(s)}
			\\
			U(s)(H(s)+Y(s)-s) &\frac{1}{2}(H(s)^2+Z(z)+sH(s))
		\end{pmatrix}.		
\end{aligned}
$$
	\item As $\zeta \to 0$ in the region $\Omega_\infty$, 
	\begin{equation}
	 \widetilde\Psi^c(\zeta;s)= \zeta^{  -\frac{c}{2} \sigma_3 } \cdot O(1). 
	\end{equation}
\end{itemize}

By \cite{MR3849128} and the references therein, the unique solution to the above Riemann-Hilbert problem is related to the Painlev\'e \RN{4} equation by the following lax pair,
\begin{equation}\label{differ 1}
    \frac{d}{d \zeta}\widetilde\Psi^c=A\widetilde\Psi^c, \qquad  \frac{d}{d s}\widetilde\Psi^c=B\widetilde\Psi^c, 
\end{equation}
where
\begin{equation}
A={A}(\zeta;s)=-\frac{\zeta+s}{2}\sigma_3+\begin{bmatrix}0&\frac{Z}{U}\\
-U&0\end{bmatrix} +\frac{1}{\zeta} \begin{bmatrix}-\frac{c}{2}-Z&\frac{Z^2+cZ}{YU}\\
-UY&Z+\frac{c}{2}\end{bmatrix} 
\end{equation} and 
\begin{equation}
{B}={B}(\zeta;s)=-\frac{\zeta}{2}\sigma_3+\begin{bmatrix}0&\frac{Z}{U}\\
-U&0\end{bmatrix}.    \end{equation}
The compatibility condition of the linear system \eqref{differ 1} gives $A_s-B_{\zeta}+[A,B]=0.$ 
It follows that     
   \begin{equation} \label{U' Z' Y'}
   U'=U(Y-s), \qquad Z'=ZY-\frac{Z^2+c Z}{Y},\qquad Y'=s Y-Y^2-2Z-c.    
   \end{equation}
Using the above differential equations, one can observe that $Y$ satisfies the Painlev\'{e} \RN{4} equation 
\begin{equation} \label{def of Y}
Y''=\frac12 \frac{ (Y')^2 }{Y}+\frac32 Y^3-2s Y^2+\Big( 1+\frac{s^2}{2}+c \Big) Y-\frac{c^2}{2Y}.
\end{equation}
The functions $Y(s),Z(s),$ and $H(s)$ are interrelated through
\begin{equation} \label{def of U Z H}
Z=\frac12(sY-c-Y'-Y^2), \qquad H=\Big(s-\frac{c}{Y}-Y\Big) Z-\frac{Z^2}{Y}.
\end{equation} 
We also write 
\begin{equation}\label{def of W}
W:=Z/U.
\end{equation}
In terms of the function $W$, we define  
\begin{align} \label{def of AAA}
\AAA \equiv \AAA(s) &: = \Big[ \frac{W''(s)}{W(s)} -\Big( \frac{W'(s)}{W(s)} \Big)^2 \Big]^{-1} 
\\
\label{def of BBB}
\BBB \equiv \BBB(s)
&=2 \frac{W'(s)}{W(s)\AAA(s)} +\frac{W'(s)}{W(s)}\Big(\frac{W''(s) }{W(s)}-\frac{W'''(s)}{W'(s)}\Big),
\end{align}
cf. \eqref{W' W''} and \eqref{W'''}. 

We write $\wh{R}^c_{\textup{edge}}$, $R^c_{\textup{edge}}$ for the associated limiting $1$-point functions when $a$ is given by \eqref{a critical}. Let us also denote by $\wh{K}^c_{\textup{edge}}$, $K^c_{\textup{edge}}$ the corresponding correlation kernels.
Let us define an analytic continuation, $\Psi^c(\zeta;s)$, of the matrix function $\widetilde\Psi^c(\zeta;s)$ by

\begin{equation}\label{def psic}
	\Psi^c(\zeta;s):=
	\begin{cases}
		\widetilde\Psi^c(\zeta;s), & \zeta \in \Omega_\infty,
		\smallskip 
		\\
		\widetilde\Psi^c(\zeta;s) \begin{pmatrix}
			1 & 0
			\\
			1 & 1
		\end{pmatrix}, & \zeta \in \Omega_2,
		\smallskip 
		\\
		\widetilde\Psi^c(\zeta;s) \begin{pmatrix}
			1 & -1
			\\
			1 & 0
		\end{pmatrix}, & \zeta \in \Omega_0,
	\end{cases}   
	\end{equation}
where the regions $\Omega_0$, $\Omega_2$ and $\Omega_\infty$ are specified in Figure \ref{Fig_jump0}.

\begin{thm}\label{Thm_bInsertion} 
Let $\Psi_{jk}^c$ be the $(j,k)$ entry of $\Psi^c$ defined in \eqref{def psic}.
\begin{itemize}
    \item \textup{(\textbf{Induced Ginibre ensemble with a point charge at a boundary point})} For each $c \in (-1,0)$, we have 
\begin{align}\label{eq rhat}
\begin{split}
 \wh{R}_{\textup{edge}}^c(z) & =  \frac{ \AAA(\SS) \,(-1)^{c}  }{ \sqrt{2\pi} } \,e^{ -\frac{ (z-\SS)^2 }{4} } \,z^{ \frac{c}{2} }  \,\Psi^c_{21}(-z;\SS) \,W(\SS)
\\
&\quad \times \int_{-\infty}^{\bar{z}}   e^{ -z w-\frac{ (w-\SS)^2 }{4} } w^{ \frac{c}{2} }   \Big[   \Psi^c_{11}(-w;\SS) \mathcal{X} + \Psi^c_{21}(-w;\SS) \mathcal{Y}    +Z(\SS) \Big] \,dw, 
\end{split}
\end{align}
where 
\begin{equation}
 \mathcal{X} \equiv \mathcal{X}(\SS) = \frac{Z}{w}-z+\AAA \BBB-  \frac{Z+c}{Y} + Y, \qquad   \mathcal{Y} \equiv \mathcal{Y}(\SS) = -\frac{W}{w}\frac{Z+c}{Y} .
\end{equation}
\item \textup{(\textbf{Recursive formula})} For each $c>-1,$ we have
	\begin{equation} \label{recursion_1 pt}
		\wh{R}^{c+1}_{\textup{edge}}(z)=\wh{R}^{c}_{\textup{edge}}(z)-		 \frac{|\wh{K}^c_{\textup{edge}}(0,z)|^2}{\wh{R}^c_{\textup{edge}}(0)}.
	\end{equation}
\end{itemize}
\end{thm}

\begin{rmk}
It is known \cite[Theorem 2.5]{MR3849128} that the solution $\widetilde\Psi^c(\zeta;\SS)$ exists for $\SS<-0.7701449782$. Also it is expected that there are discrete values of $\SS$ where the solution does not exist.  Hence the above theorem makes sense for all $\SS$ except those discrete values. The asymptotic behaviours of the $1$-point function in \eqref{eq rhat} will be discussed in Appendix~\ref{appendix_asymptotic 1pt}.
\end{rmk}

As an immediate consequence of Proposition~\ref{Prop_multitrans} and Theorem~\ref{Thm_bInsertion}, we obtain the scaling limit of the lemniscate ensembles. 

\begin{thm}[\textbf{\textup{Lemniscate ensemble with $d$-fold symmetry}}]
Under the same assumptions of Theorem~\ref{Thm_bInsertion}, we have that for each $d>1$ and $c>-1$, 
\begin{align}
\begin{split} \label{lemniscate 1pt function}
R^c_{\textup{edge}}(z) 
&= d\, |z|^{2d-2} \sum_{l=0}^{d-1}  \frac{ \AAA(\SS) \,(-1)^{ \frac{c-l}{d} }  }{ \sqrt{2\pi} } \,e^{ -\frac{ (z^d-\SS)^2 }{4} } \,z^{ \frac{c-l}{2} }  \,\Psi_{21}^{ \frac{c-l}{d}  }(-z^d;\SS) \,W(\SS)
\\
&\quad \times \int_{-\infty}^{\bar{z}^d}   e^{ -z^d w-\frac{ (w-\SS)^2 }{4} } w^{  \frac{c-l}{2d} }   \Big[   \Psi_{11}^{ \frac{c-l}{d}  }(-w;\SS) \mathcal{X} + \Psi_{21}^{ \frac{c-l}{d}  }(-w;\SS) \mathcal{Y}    +Z(\SS) \Big] \,dw. 
\end{split}
\end{align}
\end{thm}

\begin{rmk} \label{Rmk_bulk}
For a fixed $a \in [0,1)$, thus when the origin is inside of the droplet $S$ (see Figure~\ref{Fig_SVd} (A) and (D)), it was shown in \cite{ameur2018random} that the limiting $1$-point functions $\wh{R}^c_{\textup{bulk}},R^c_{\textup{bulk}}$ are given by
\begin{equation} \label{K ML da}
	\wh{R}^c_{\textup{bulk}}(z)=  |z|^{2c} e^{-|z|^2}E_{1,1+c} (|z|^2),\qquad R^c_{\textup{bulk}}(z)=d |z|^{2c}e^{ -|z|^{2d} } E_{\frac{1}{d},\frac{1+c}{d}}(|z|^2),
\end{equation}
where $E_{a,b}$ is the two-parametric Mittag-Leffler function
\begin{equation}\label{E_{a,b}}
E_{a,b}(z):= \sum_{k=0}^\infty \frac{z^k}{\Gamma(ak+b)}.
\end{equation}
We mention that the approach in \cite{ameur2018random} using Ward's equation relies on the fact that the limiting $1$-point functions \eqref{K ML da} are rotationally symmetric.
Our approach can also be applied to the bulk case when $a \in (0,1)$, which provides an alternative derivation of the limiting one-point functions \eqref{K ML da}, see Theorem~\ref{Thm_largeN pc}. 
\end{rmk}

An additional advantage of our approach using the Christoffel-Darboux identity lies in the fact that both in Theorems~\ref{Thm_bInsertion} and ~\ref{Thm_largeN pc}, it indeed allows to compute not only the leading order asymptotic but also its fine asymptotic as long as the detailed strong asymptotics of the associated orthogonal polynomial are provided. 
We refer to \cite{lee2016fine,byun2021universal} for previous works in this direction on exactly solvable models. 

\medskip 

The rest of this paper is organised as follows.
\begin{itemize}
    \item  In Section~\ref{Section_RecMFt}, we derive \eqref{recursion_1 pt} and \eqref{lemniscate 1pt function} by showing the recursive formula and the multi-fold transformation of correlation kernels in a general context.
    \item  In Section~\ref{Section_CDI}, we present the  Christoffel-Darboux identities for some class of planar orthogonal polynomials and show Theorem~\ref{Thm_CDI}.
    \item In Section~\ref{Section_largeN}, we derive the large-$N$ limit of the correlation kernel and complete the proof of Theorem~\ref{Thm_bInsertion}. 
    \item This article contains several appendices. 
In appendices~\ref{Section_OP fine asymp} and ~\ref{Section_OP norms fine asymp}, we compile detailed computations used in the proofs of Theorem \ref{Thm_bInsertion}. 
In Appendix~\ref{appendix_asymptotic 1pt}, we discuss the asymptotic behaviours of the $1$-point function in Theorem~\ref{Thm_bInsertion}. 
In Appendix~\ref{appendix_bulk}, we re-derive the bulk scaling limits in Remark~\ref{Rmk_bulk} based on our strategy of using the Christoffel-Darboux formula. 
\end{itemize}

\section{Recursive formula and multi-fold transformation} \label{Section_RecMFt}

In this section, we present the \emph{recursive formula} and \emph{multi-fold transformation} of correlation kernels. 
First let us recall some well-known facts.

Note that $E_{1,c}(z)=z^{-c} e^{z} P(c,z)$, where $P(c,z):=\frac{1}{\Gamma(c)}\gamma(c,z)$ is the regularised incomplete Gamma function. 
For $a \in [0,1)$ fixed, we write $\wh{K}^c_{\textup{bulk}}$, $K^c_{\textup{bulk}}$ for the corresponding correlation kernels.
It then follows from \eqref{K ML da} that
\begin{equation} \label{K R wh bulk}
	\wh{R}^c_{\textup{bulk}}(z)=P(c,|z|^2),\qquad \wh{K}^c_{\textup{bulk}}(z,w)=G(z,w)P(c,z\bar{w}),
	\end{equation}
where 
\begin{equation}
G(z,w):=e^{z\bar{w}-|z|^2/2-|w|^2/2}
\end{equation}
is the bulk Ginibre kernel. 
On the other hand when $c=0$, we have the boundary Ginibre kernel 
 \begin{equation} \label{K bGinibre}
	 	\wh{R}^0_{\textup{edge}}(z)=\tfrac12\erfc( -\tfrac{z+\bar{z}-\SS}{\sqrt{2}} ), \qquad \wh{K}^0_{\textup{edge}}(z,w)=G(z,w)\tfrac12 \erfc( -\tfrac{z+\bar{w}-\SS}{\sqrt{2}} ). 
 \end{equation} 

\subsection{Recursive formula} Let us define the \emph{Berezin kernel}
\begin{equation}
\wh{B}^c_N(z,w):= \frac{|\wh{K}_N^c(z,w)|^2}{\wh{R}_N^c(z)}.
\end{equation}
We now derive the following recursive formula for $\wh{R}^c_N$, see \cite[Lemma 7.6.2]{ameur2011fluctuations} for a similar statement. 

\begin{lem} \label{Lem_recursion}
For any $a \ge 0$ and $c>-1,$ we have
\begin{equation}
\wh{R}_{N}^{c+1} (z)=\wh{R}_{N+1}^c(z)-\wh{B}_{N+1}^c(0,z).
\end{equation}
\end{lem}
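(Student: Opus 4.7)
The plan is to establish the identity by the standard point-insertion (conditioning) mechanism for determinantal point processes, which is the probabilistic counterpart of inserting an additional point charge at the origin; this parallels the argument for \cite[Lemma 7.6.2]{ameur2011fluctuations}.

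I would first identify the left-hand side probabilistically. Starting from the $(N+1)$-point joint density
\begin{equation*}
d\wh\P_{N+1}\propto\prod_{i<j}|\wh\zeta_i-\wh\zeta_j|^2\prod_{j=1}^{N+1}|\wh\zeta_j|^{2c}e^{-(N+1)Q(\wh\zeta_j)}\,dA(\wh\zeta_j),
\end{equation*}
I would condition on the event that one of the $N+1$ particles is located at the origin. Fixing $\wh\zeta_{N+1}=0$, the Vandermonde factor $\prod_{i<j}|\wh\zeta_i-\wh\zeta_j|^2$ contributes the extra weight $\prod_{j=1}^{N}|\wh\zeta_j|^2$ to the conditional law of the remaining particles. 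This combines with $\prod_j|\wh\zeta_j|^{2c}$ to produce exactly the $N$-point ensemble carrying the promoted point charge $c+1$ (with the same exponential background). Hence the conditional $1$-point function at $z$ equals $\wh R_N^{c+1}(z)$.

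Next, I would compute the same conditional $1$-point density directly from the rank-one update formula for determinantal kernels: if a determinantal process has kernel $K$, conditioning on a particle at $z_0$ yields a determinantal process with kernel
\begin{equation*}
K(\zeta,\eta)-\frac{K(\zeta,z_0)\,K(z_0,\eta)}{K(z_0,z_0)}.
\end{equation*}
Specialising to $K=\wh\bfK_{N+1}^c$ and $z_0=0$, and restricting to the diagonal $\zeta=\eta=z$, one obtains the value $\wh R_{N+1}^c(z)-\wh B_{N+1}^c(0,z)$. Equating the two expressions for the conditional $1$-point density then gives the claimed identity.

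The main technical subtlety I expect is that the $(\zeta\bar\eta)^c$ prefactor built into $\wh\bfK_{N+1}^c$ makes the pointwise value $\wh\bfK_{N+1}^c(0,0)$ singular for $c\in(-1,0)$ and vanishing for $c>0$, so the Berezin kernel $\wh B_{N+1}^c(0,z)$ must be interpreted as the regularised limit
\begin{equation*}
\wh B_{N+1}^c(0,z)=\lim_{\zeta\to 0}\frac{|\wh\bfK_{N+1}^c(\zeta,z)|^2}{\wh\bfK_{N+1}^c(\zeta,\zeta)}.
\end{equation*}
The singular factors $|\zeta|^{2c}$ in numerator and denominator cancel, and the limit is determined by the leading behaviour of the orthonormal polynomials $p_{j,N+1}^c$ at the origin. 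With this interpretation the rank-one perturbation identity remains valid and the argument goes through verbatim.
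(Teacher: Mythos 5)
Your proof is correct and essentially reproduces the paper's argument: the Vandermonde/conditioning observation that fixing a particle at the origin in the $(N+1)$-particle ensemble with charge $c$ yields the $N$-particle ensemble with charge $c+1$, combined with the rank-one perturbation formula for the Palm kernel, is exactly what the paper implements through its explicit manipulation of correlation functions, namely $\wh{\bfR}_{N,1}^{c+1}(\zeta)=\frac{N}{N+1}\,\wh{\bfR}_{N+1,2}^{c}(\zeta,0)/\wh{\bfR}_{N+1,1}^{c}(0)$ followed by the $2\times 2$ determinant expansion. The one extra point you raise, interpreting $\wh B_{N+1}^c(0,z)$ as a regularised limit so that the $|\zeta|^{2c}$ singularity at the origin cancels between numerator and denominator, is a worthwhile clarification that the paper leaves implicit.
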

As an immediate consequence, by letting $N\to\infty$, we obtain \eqref{recursion_1 pt}.
Before the proof, let us present some examples. 

\begin{eg*} (\textit{Bulk case}) By \eqref{K R wh bulk} and \cite[Eq.(8.7.1)]{olver2010nist}, we have 
\begin{equation}
\frac{|\wh{K}^c_{\textup{bulk}}(0,z)|^2}{\wh{R}^c_{\textup{bulk}}(z)}=e^{-|z|^2} \frac{|z|^{2c}}{\Gamma(c+1)}.
\end{equation}
Then it follows from the recurrence relation of the regularised Gamma function (see \cite[Eq.(8.8.5)]{olver2010nist}) that 
\begin{equation}
\wh{R}^{c+1}_{\textup{bulk}}(z)=\wh{R}^{c}_{\textup{bulk}}(z)-\frac{|\wh{K}^c_{\textup{bulk}}(0,z)|^2}{\wh{R}^c_{\textup{bulk}}(z)}=P(c+1,|z|^2).
\end{equation}

\end{eg*}

\begin{eg*} (\textit{Edge case})
For $a=1$, by \eqref{K bGinibre}, we have
\begin{equation} \label{R wh edge a1}
\wh{R}^1_{\textup{edge}}(z)=\tfrac12 \erfc( -\tfrac{z+\bar{z}}{\sqrt{2}} )-\tfrac12 e^{-|z|^2} | \erfc( -\tfrac{z}{\sqrt{2}} ) |^2. 
\end{equation}
Similarly, we have
\begin{align}
\begin{split}
\wh{R}^2_{\textup{edge}}(z)&=\tfrac12 \erfc( -\tfrac{z+\bar{z}}{\sqrt{2}} )-\tfrac12 e^{-|z|^2} | \erfc( -\tfrac{z}{\sqrt{2}} ) |^2
-\tfrac{1}{\pi-2}e^{-|z|^2} \Big| (\sqrt{\tfrac{\pi}{2}} z- 1)\erfc( -\tfrac{z}{\sqrt{2}} )+e^{-z^2/2}  \Big|^2.
\end{split}
\end{align}
See Figure~\ref{Fig_Rc edge} below for the graphs of $\wh{R}^c_{\textup{edge}}$. 

	\begin{figure}[h!]
	\begin{center}
		\begin{subfigure}[h]{0.32\textwidth}
			\includegraphics[width=\textwidth]{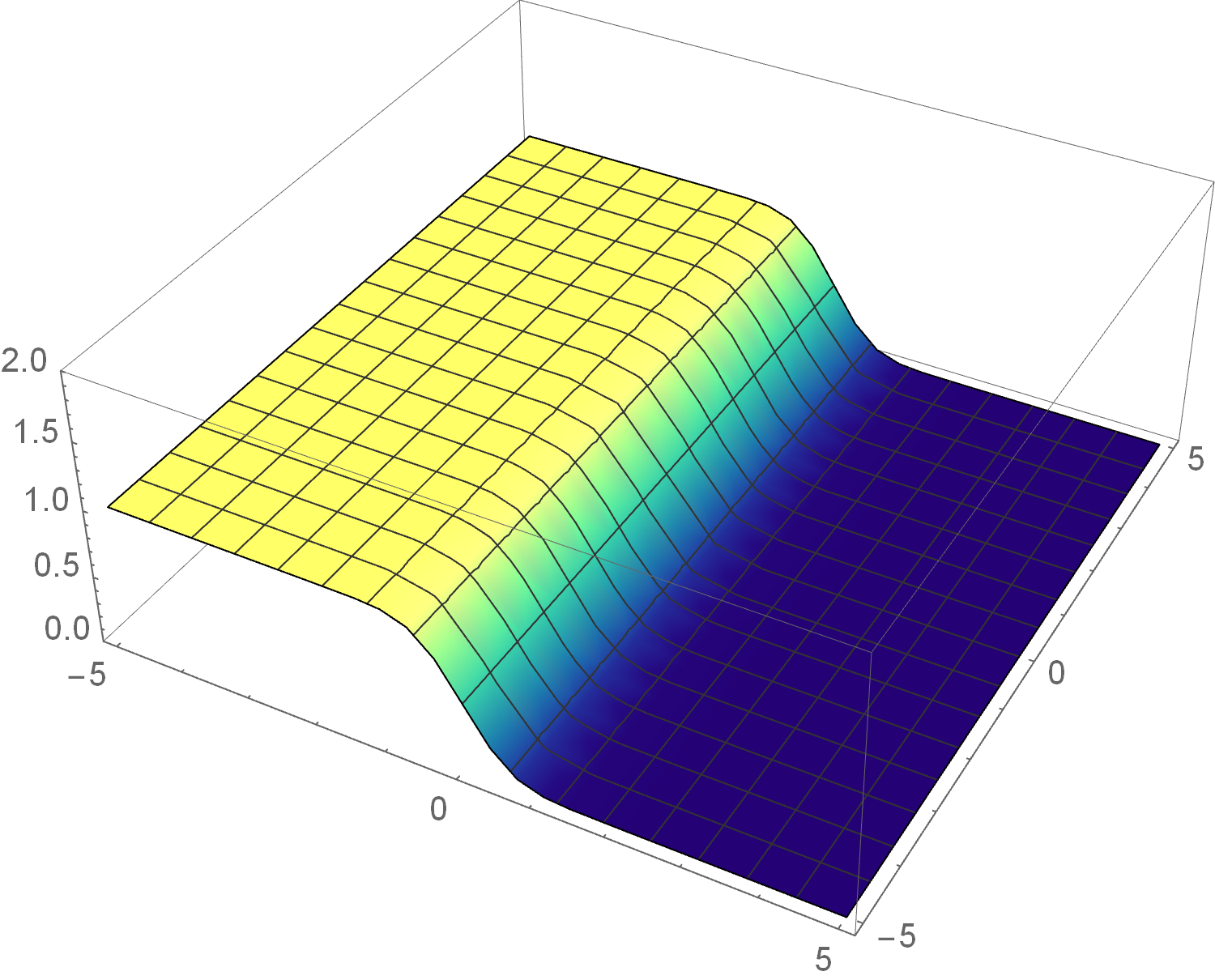}
			\caption{$c=0$}
		\end{subfigure} 
		\begin{subfigure}[h]{0.32\textwidth}
		\includegraphics[width=\textwidth]{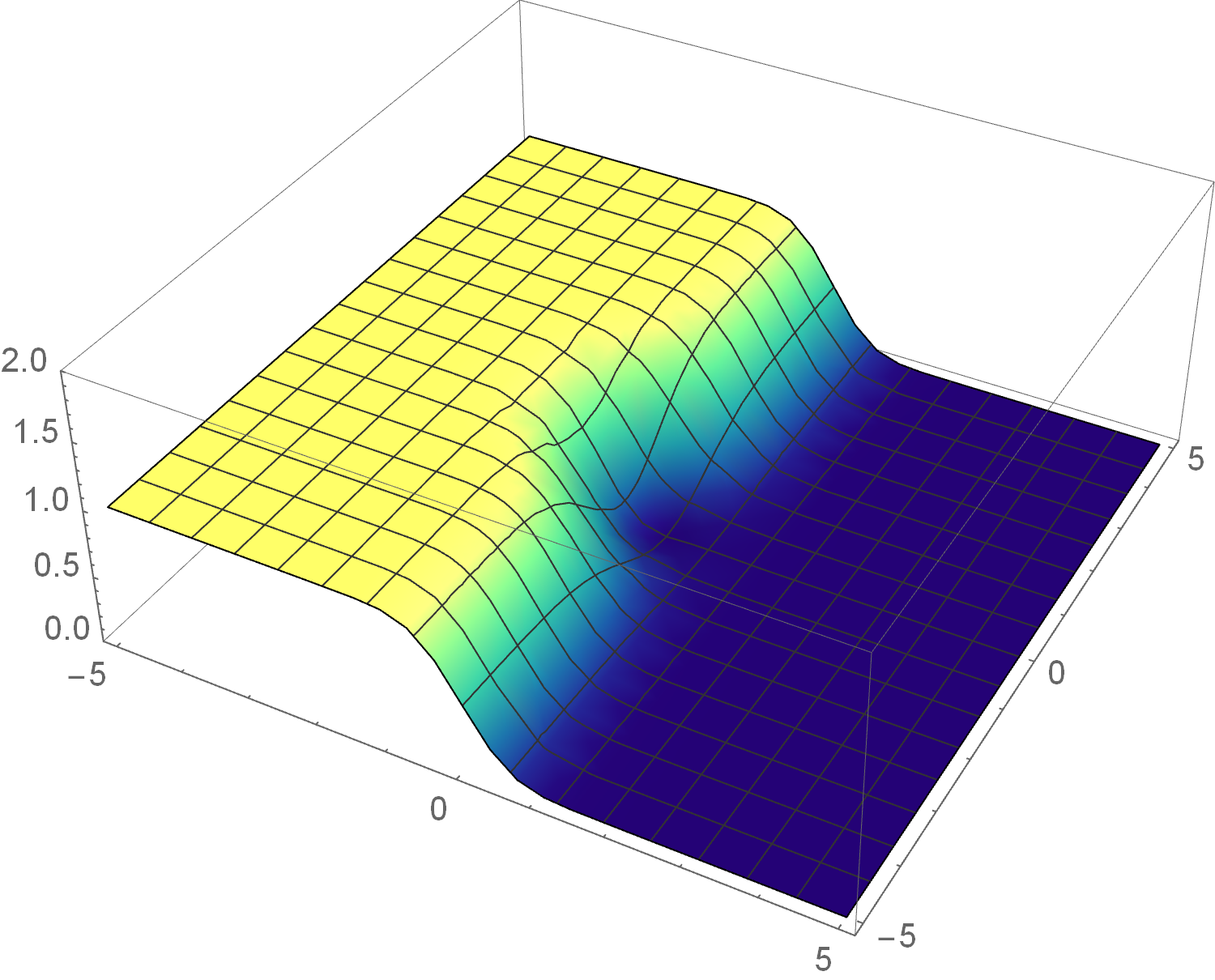}
		\caption{$c=1$}
	\end{subfigure} 
	\begin{subfigure}[h]{0.32\textwidth}
	\includegraphics[width=\textwidth]{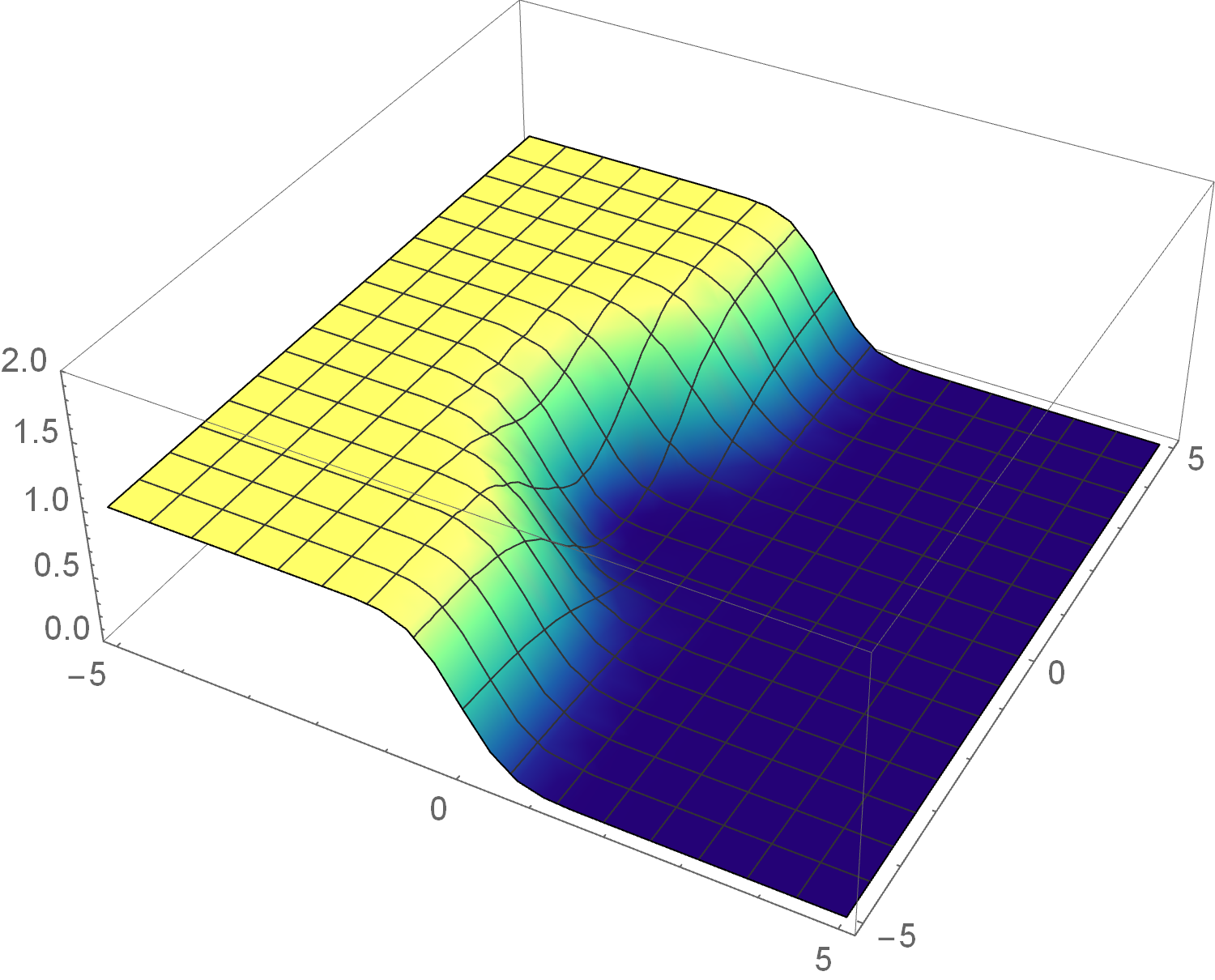}
	\caption{$c=2$}
\end{subfigure} 
	\end{center}
	\caption{The plots display the graphs of $\wh{R}^c_{\textup{edge}}$ for a few values of $c$. }\label{Fig_Rc edge}
\end{figure}

\end{eg*}

We now prove Lemma~\ref{Lem_recursion}. 

\begin{proof}[Proof of Lemma~\ref{Lem_recursion}]

Let us write 
\begin{align}\label{RNk}
\begin{split}
\wh{\bfR}_{N,k}^c(\wh{\zeta}_1,\ldots,\wh{\zeta}_k)&:=\tfrac{1}{\wh{Z}_N} \tfrac{N!}{(N-k)!}  \int_{ \C^{N-k} }  \prod_{j<k} |\wh{\zeta}_j-\wh{\zeta}_k|^2  e^{-N \sum_j Q_c(\wh{\zeta}_j)} \prod_{j=k+1}^N dA(\wh{\zeta}_{j}) 
\end{split}
\end{align}
for the $k$-point correlation (joint intensity) function. Recall that we have
\begin{equation} \label{RNk det}
\wh{\bfR}_{N,k}^c(\wh{\zeta}_1,\ldots,\wh{\zeta}_k)=\det\Big[ \wh{\bfK}_N(\wh{\zeta}_j,\wh{\zeta}_l) \Big]_{j,l=1}^k,
\end{equation}
see e.g. \cite{forrester2010log}.

The main idea of the proof is the following simple observation
\begin{align*}
 | \Delta(\wh{\zeta}_1,\cdots, \wh{\zeta}_N) |^2 \prod_{j=1}^N |\wh{\zeta}_j|^2
=  | \Delta(\wh{\zeta}_1,\cdots, \wh{\zeta}_N,0) |^2, 
\end{align*}
where $\Delta$ is the Vandermonde determinant.
Using this, we have 
\begin{align*}
\wh{\bfR}_{N,1}^{c+1}(\wh{\zeta}_1)&=\frac{1}{\wh{Z}_N} \frac{N!}{(N-1)!}\int_{ \C^{N-1} }  | \Delta(\wh{\zeta}_1,\cdots, \wh{\zeta}_N) |^2  e^{-N \sum_j Q_{c+1}(\wh{\zeta}_j)} \prod_{j=2}^N dA(\wh{\zeta}_{j})
\\
&=\frac{1}{\wh{Z}_N} \frac{1}{N+1}\frac{(N+1)!}{(N-1)!}\int_{ \C^{N-1} }  | \Delta(\wh{\zeta}_1,\cdots, \wh{\zeta}_N,0) |^2  e^{-N \sum_j Q_{c}(\wh{\zeta}_j)} \prod_{j=2}^N dA(\wh{\zeta}_{j})
\\
&=\frac{1}{N+1} \wh{\bfR}_{N+1,2}^{c}(\wh{\zeta}_1,0),
\end{align*}
which leads to 
$$
\wh{\bfR}_{N,1}^{c+1}(\zeta)=\frac{N}{N+1} \frac{\wh{\bfR}_{N+1,2}^{c}(\zeta,0)}{ \wh{\bfR}_{N+1,1}^{c}(0)}.
$$
Therefore we obtain 
\begin{align*}
\wh{R}_{N,1}^{c+1} (z)=\frac{1}{N} \wh{\bfR}_{N,1}^{c+1} ( \tfrac{z}{\sqrt{N}} )=\frac{1}{N+1} \frac{\wh{\bfR}_{N+1,2}^{c}( \frac{z}{\sqrt{N}},0)}{ \wh{\bfR}_{N+1,1}^{c}(0)}.
\end{align*}
Since 
$$
\wh{R}_{N+1,1}^c(0)= \frac{1}{N+1} \wh{\bfR}_{N+1,1}^{c}(0), \qquad   \wh{R}_{N+1,2}^c(0,z)= \frac{1}{(N+1)^2}\wh{\bfR}_{N+1,2}^{c}( \tfrac{z}{\sqrt{N}},0),
$$
we conclude 
$$
\wh{R}_{N,1}^{c+1} (z)= \frac{  \wh{R}_{N+1,2}^c(0,z) }{ \wh{R}_{N+1,1}^c(0) }= \wh{R}_{N+1,1}^c(z)-\wh{B}_{N+1}^c(0,z).
$$
This completes the proof. 
\end{proof}

\subsection{Multi-fold transformations} 
In this subsection, we show Proposition~\ref{Prop_multitrans}. 

We first note that by taking $N \to \infty$ of \eqref{K whK N rel}, we have   
\begin{equation} \label{K_multitrans}
	K^c(z,w)= d(z\bar{w})^{d-1} \sum_{l=0}^{d-1} \widehat{K}^{ \frac{c+l+1}{d}-1  } (z^d,w^d).
	\end{equation}

\begin{rmk}
	In the opposite direction, one can also express $\widehat{K}^c$ in terms of $K^c$. For instance when $d=2$, we have the relations  
	\begin{align}
		K^c(z,w)+K^c(z,-w)&=4z\bar{w} \, \wh{K}^{ \frac{c-1}{2} }(z^2,w^2),
		\\
		K^c(z,w)-K^c(z,-w)&=4z\bar{w} \, \wh{K}^{ \frac{c}{2} }(z^2,w^2).
	\end{align}
	Summing these two equations, we obtain \eqref{K_multitrans} with $d=2$:
	\begin{equation}
		K^c(z,w)=2z\bar{w} \Big( \wh{K}^{ \frac{c-1}{2} }(z^2,w^2)+\wh{K}^{ \frac{c}{2} }(z^2,w^2) \Big) .
	\end{equation}
	We remark that when $c\in \{0,1\}$, the term 
	$2z\bar{w}\, \wh{K}^{ 0 }(z^2,w^2)$ in the right-hand side of the above equation corresponds to the kernel appearing in the context of chiral Ginibre ensembles, see \cite[Theorem 3]{MR4229527}.
\end{rmk}

Before the proof, we interpret $K^c_{\textup{bulk}}$ for general $d>1$ from the viewpoint of Proposition~\ref{Prop_multitrans}. 

\begin{eg*} (\textit{Bulk case}) 
	It follows from the definition \eqref{E_{a,b}} that
	\begin{align*}
		\sum_{l=0}^{d-1} x^l\,E_{1,\frac{c+l+1}{d}}(x^d) &=	\sum_{l=0}^{d-1} \sum_{k=0}^\infty \frac{x^{dk+l}}{\Gamma(k+\frac{c+l+1}{d})}
		= \sum_{j=0}^{\infty} \frac{x^j}{ \Gamma( \frac{j}{d}+\frac{1+c}{d} ) }=	E_{\frac1d,\frac{1+c}{d}}(x).
	\end{align*}
	Then by \eqref{K_multitrans}, one can obtain $K^c_{\textup{bulk}}$ from $\wh{K}^c_{\textup{bulk}}$ as 
	\begin{align}
		\begin{split}
			K^c_{\textup{bulk}}(z,w)&= d(z\bar{w})^{d-1} \sum_{l=0}^{d-1} \widehat{K}_{\textup{bulk}}^{ \frac{c+l+1}{d}-1  } (z^d,w^d)
			\\
			&=d(z\bar{w})^{c} e^{-|z|^{2d}/2-|w|^{2d}/2}  \sum_{l=0}^{d-1} (z\bar{w})^{l} E_{ 1,\frac{c+l+1}{d} }( (z\bar{w})^d )
			\\
			&=d (z\bar{w})^ce^{ -|z|^{2d}/2-|w|^{2d}/2 } E_{\frac1d,\frac{1+c}{d}}(z\bar{w}).
		\end{split} 
	\end{align}
\end{eg*}

\begin{proof}[Proof of Proposition~\ref{Prop_multitrans}]

By the change of variable $\zeta \mapsto \zeta^d$, it is easy to observe that $p_{j,N}^c$ and $q_{j,N}^c$ enjoy the intimate relation
	\begin{equation} \label{ONP transform}
		q_{dj+l,dN}^{c}(\zeta)=\sqrt{d}\,\zeta^l p_{j,N}^{ \frac{c+l+1}{d}-1 }(\zeta^d).
	\end{equation}
    This property is also discussed in \cite[Section 3]{MR3849128} but it is easy enough to recall a proof.
	By definition, we have
	\begin{align*}
		\delta_{jk}&=\int_{ \C }  q^{c}_{j,dN}(\zeta) \overline{ q^{c}_{k,dN} (\zeta)  } |\zeta|^{2c} e^{-dN V(\zeta)}\,dA(\zeta) 
		=\int_{ \C }  p_{j,N}^c(\zeta) \overline{p_{k,N}^c (\zeta) } |\zeta|^{2c} e^{-N Q(\zeta)}\,dA(\zeta).
	\end{align*}
	Since $V_c(\zeta)$ is invariant under the discrete rotation $\zeta \mapsto e^{ 2\pi i/d  } \cdot \zeta$, there exists a polynomial $p_j$ such that $q_{dj+l,dN}^{c}(\zeta)=\zeta^l p_j(\zeta^d).$
	By the change of variable $\eta=\zeta^d$, we have 
	\begin{align*}
		\delta_{jk}
		&=d \int_{ 0<\arg \zeta < \frac{2\pi}{d} } p_j(\zeta^d) \overline{p_k(\zeta^d)} |\zeta|^{2c+2l} e^{-NQ(\zeta^d)}\,dA(\zeta)
	  =\frac{1}{d}   \int_{\C} p_j(\eta) \overline{p_k(\eta)} |\eta|^{ \frac{2c+2l+2}{d}-2 } e^{-NQ(\eta)} \,dA(\eta).
	\end{align*}
	Thus we obtain 
	$p_j(\zeta)=\sqrt{d}\, p_{j,N}^{ \frac{c+l+1}{d}-1 }(\zeta),$
	which leads to \eqref{ONP transform}.
	
	By \eqref{bfKN} and \eqref{bfwhKN} we have
	\begin{align}
		\bfK_{dN}^c(\zeta,\eta)&=(\zeta \bar{\eta})^ce^{ -\frac{dN}{2} V(\zeta)-\frac{dN}{2} V(\eta) } \sum_{j=0}^{dN-1} q_{j,dN}^{c}(\zeta) \overline{q_{j,dN}^{c}(\eta)},
		\\
		\widehat{\bfK}_{N}^c(\zeta,\eta)&=(\zeta \bar{\eta})^ce^{ -\frac{N}{2} Q(\zeta)-\frac{N}{2} Q(\eta) } \sum_{j=0}^{N-1} p_{j,N}^c(\zeta) \overline{p_{j,N}^c(\eta)}.
	\end{align}
	Observe here that by \eqref{ONP transform},
	\begin{align*}
		\sum_{j=0}^{dN-1} q_{j,dN}^{c}(\zeta) \overline{q_{j,dN}^{c}(\eta)} &=  \sum_{l=0}^{d-1} \sum_{j=0}^{N-1} q_{dj+l,dN}^{c}(\zeta) \overline{q_{dj+l,dN}^{c}(\eta)}
		=d \sum_{l=0}^{d-1} (\zeta \bar{\eta})^l \sum_{j=0}^{N-1} p_{j,N}^{ \frac{c+l+1}{d}-1 } (\zeta^d) \overline{p_{j,N}^{ \frac{c+l+1}{d}-1 } (\eta^d)}.
	\end{align*} 
	Thus we obtain 
	\begin{align*}
		\bfK_{dN}^c(\zeta,\eta)&=d \sum_{j=0}^{d-1} (\zeta \bar{\eta})^{c+l}  e^{ -\frac{N}{2} Q(\zeta^d)-\frac{N}{2} Q(\eta^d) } \sum_{j=0}^{N-1} p_{j,N}^{\frac{c+l+1}{d}-1 } (\zeta^d) \overline{p_{j,N}^{ \frac{c+l+1}{d}-1 } (\eta^d)}
		\\
		&=d(\zeta \bar{\eta})^{d-1} \sum_{l=0}^{d-1} \widehat{\bfK}_N^{ \frac{c+l+1}{d}-1 }(\zeta^d,\eta^d).
	\end{align*}
	Therefore we conclude 
	\begin{align*}
K_{dN}^c(z,w)&=\frac{1}{N^{1/d}} \bfK_N^c \Big( \frac{z}{N^{ \frac{1}{2d} }}, \frac{w}{N^{ \frac{1}{2d} }} \Big)
		\\
		&=d(z\bar{w})^{d-1} \frac{1}{N} \sum_{l=0}^{d-1} \widehat{\bfK}_N^{ \frac{c+l+1}{d}-1 } \Big( \frac{z^d}{\sqrt{N}}, \frac{w^d}{\sqrt{N}} \Big)
		= d(z\bar{w})^{d-1} \sum_{l=0}^{d-1} \widehat{K}_N^{ \frac{c+l+1}{d}-1  } (z^d,w^d),
	\end{align*}
	which completes the proof.	
\end{proof}

\section{Christoffel-Darboux identity for planar orthogonal polynomials}\label{Section_CDI}

This section is devoted to proving the Christoffel-Darboux identity, Theorem~\ref{Thm_CDI}, see \cite{MR1917675} for a similar method of deriving such identity in the context of bi-orthogonal polynomials. 
We also refer to \cite[Subsection 4.1]{akemann2021scaling} for a version of the Christoffel-Darboux which involves differential operators. 

\subsection{Elliptic potential revisited}\label{Subsec_CDI EG}

To better introduce the general strategy of deriving the Christoffel-Darboux identity for planar orthogonal polynomials, let us first consider the elliptic potential
\begin{equation}
Q(\zeta):=\tfrac{1}{1-\tau^2}(|\zeta|^2-\tau \,\re\zeta^2), \qquad \tau \in [0,1).
\end{equation}
The random normal matrix ensemble associated with such a potential is equivalent to the \emph{elliptic Ginibre ensemble}. 
It is well known that the orthogonal polynomial with respect to the measure $e^{-NQ}\,dA$ can be expressed in terms of the Hermite polynomial $H_j(x)=(-1)^j e^{x^2} \frac{d^j}{dx^j}e^{-x^2}$. 
More precisely, the monic orthogonal polynomial
\begin{equation}
	P_j(\zeta)=( \tfrac{\tau}{2N} )^{\frac{j}{2}} H_j( \sqrt{  \tfrac{N}{2\tau} }\zeta )
\end{equation}
satisfies the orthogonality relation
\begin{align}
	\int_{ \C }  P_j(\zeta) \overline{ P_k (\zeta) } e^{-N Q(\zeta)}\,dA(\zeta)= h_j \, \delta_{jk} , \qquad 	h_j=\sqrt{1-\tau^2} \tfrac{j!}{N^{j+1}},
\end{align}
see e.g. \cite{van1990new} or \cite[Lemma 7]{MR3845296}.

Let us write
\begin{equation}
	W(\zeta):=e^{ -\frac{\tau N}{2(1-\tau^2)} \zeta^2 }, \qquad \psi_j(\zeta):= W(\zeta) P_j(\zeta), \qquad \phi_j (\zeta):=W(\zeta)\, \frac{P_j(\zeta)}{h_j}.
\end{equation}
Then the associated correlation kernel $\wt{\bfK}_{N}$ is written as 
\begin{align}
\begin{split}
\wt{\bfK}_{N}(\zeta,\eta)&=e^{ -\frac{N}{1-\tau^2} \zeta \bar{\eta} } \sum_{j=0}^{N-1}  \overline{ \phi_j(\eta) } \psi_j(\zeta).
\end{split}
\end{align}
We have the following form of the Christoffel-Darboux identity.
\begin{prop}\label{Prop_CDI Hermite} 
We have 
\begin{equation} \label{CDI Hermite}
\bp_\eta \wt{\bfK}_{N}(\zeta,\eta)=\frac{N}{1-\tau^2} e^{ -\frac{N}{1-\tau^2} \zeta \bar{\eta} } \Big( \tau\, \overline{ \phi_{N}(\eta) } \psi_{N-1}(\zeta) - \overline{ \phi_{N-1}(\eta) } \psi_N(\zeta) \Big).
\end{equation}
\end{prop}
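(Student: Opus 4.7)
The plan is to compute $\bp_\eta\wt\bfK_N$ directly using the Hermite polynomial identities for $P_j$, and then recognise the resulting finite sum as telescoping to the boundary terms at $j = N-1$ and $j = N$, in the same spirit as the classical Christoffel--Darboux identity for Hermite polynomials.

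\smallskip
Starting from $H_j'(x) = 2jH_{j-1}(x)$ and $H_{j+1}(x) = 2xH_j(x) - 2jH_{j-1}(x)$, together with $P_j(\zeta) = (\tau/(2N))^{j/2}H_j\bigl(\sqrt{N/(2\tau)}\,\zeta\bigr)$ and the norm ratio $h_{j-1}/h_j = N/j$, I first extract three structural relations: the differential relation
\begin{equation*}
\bp_\eta\overline{\phi_j(\eta)} = \tfrac{\tau N\bar\eta}{1-\tau^2}\,\overline{\phi_j(\eta)} + N\,\overline{\phi_{j-1}(\eta)},
\end{equation*}
the three-term recurrence $\psi_{j+1}(\zeta) = \zeta\psi_j(\zeta) - (j\tau/N)\psi_{j-1}(\zeta)$, and its conjugate counterpart $(j+1)\overline{\phi_{j+1}(\eta)} = N\bigl[\bar\eta\overline{\phi_j(\eta)} - \tau\overline{\phi_{j-1}(\eta)}\bigr]$. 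Writing $S_N(\zeta,\eta) := \sum_{j=0}^{N-1}\overline{\phi_j(\eta)}\psi_j(\zeta)$ and applying the product rule to the factorisation $\wt\bfK_N = e^{-N\zeta\bar\eta/(1-\tau^2)}S_N$, the differentiation of the exponential prefactor combines with the $\bar\eta$-linear part of the differential relation to produce, up to the factor $N/(1-\tau^2)$, the combination $(\tau\bar\eta - \zeta)S_N$. Thus the proposition reduces to the polynomial identity
\begin{equation*}
(\tau\bar\eta - \zeta)\,S_N(\zeta,\eta) + (1-\tau^2)\!\sum_{j=1}^{N-1}\!\overline{\phi_{j-1}(\eta)}\,\psi_j(\zeta) = \tau\,\overline{\phi_N(\eta)}\,\psi_{N-1}(\zeta) - \overline{\phi_{N-1}(\eta)}\,\psi_N(\zeta).
\end{equation*}

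\smallskip
To establish this identity, I abbreviate $T_j := \overline{\phi_j(\eta)}\psi_j(\zeta)$, $A_j := \overline{\phi_j(\eta)}\psi_{j-1}(\zeta)$, and $B_j := \overline{\phi_{j-1}(\eta)}\psi_j(\zeta)$. Substituting the two three-term recurrences yields $\zeta T_j = B_{j+1} + (j\tau/N)A_j$ and $\bar\eta T_j = ((j+1)/N)A_{j+1} + \tau B_j$, and forming the pairing $\tau\bar\eta T_j - \zeta T_j$ gives
\begin{equation*}
(\tau\bar\eta - \zeta)\,T_j = \tfrac{\tau}{N}\bigl[(j+1)A_{j+1} - jA_j\bigr] + \tau^2 B_j - B_{j+1}.
\end{equation*}
Summing from $j=0$ to $N-1$: the $A$-part telescopes to $\tau A_N = \tau\,\overline{\phi_N}\psi_{N-1}$ (with $A_0 = 0$); the $B$-contributions $\tau^2\sum_{j=1}^{N-1}B_j - \sum_{k=1}^{N}B_k$ simplify to $-\overline{\phi_{N-1}}\psi_N - (1-\tau^2)\sum_{j=1}^{N-1}B_j$. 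Rearranging produces exactly the displayed identity, which completes the proof.

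\smallskip
The main obstacle is \emph{identifying} the correct linear combination $\tau\bar\eta - \zeta$: neither three-term recurrence alone produces a telescoping, and it is only this specific pairing---whose weight $\tau$ precisely matches the coefficient of $\overline{\phi_{j-1}(\eta)}$ in the conjugate recurrence for $\phi_j$---that forces the $A$-contributions to collapse to a single boundary term, after which the $B$-contributions fall out automatically. Once this pairing is spotted, the remainder is a routine index-shifting calculation.
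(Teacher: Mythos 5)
Your proof is correct, and the underlying mechanism is the same as the paper's, though presented in a more elementary, hands-on way. The paper packages the sum-rewriting into an operator identity: it introduces semi-infinite matrices $A$, $B$ with $\pa\Phi = A\Phi$ and $\zeta\Psi = B\Psi$, establishes $B = \tfrac{1-\tau^2}{N}A^*$ by integration by parts against the Gaussian weight, and then reads off the boundary terms from the commutator $\Phi^*[\Pi_N, B]\Psi$. If you unwind that commutator, you find exactly your polynomial identity $(\tau\bar\eta-\zeta)S_N + (1-\tau^2)\sum_{j=1}^{N-1}\overline{\phi_{j-1}}\psi_j = \tau\overline{\phi_N}\psi_{N-1} - \overline{\phi_{N-1}}\psi_N$; your $A_j$, $B_j$, $T_j$ telescoping is the concrete content of $[\Pi_N,B]$ killing everything except the cross-boundary matrix elements. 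Your derivation trades the integration-by-parts step for the explicit Hermite differentiation rule $P_j' = jP_{j-1}$, which is available here but not in the general setting of Section~3.2 — this is the main thing the paper's formulation buys: it is designed to extend beyond the Hermite case.

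One small remark worth noting: the paper writes $W(\zeta) = e^{-\frac{\tau N}{2(1-\tau^2)}\zeta^2}$, but for the biorthogonality $\langle\phi_l \,|\, \psi_k\rangle = \delta_{lk}$ (with the weight $e^{-\frac{N}{1-\tau^2}|\zeta|^2}$) to reproduce the orthogonality relation against $e^{-NQ}$ one needs $|W(\zeta)|^2 e^{-\frac{N}{1-\tau^2}|\zeta|^2} = e^{-NQ(\zeta)}$, which forces $W(\zeta) = e^{+\frac{\tau N}{2(1-\tau^2)}\zeta^2}$. Your differential relation $\bp_\eta\overline{\phi_j(\eta)} = \tfrac{\tau N\bar\eta}{1-\tau^2}\overline{\phi_j(\eta)} + N\overline{\phi_{j-1}(\eta)}$ uses the correct ($+$) sign, so your computation is consistent with the intended definitions and with the paper's IBP relation $B = \tfrac{1-\tau^2}{N}A^*$; with the printed minus sign the kernel factorisation and the IBP step would both break.
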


\begin{rmk}
Note in particular that the $1$-point function $\wt{\bfR}_{N}(\zeta)=\wt{\bfK}_{N}(\zeta,\zeta)$ satisfies 
\begin{align}\label{CDI Hermite R}
\begin{split}
\partial_x \wt{\bfR}_{N}(x+iy)=-2\frac{N^{N+1}}{(N-1)!}\frac{ e^{ -N Q(x+iy) } }{(1-\tau^2)(1+\tau)}  \Re\Big[   P_{N}(x-iy)  P_{N-1}(x+iy)  \Big] .
\end{split}
\end{align}
We refer to \cite[Proposition 2.3]{lee2016fine} for a direct proof of \eqref{CDI Hermite} using the three-term recurrence relation and differentiation rule of Hermite polynomials. (See also \cite[Lemma 4.1]{AB} for a related statement.) 
Together with Plancherel-Rotach type strong asymptotics of Hermite polynomials, the identities \eqref{CDI Hermite}, \eqref{CDI Hermite R} were used in \cite{lee2016fine,AB} to derive the associated limiting local kernels in various situations. Beyond the study of determinantal point processes, the identity \eqref{CDI Hermite R} was also utilized to analyse real eigenvalue distributions of real elliptic random matrices, see \cite{byun2021real}.  
\end{rmk}

\begin{proof}[Proof of Proposition~\ref{Prop_CDI Hermite}] 
Let us define semi-infinite dimensional vectors 
$$
\Psi:=[ \psi_0,\psi_1,\cdots ]^t, \qquad \Phi:=[ \phi_0,\phi_1,\cdots ]^t,
$$
where $t$ is the transpose of a matrix and write
\begin{equation} \label{projection op}
	\Pi_N:=\textup{diag}(\underbrace{1,\cdots,1}_N,0,\cdots )
\end{equation}
for the projection (truncation) operator. 
Then the kernel $\wt{\bfK}_{N}$ can be rewritten as 
\begin{align}
\begin{split}
\wt{\bfK}_{N}(\zeta,\eta)
=e^{ -\frac{N}{1-\tau^2} \zeta \bar{\eta} }  \Phi^*(\eta) \Pi_N \Psi(\zeta),
\end{split}
\end{align}
where $*$ denotes the Hermitian transpose of the matrix.
We also define
$$
\langle T | S \rangle := \int \overline{T}\,S\,e^{-\frac{N}{1-\tau^2}|\zeta|^2}\,dA(\zeta)=\overline{ \langle S| T \rangle  }.
$$

By definition, there exist semi-infinite dimensional matrices $A,B$ such that 
\begin{align}
	\pa \phi_j&=\sum_k A_{jk} \phi_k, \qquad \qquad \pa \Phi=A \Phi,
	\\
		\zeta \psi_j(\zeta)&=\sum_k B_{jk} \psi_k(\zeta), \qquad  \zeta \Psi(\zeta)=B \Psi. 
\end{align}
Notice here that integration by parts gives rise to
\begin{align*}
	B_{jk} &=\int_{\C} \overline{ \phi_k(\zeta) } e^{-\frac{N}{1-\tau^2}\zeta \bar{\zeta}} \zeta \psi_j(\zeta)\,dA(\zeta)
	=-\tfrac{1-\tau^2}{N} \int_{\C} \overline{ \phi_k(\zeta) } \bp \Big( e^{-\frac{N}{1-\tau^2}\zeta \bar{\zeta}} \Big) \psi_j(\zeta)\,dA(\zeta)
	\\
	&=\tfrac{1-\tau^2}{N}\int_{\C}  \bp \overline{ \phi_k(\zeta) } \psi_j(\zeta)\,e^{-\frac{N}{1-\tau^2}\zeta \bar{\zeta}}\,dA(\zeta)
=\tfrac{1-\tau^2}N \overline{ A_{kj} }.
\end{align*}
In other words, we have 
\begin{equation}
B=\frac{1-\tau^2}{N} A^*.
\end{equation}
Using this, we obtain
\begin{align}
	\begin{split}
		(\zeta-\tfrac{1-\tau^2}{N}\bp_\eta) 	 \Phi^*(\eta) \Pi_N \Psi(\zeta) &= \Phi^*(\eta)  \Pi_N B  \Psi(\zeta) - \tfrac{1-\tau^2}{N}\Phi^*(\eta) A^* \Pi_N  \Psi(\zeta)
		\\
		&=\Phi^*(\eta) (  \Pi_N B -  B \Pi_N ) \Psi(\zeta). 
	\end{split}
\end{align}
Thus we have 
\begin{align}
\begin{split}
\bp_\eta \bfK_{N}(\zeta,\eta)&=\bp_\eta \Big[e^{ -\frac{N}{1-\tau^2} \zeta \bar{\eta} }  \Phi^*(\eta) \Pi_N \Psi(\zeta) \Big]
\\
&= -\tfrac{N}{1-\tau^2} e^{ -\frac{N}{1-\tau^2} \zeta \bar{\eta} }  (\zeta-\tfrac{1-\tau^2}{N}\bp_\eta) 	 \Phi^*(\eta) \Pi_N \Psi(\zeta)
\\
&= -\tfrac{N}{1-\tau^2} e^{ -\frac{N}{1-\tau^2} \zeta \bar{\eta} } \Phi^*(\eta) (  \Pi_N B -  B \Pi_N ) \Psi(\zeta). 
\end{split}
\end{align}

Now let us determine the matrix $B$. It follows from the three-term recurrence relation of Hermite polynomials that 
\begin{equation}
	\zeta P_j(\zeta)= P_{j+1}(\zeta)+\tfrac{j\tau}{N} P_{j-1}(\zeta). 
\end{equation}
Thus we obtain 
\begin{equation}
B_{j,k}=
\begin{cases}
   1 &\text{if }k=j+1,
   \\
   \frac{j\tau}{N} &\text{if }k=j-1,
   \\
   0 &\text{otherwise}.
\end{cases}
\end{equation}
Using this, we conclude 
\begin{align*}
	 \Phi^*(\eta) (  \Pi_N B -  B \Pi_N ) \Psi(\zeta)&=\overline{ \phi_{N-1}(\eta) } B_{N-1,N} \psi_N(\zeta)-\overline{ \phi_{N}(\eta) } B_{N,N-1} \psi_{N-1}(\zeta)
	 \\
	 &=\overline{ \phi_{N-1}(\eta) } \psi_N(\zeta)-\tau\, \overline{ \phi_{N}(\eta) } \psi_{N-1}(\zeta).
\end{align*}
This completes the proof. 
\end{proof}

\subsection{Gaussian potential with an insertion of a point charge}\label{Subsec_CDI IG}

In this subsection, we derive the Chritoffel-Darboux identity (Theorem~\ref{Thm_CDI}) for the orthogonal polynomials associated with the potential of the form \eqref{Qc Vc}. 
The overall strategy is similar to the one presented in the previous subsection. However, it requires some modifications due to the lack of standard three-term recurrence relation. 
We now prove Theorem~\ref{Thm_CDI}. 

\begin{proof}[Proof of Theorem~\ref{Thm_CDI}] 

As in Subsection~\ref{Subsec_CDI EG}, we write $\Psi:=[ \psi_0,\psi_1,\cdots ]^t$, $\Phi:=[ \phi_0,\phi_1,\cdots ]^t$.

Using the projection operator $\Pi_N$ in \eqref{projection op}, we write
\begin{equation}
	\wt{\bfK}_{N}^c(\zeta,\eta)=e^{-N \zeta\bar{\eta}} \Phi^*(\eta) \Pi_n \Psi(\zeta).
\end{equation}
Then we have 
\begin{equation} \label{bfK bp eta}
\bp_\eta \wt{\bfK}_{N}^c(\zeta,\eta) = -N e^{ -N \zeta \bar{\eta} }  (\zeta-\tfrac{1}{N}\bp_\eta) 	 \Phi^*(\eta) \Pi_n \Psi(\zeta).
\end{equation}

For each $j$, let 
\begin{equation}
L_{j,j-1}=-
\frac{ \int \overline{\zeta\psi_j(\zeta)}\,\phi_0(\zeta)\,e^{-N|\zeta|^2}\,dA(\zeta)}{\int \overline{\zeta\psi_{j-1}(\zeta)}\,\phi_0(\zeta)\,e^{-N|\zeta|^2}\,dA(\zeta)}.
\end{equation}
Notice that the denominator does not vanish. 
Note also that 
$$	
\zeta ( \psi_j(\zeta)+L_{j,j-1} \psi_{j-1}(\zeta)  ) \,  \bot \, \phi_0
$$
with respect to the inner product 
\begin{equation}\label{inner product}
	\langle T | S \rangle := \int \overline{T}\,S\,e^{-N|\zeta|^2}\,dA(\zeta)=\overline{ \langle S| T \rangle  }
\end{equation}
The numbers $L_{j,j-1}$ are building blocks to define the lower diagonal matrix 
$$
L:=\begin{pmatrix}0&0&0&0&\dots\\
L_{2,1}&0&0&0&\dots\\
0&L_{3,2}&0&0&\dots\\
0&0&L_{4,3}&0&\dots\\
\vdots&\vdots&\vdots&\ddots&\vdots
\end{pmatrix}.
$$

Write 
\begin{equation}\label{def psitilde}
\wt{ \psi}_j :=\psi_j+L_{j,j-1} \psi_{j-1}. 
\end{equation}
Then if 
$$
\phi(\zeta)= (\text{polynomials of deg} \le j-2) \cdot (\zeta-a) \cdot (\zeta-a)^c, 
$$
we have
\begin{equation}
\langle \phi \, |  \, \zeta \wt{\psi}_j \rangle =\langle \pa \phi \, | \, \wt{\psi}_j \rangle=0.
\end{equation}
In other words, we have 
$$ 
\mathrm{span}\{ \phi_0, \phi_1,\cdots, \phi_{j-1}  \} \, \bot  \, \zeta \wt{\psi}_j,
$$
which leads to 
\begin{equation}\label{z psi1}
\zeta \wt{\psi}_j(\zeta) 
= \psi_{j+1}(\zeta)+B_{j,j} \, \psi_j(\zeta)
\end{equation}
for some $B_{j,j}$.
Thus we obtain 
\begin{equation} \label{B act}
 \zeta( I+L ) \Psi=B \, \Psi,  \qquad 
B:=\begin{pmatrix}
B_{1,1}&1&0&0&\dots\\
0&B_{2,2}&1&0&\dots\\
0&0&B_{3,3}&1&\dots\\
0 & 0 &0& B_{4,4}&\ddots \\
\vdots & \vdots & \vdots &\ddots&\ddots
\end{pmatrix}.
\end{equation}

Let us also write
\begin{equation} \label{Ujj+1}
 U_{j,j+1}=-\frac{h_{j+1}}{h_j}\frac{P_j(a)}{P_{j+1}(a)}, 
\end{equation} 
and define the upper diagonal matrix 
$$
U:=\begin{pmatrix}
0& U_{1,2}&0&0&\dots\\
0&0& U_{2,3} &0&\dots\\
0&0&0& U_{3,4}&\dots\\
0 & 0 & 0 &0&\ddots\\
\vdots & \vdots & \vdots &\vdots&\ddots
\end{pmatrix}.
$$
Then the function 
\begin{align}
\begin{split}
\label{whphi}
\wh{\phi}_j(\zeta)&:=\phi_j(\zeta)+U_{j,j+1} \phi_{j+1}(\zeta)
=(\text{polynomials of deg}\le j) \cdot (\zeta-a)\cdot (\zeta-a)^c
\end{split}
\end{align}
satisfies
\begin{equation} 
\langle \pa \wh{\phi}_j\, | \, \psi_k \rangle = \langle \wh{\phi}_j\,| \, \zeta \, \psi_k \rangle=0 \quad \text{if }k\le j-2.  
\end{equation}
Thus we have
\begin{equation}
	\pa \wh{\phi}_j=A_{j,j} \phi_j+A_{j,j-1} \phi_{j-1}
\end{equation}
for some $A_{j,k}$, equivalently, 
\begin{equation} \label{A act}
	\pa(I+U) \Phi=A\, \Phi, \qquad 
A:=\begin{pmatrix}
A_{1,1}&0&0&0&\dots\\
A_{2,1}&A_{2,2}&0&0&\dots\\
0&A_{3,2}&A_{3,3}&0&\dots\\
0&0&A_{4,3}&A_{4,4}&\dots\\
\vdots&\vdots&\vdots&\ddots&\ddots
\end{pmatrix}.
\end{equation}

We now determine $A_{j,j-1}$ and $A_{j,j}$. Note that integration by parts gives
\begin{align} \label{AB rel intbypart}
\begin{split}
	\overline{B} (I+U)^t&=\overline{B} \langle \Psi \, | \, \Phi^t \rangle (I+U)^t = \langle B \Psi \, | \, \Phi^t (I+U)^t \rangle 
	\\
	&=\langle  \zeta( I+L ) \Psi\,| \, \Phi^t(I+U)^t \rangle 
=\tfrac{1}{N}\langle ( I+L ) \Psi\,| \, \pa \Phi^t(I+U)^t \rangle  	\\
	&=\tfrac{1}{N}\langle ( I+L ) \Psi\,| \, \Phi^tA^t \rangle=\tfrac{1}{N}(I+\overline{L}) A^t.
\end{split}
\end{align}
Thus we obtain the relation
\begin{equation}\label{AB relation}
	\tfrac{1}{N}A(I+L^*)=(I+U)B^*, \qquad 	B=\tfrac{1}{N}(I+L) A^* (I+U^*)^{-1}.
\end{equation}
Comparing the terms involving $A_{j,j-1}$, one can observe that
\begin{equation} \label{Ajj-1}
A_{j,j-1}=N.
\end{equation}
To determine $A_{j,j}$, note that
\begin{align*}
\pa \wh{\phi}_j(\zeta)&= \pa \Big(  \phi_j+U_{j,j+1} \phi_{j+1} \Big)
=\frac{1}{h_j}\pa \Big( (\zeta-a)^cP_j \Big)+\frac{U_{j,j+1} }{h_{j+1}}\pa  \Big( (\zeta-a)^cP_{j+1} \Big)
\\
&=(\zeta-a)^{c-1} \frac{1}{h_j} \Big[ \Big( cP_j+(\zeta-a)P_j' \Big)-\frac{ P_j(a) }{ P_{j+1}(a) } \Big( cP_{j+1}+(\zeta-a)P_{j+1}' \Big) \Big]
\\
&=(\zeta-a)^{c} \frac{1}{h_j} \Big[ \frac{c}{\zeta-a} \Big(P_j-\frac{ P_j(a) }{ P_{j+1}(a) } P_{j+1} \Big) + P_j' -\frac{ P_j(a) }{ P_{j+1}(a) } P_{j+1}'  \Big].
\end{align*}
This gives 
$$
A_{j,j} P_j+N\frac{ h_j }{ h_{j-1} } P_{j-1}= \frac{c}{\zeta-a} \Big(P_j-\frac{ P_j(a) }{ P_{j+1}(a) } P_{j+1} \Big) + P_j' -\frac{ P_j(a) }{ P_{j+1}(a) } P_{j+1}' . 
$$
Comparing the coefficient of $\zeta^{j}$ term of this identity, we obtain 
\begin{equation}\label{Ajj}
 A_{j,j}=-  \frac{P_j(a)}{P_{j+1}(a)}  (c+j+1) .
\end{equation}
Notice in particular that $A_{j,k}$'s are real. 

Now let us consider the decomposition
\begin{equation}\label{A decomposition}
	A=N\, T_-+A_0,
\end{equation} 
where 
$$T_-:=\begin{pmatrix}0&0&0&0&\dots\\
1&0&0&0&\dots\\
0&1&0&0&\dots\\
0&0&1&0&\dots\\
\vdots&\vdots&\vdots&\ddots&\vdots
\end{pmatrix}, \qquad A_0:=\textup{diag}(A_{1,1},A_{2,2},\dots)
$$
are the translation and the diagonal part respectively. Write
\begin{equation}
A^*=NT_++A_0^*, \qquad T_+:=T_-^*.
\end{equation}

Note also that we have
\begin{align*}
&\quad (T_+-\zeta) \Psi=(T_+-(I+L)^{-1}B ) \Psi
=(T_+-(I+L)^{-1}\tfrac{1}{N}(I+L) A^* (I+U^*)^{-1} ) \Psi
\\
&=(T_+-\tfrac{1}{N}A^* (I+U^*)^{-1} ) \Psi=(T_+(I+U^*)-\tfrac{1}{N}A^* ) (I+U^*)^{-1} \Psi=(T_+U^*-\tfrac{1}{N}A_0^* ) (I+U^*)^{-1} \Psi,
\end{align*}
where the second and the fourth identity follow from \eqref{AB relation} and \eqref{A decomposition} respectively.

We pause here to observe that $(T_+U^*-\tfrac{1}{N}A_0^* )$ is invertible. 
Suppose that this is not the case. Then there exists some $k$ such that
$U^*_{k-1,k}-\tfrac{1}{N}A_{k-1,k-1}=0$. Consequently, we have $\psi_{k}(\zeta)=\zeta\psi_{k-1}(\zeta)$. 
This is a  contradiction due to the assumption $a \not=0$, see Remark~\ref{Rmk_a not0}. 
Therefore we have shown that $(T_+U^*-\tfrac{1}{N}A_0^* )$ is invertible. 

By letting 
\begin{equation}
	\wh{\Psi}:
= (T_+U^*-\tfrac{1}{N}A_0^*)^{-1}  (T_+-\zeta)\Psi,
\end{equation}
we have 
\begin{equation}
	(I+U^*) 	\wh{\Psi}=\Psi. 
\end{equation}

Note that 
\begin{align*}
	\begin{split}
	\wh{\Psi}&:=[ \wh{\psi}_0,\wh{\psi}_1,\cdots ]^t
	=(T_+U^*-\tfrac{1}{N}A_0^*)^{-1}  (T_+-\zeta)  [ \psi_0,\psi_1,\cdots ]^t
	\\
	&=\textup{diag}(  U^*_{1,2}-\tfrac{1}{N}A_{1,1},U^*_{2,3}-\tfrac{1}{N}A_{2,2},\cdots  )^{-1} [ \psi_1-\zeta \psi_0,\psi_2-\zeta \psi_1,\cdots ]^t.
\end{split}
\end{align*}
Thus we have 
\begin{equation} \label{psi hat}
\wh{\psi}_j=\frac{  \psi_{j+1}-\zeta \psi_j }{ U_{j,j+1}^*-\tfrac{1}{N} A_{j,j} }.
\end{equation}
Here the denominator again does not vanish due to \eqref{Ujj+1} and \eqref{Ajj}. 
Then by \eqref{B act} and \eqref{AB relation}, we have
\begin{align*}
(I+L) \zeta 	(I+U^*) 	\wh{\Psi}&=(I+L) \zeta  \Psi = B\Psi 
=B(I+U^*) \wh{\Psi} =\tfrac{1}{N}(I+L) A^* \wh{\Psi},
\end{align*}
which leads to 
\begin{equation} \label{UA Psihat}
  \zeta 	(I+U^*) 	\wh{\Psi}=\tfrac{1}{N}A^* \wh{\Psi}. 
\end{equation}

Combining \eqref{UA Psihat}, \eqref{A act} and \eqref{AB relation}, we obtain  
\begin{align*}
(\zeta-\tfrac{1}{N}\bp_\eta) 	 \Phi^*(\eta) \Pi_n \Psi(\zeta)
&=(\zeta-\tfrac{1}{N}\bp_\eta) 	 \Phi^*(\eta) \Pi_n (I+U^*)\wh{\Psi}(\zeta)
\\
&=\tfrac{1}{N} \Phi^*(\eta) [\Pi_n,A^*] \wh{\Psi}(\zeta)-\tfrac{1}{N}\bp_\eta \Phi^*(\eta)[\Pi_n,I+U^*] \wh{\Psi}(\zeta).
\end{align*}
Moreover by \eqref{Ajj-1} and \eqref{psi hat}, we have 
\begin{align*}
	\begin{split}
\Phi^*(\eta) [\Pi_n,A^*] \wh{\Psi}(\zeta)&=\overline{ \phi_{n-1}(\eta) } A_{n,n-1} \wh{\psi}_n(\zeta)-\overline{ \phi_{n}(\eta) } A_{n-1,n} \wh{\psi}_{n-1}(\zeta)
\\
&=\frac{N}{  U_{n,n+1}^*-\tfrac{1}{N} A_{n,n} } \overline{ \phi_{n-1}(\eta) }  (  \psi_{n+1}(\zeta)-\zeta \psi_n(\zeta)  ).
\end{split}
\end{align*}
Similarly, we obtain 
\begin{align*} 
	\begin{split}
	\bp_\eta \Phi^*(\eta)[\Pi_n,I+U^*] \wh{\Psi}(\zeta)
	&=	\bp_\eta \overline{ \phi_{n-1}(\eta) } U_{n,n-1}^* \wh{\psi}_n(\zeta)-	\bp_\eta \overline{ \phi_{n}(\eta) } U_{n-1,n}^* \wh{\psi}_{n-1}(\zeta)
		\\
		&=    -\frac{U_{n-1,n}^*}{ U_{n-1,n}^*-\tfrac{1}{N} A_{n-1,n-1} }	\bp_\eta \overline{ \phi_{n}(\eta) }( \psi_{n}(\zeta)-\zeta \psi_{n-1}(\zeta) ).
	\end{split}
\end{align*}
Combining all of the above identities with \eqref{bfK bp eta}, the proof is complete.
\end{proof}

To our purpose, let us define 
\begin{equation}
\wt{K}_N^c(z,w):=\frac{1}{N} \wt{\bfK}_{N}^c(\zeta,\eta), \qquad 
\begin{cases}
\zeta=a+\frac{z}{\sqrt{N}},
\\
\eta=a+\frac{w}{\sqrt{N}},
\end{cases}
\end{equation}
and write $\wt{R}_N^c(z):=\wt{K}_N^c(z,z).$ Notice that by \eqref{bfwhKN bfwtKN}, the function $\wt{R}_N^c$ is related to $\wh{R}_N^c$ as 
\begin{equation}\label{wtRN whRN}
\wh{R}_N^c(z)=\wt{R}_N^c(-z).
\end{equation}
As an immediate consequence of Theorem~\ref{Thm_CDI}, we have the following corollary.

\begin{cor}\label{Cor_CDI local}
We have 
\begin{equation} \label{CDI_local R}
\bp_z \wt{R}_{N}^c(z)=e^{-|z|^2} (\RN{1}_N^c(z)- \RN{2}_N^c(z) ),
\end{equation}
where 
\begin{align}
\begin{split} \label{IN c(z)}
\RN{1}_N^c(z)&=\frac{e^{ -a^2N -a\sqrt{N}(z+\bar{z}) } }{N\sqrt{N}} \frac{1}{ \tfrac{N+c}{N}h_{N-1}-h_{N}  }
 \overline{ \psi'_{N}(\zeta) }    \Big( \psi_{N}(\zeta)-\zeta \psi_{N-1}(\zeta) \Big),
\end{split}
\end{align}
and
\begin{align} 
\begin{split} \label{IIN c(z)}
\RN{2}_N^c(z)&=\frac{e^{ -a^2N -a\sqrt{N}(z+\bar{z}) }}{\sqrt{N}} \frac{P_{N+1}(a)}{P_N(a)} \frac{h_N/h_{N-1} }{ \tfrac{N+c+1}{N} h_N-h_{N+1}    } 
 \overline{ \psi_{N-1}(\zeta) }  \Big(  \psi_{N+1}(\zeta)-\zeta \psi_N(\zeta)  \Big).
\end{split}
\end{align}
Here, $\zeta=a+\frac{z}{\sqrt{N}}$.
\end{cor}

\section{Large-$N$ limit of the one-point function}\label{Section_largeN}

In this section, we shall prove Theorem~\ref{Thm_bInsertion}. Let us write $\wt{R}^c_{\textup{bulk}}$ for the large-$N$ limit of $\wt{R}_N^c$ when $a \in [0,1)$ is fixed. Similarly, we write $\wt{R}^c_{\textup{edge}}$ for the large-$N$ limit of $\wt{R}_N^c$ when $a$ is given by \eqref{a critical}. 

We consider the case that $a$ is given by \eqref{a critical} and prove Theorem~\ref{Thm_bInsertion}.
We need to compute the asymptotic behaviours of the right-hand side of \eqref{CDI_local R}. 
This consists of the terms involving orthogonal polynomials and norms which are presented in Lemmas~\ref{Lem_IN 12 1} and ~\ref{Lem_orthogonal norm diff} respectively. 
Combining these, we obtain Theorem~\ref{Thm_bInsertion}.

To derive Lemmas~\ref{Lem_IN 12 1} and ~\ref{Lem_orthogonal norm diff}, we shall use the Riemann-Hilbert analysis for $P_j$ by Bertola, Elias Rebelo and Grava \cite{MR3668632}. 
We postpone to Appendices~\ref{Section_OP fine asymp} and ~\ref{Section_OP norms fine asymp} the proofs of most of the lemmas used during the proof of
the main theorem.

Following \cite[Proposition 4.5]{MR3849128}, let
\begin{equation} \label{def of phi z0}
\phi(z)\equiv \phi(z;z_0)= \frac{z-1}{z_0}-\log z,  \qquad z_0=\frac{1}{a^2} \frac{k}{N}
\end{equation}
and 
\begin{equation} \label{def of A(z0)}
A(z_0)^2=2 \log z_0-2 \frac{z_0-1}{z_0}, \qquad 
\zeta(z)\equiv \zeta(z;z_0)=-A(z_0) + \sqrt{ 2 \phi(z;z_0)+A(z_0)^2 }.
\end{equation}
We also write 
\begin{equation} \label{def of SSr}
\SS_r:= 2\sqrt{N}\Big(  \sqrt{ \frac{N}{N+r} } a -1 \Big)= \SS-\frac{r}{\sqrt{N}}-\frac{r\SS}{2N}+O(N^{-\frac32}) .
\end{equation}
Recall that $\Psi$ is a solution to the Riemann-Hilbert problem in Section~\ref{Section_Intro} and that $H$ and $W=Z/U$ are given by \eqref{def of U Z H}. 
We shall use the fine asymptotic behaviour of the orthogonal polynomial $P_k.$

\begin{prop} \label{Thm_BEG op}
Let $z\in D$ and $k=N+r$ with $r$ fixed. Then as $N \to \infty$, we have
\begin{align}
 P_k(z)  & = z^{k} \Big( \frac{z}{z-a}\Big)^c  e^{\frac{k}{2} \phi(z/a) }   \Big(\sqrt{k} \, \zeta(z/a)\Big)^{ \frac{c}{2} } \Psi_{11}( \sqrt{k}\, \zeta(z/a); \sqrt{k}\,A(z_0) ) \Big( 1+ \frac{\mathfrak{p}_k(z)}{ k^{\frac12} } +O( \frac{1}{k^{1+\frac{c}{2}} } ) \Big) ,
\end{align} 
where 
\begin{equation}
\begin{split}
\mathfrak{p}_k(z) & = H(\SS_r)\Big(\frac{a}{z-a} -\frac{1}{\zeta(z/a)} \Big)
\\
&\quad + W(\SS_r)\Big(  \frac{ a }{ z-a  } \Big( \frac{z-a}{z} \Big)^{c}  \frac{1}{\zeta(z/a)^{ c  }}- \frac{1}{\zeta(z/a)}  \Big) \frac{  \Psi_{21}( \sqrt{k}\, \zeta(z/a); \sqrt{k}\,A(z_0) )  }{ \Psi_{11}( \sqrt{k}\, \zeta(z/a); \sqrt{k}\,A(z_0) )   }.
\end{split}
\end{equation}
Here, $\phi$, $A$ and $\SS_r$ are given by \eqref{def of phi z0}, \eqref{def of A(z0)} and \eqref{def of SSr}. 
\end{prop}  

The leading order asymptotic of Proposition~\ref{Thm_BEG op} is given in  \cite[Theorem 1.3]{MR3849128}.
Furthermore the authors presented a constructive way to derive the subleading correction terms albeit it requires long (but straightforward) computations.
We defer the detailed computations to Appendix~\ref{Section_OP fine asymp}. 

As a direct consequence of Proposition~\ref{Thm_BEG op}, we obtain the following. 
To lighten notations, we sometimes omit the argument and write for instance
\begin{equation}
\Psi_{11}\equiv \Psi_{11}(z;\SS), \qquad H=H(\SS), \qquad  \pa_s \Psi_{11}= \pa_s \Psi_{11}(z;s)|_{s=\SS}. 
\end{equation}

\begin{lem} \label{Lem_OP diff deri}
Let $z\in  D$. Then as $N \to \infty$, the following holds.
\begin{itemize}
    \item \textbf{(Asymptotics of the difference)} We have 
\begin{align*}
&\quad \psi_{N+1} \Big(a+\frac{z}{\sqrt{N}}\Big)-\Big(a+\frac{z}{\sqrt{N}}\Big)  \psi_{N} \Big(a+\frac{z}{\sqrt{N}}\Big)  = \Big(a+\frac{z}{\sqrt{N}}\Big)^{N+1+c}  e^{ \frac{z(z+2\SS)}{4} } z^{ \frac{c}{2} } \Psi_{11}(z;\SS) 
\\
&\times \Big[ -\Big( \frac{z}{2}+ \frac{ \pa_s \Psi_{11} }{ \Psi_{11}   } \Big) \frac{1}{\sqrt{N}} + \Big[  \mathfrak{F}(z)+\mathfrak{G}(z)+\mathfrak{H}(z)+ \Big( \frac{z}{2} \frac{ \pa_s \Psi_{11} }{ \Psi_{11}   } + \frac{z^2}{8} +
\frac12 \frac{ \pa_s^2 \Psi_{11} }{ \Psi_{11} } \Big)   \Big] \frac{1}{N}+O(N^{-\frac32})\Big]
\end{align*}
and
\begin{align*}
&\quad \psi_{N} \Big(a+\frac{z}{\sqrt{N}}\Big)-\Big(a+\frac{z}{\sqrt{N}}\Big)  \psi_{N-1} \Big(a+\frac{z}{\sqrt{N}}\Big)  = \Big(a+\frac{z}{\sqrt{N}}\Big)^{N+c}  e^{ \frac{z(z+2\SS)}{4} } z^{ \frac{c}{2} } \Psi_{11}(z;\SS) 
\\
&\times \Big[ -\Big( \frac{z}{2}+ \frac{ \pa_s \Psi_{11} }{ \Psi_{11}   } \Big) \frac{1}{\sqrt{N}} + \Big[  \mathfrak{F}(z)+\mathfrak{G}(z)+\mathfrak{H}(z)- \Big( \frac{z}{2} \frac{ \pa_s \Psi_{11} }{ \Psi_{11}   } + \frac{z^2}{8} +
\frac12 \frac{ \pa_s^2 \Psi_{11} }{ \Psi_{11} } \Big)   \Big] \frac{1}{N} +O(N^{-\frac32})\Big],
\end{align*}
where
\begin{align}
\begin{split} \label{def of mathfrak F}
\mathfrak{F}(z)&= c\frac{ z(2z+\SS) }{24} +\frac{z}{2} \Big[ \frac{z(2z+\SS)}{6} \frac{ \pa_z \Psi_{11}}{ \Psi_{11} }+ \frac{\SS^2}{12} \frac{ \pa_s \Psi_{11} }{ \Psi_{11}   } \Big]
\\
&\quad + \Big[ \frac{ z(4z^2+6\SS z+3\SS^2) }{24}+\frac{c}{2} \frac{2z+\SS}{6}\Big]  \frac{ \pa_s \Psi_{11} }{ \Psi_{11}   } +  \frac{ z^2(4z^2+6\SS z+3\SS^2) }{48}  \\
&\quad + \frac{ z(z+\SS)}{4} +\frac{c}{12} +  \frac{z}{12}  \frac{ \pa_z^2 \Psi_{11} }{ \Psi_{11} }+ \frac{1}{12}  \frac{ \pa_s^2 \Psi_{11} }{ \Psi_{11}  }+\frac{z(2z+\SS)}{6}  \frac{ \pa_z \pa_s \Psi_{11} }{   \Psi_{11}    },
\end{split}
\end{align}
\begin{equation}
 \label{def of mathfrak G}
\mathfrak{G}(z) = -\frac{\SS-z}{3z}   \Big( H'+W'\frac{ \Psi_{21} }{ \Psi_{11} } \Big) -  \frac{H }{3z} +W  \frac{ \Psi_{21} }{ \Psi_{11} } \frac{c-1}{3z} + W\frac{ \Psi_{21} }{ \Psi_{11} } \frac{\SS-z}{3z}   \Big(  \frac{ \pa_s \Psi_{11} }{ \Psi_{11} }-\frac{ \pa_s \Psi_{21} }{ \Psi_{21} } \Big), 
\end{equation}
and 
\begin{equation} \label{def of mathfrak H}
\mathfrak{H}(z) = -\Big( \frac{z}{2}+ \frac{ \pa_s \Psi_{11} }{ \Psi_{11}   } \Big) \frac{\SS-z}{3z} \Big( H+W\frac{ \Psi_{21} }{ \Psi_{11} } \Big).
\end{equation}
 \item \textbf{(Asymptotics of the derivative)} We have 
\begin{align*}
 \frac{1}{N} \psi_N'\Big( a+\frac{z}{\sqrt{N}} \Big) & =  \Big(a+\frac{z}{\sqrt{N}}\Big)^{N+c-1}  e^{ \frac{z(z+2\SS)}{4} } z^{ \frac{c}{2} } \Psi_{11}(z;\SS)
\\
&\quad \times \Big[ 1+ \Big( \frac{z+\SS}{2}+\frac{c}{2}\frac{1}{z}+\frac{ \pa_z \Psi_{11} }{ \Psi_{11} }+F_N^{(1)}(z)+G_N^{(1)} \Big) \frac{1}{\sqrt{N}}+O( \frac{1}{N^{ 1+\frac{c}{2} }} )  \Big],
\end{align*}
where 
\begin{equation}
\label{Fk (1)}
F_k^{(1)}(z) = - \Big( \frac{ z(4z^2+6\SS z+3\SS^2) }{24}+\frac{r}{2}z\Big) -\frac{c}{2} \frac{2z+\SS}{6}+ \Big[ -\frac{z(2z+\SS)}{6} \frac{ \pa_z \Psi_{11}}{ \Psi_{11} }-\Big( \frac{\SS^2}{12}+r\Big) \frac{ \pa_s \Psi_{11} }{ \Psi_{11}   } \Big] 
\end{equation}
and 
\begin{equation} \label{Gk (1)}
G_k^{(1)}(z)= \frac{\SS-z}{3z} \Big( H+W\frac{ \Psi_{21} }{ \Psi_{11} } \Big). 
\end{equation}
\end{itemize}
\end{lem}

\begin{lem}[\textbf{Asymptotics of the terms involving orthogonal polynomials}] \label{Lem_IN 12 1}
Let $z\in  D$. Then as $N \to \infty$, we have
\begin{align}
\begin{split}
&\quad	\frac{ 1 }{N}  \overline{ \psi'_{N}\Big(a+\frac{z}{\sqrt{N}}\Big) }  \bigg[ \psi_{N}\Big(a+\frac{z}{\sqrt{N}}\Big)-\Big(a+\frac{z}{\sqrt{N}}\Big) \psi_{N-1}\Big(a+\frac{z}{\sqrt{N}}\Big) \bigg]  
\\
&=  e^{ (\SS+z+\bar{z})\sqrt{N} -\frac{z^2+\bar{z}^2}{4}-\frac{\SS^2}{4} }  |z|^c |\Psi_{11}(z;\SS)|^2 
\\
&\times \Big[  -\Big( \frac{z}{2}+ \frac{ \pa_s \Psi_{11} }{ \Psi_{11}   } \Big) \frac{1}{\sqrt{N}}+ \frac{ \mathfrak{I}(z) }{N} +O(N^{-\frac32-\frac{c}{2}}) \Big]
\Big[ 1+ \mathfrak{C}(z) \frac{1}{\sqrt{N}}+O(\frac{1}{N})\Big]
\end{split}
\end{align}
and 
\begin{align}
\begin{split}
&\quad 	\overline{ \psi_{N-1}\Big(a+\frac{z}{\sqrt{N}}\Big) }  \bigg[   \psi_{N+1}\Big(a+\frac{z}{\sqrt{N}}\Big)-\Big(a+\frac{z}{\sqrt{N}}\Big) \psi_N \Big(a+\frac{z}{\sqrt{N}}\Big) \bigg] 
\\
&=  e^{ (\SS+z+\bar{z})\sqrt{N} -\frac{z^2+\bar{z}^2}{4}-\frac{\SS^2}{4} }  |z|^c |\Psi_{11}(z;\SS)|^2 
\\
&\quad \times \Big[  -\Big( \frac{z}{2}+ \frac{ \pa_s \Psi_{11} }{ \Psi_{11}   } \Big) \frac{1}{\sqrt{N}} + \frac{ \mathfrak{II}(z) }{ N }+O(N^{-\frac32-\frac{c}{2}}) \Big]\Big[ 1+ \mathfrak{C}(z) \frac{1}{\sqrt{N}}+O(\frac{1}{N})\Big],
\end{split}
\end{align}
where 
\begin{align}
\begin{split} \label{mathfrak I}
 \mathfrak{I}(z) &= -\Big( \frac{z}{2}+ \frac{ \pa_s \Psi_{11} }{ \Psi_{11}   } \Big) \Big( \overline{  \frac{z+\SS}{2}+\frac{c}{2}\frac{1}{z}+\frac{ \pa_z \Psi_{11} }{ \Psi_{11} }+F_N^{(1)}(z)+G_N^{(1)} } \Big)
 \\
 &\quad + \mathfrak{F}(z)+\mathfrak{G}(z)+\mathfrak{H}(z)-  \Big( \frac{z}{2} \frac{ \pa_s \Psi_{11} }{ \Psi_{11}   } + \frac{z^2}{8} +
\frac12 \frac{ \pa_s^2 \Psi_{11} }{ \Psi_{11} } \Big), 
\end{split}
\end{align}
\begin{align}
\begin{split} \label{mathfrak II}
\mathfrak{II}(z) &= -\Big( \frac{z}{2}+ \frac{ \pa_s \Psi_{11} }{ \Psi_{11}   } \Big) \Big( \frac{2z+\SS}{2}+\overline{ F_{N-1}^{(1)}(z)+G_{N-1}^{(1)}(z)}  \Big)
\\
&\quad +\mathfrak{F}(z)+\mathfrak{G}(z)+\mathfrak{H}(z)+ \Big( \frac{z}{2} \frac{ \pa_s \Psi_{11} }{ \Psi_{11}   } + \frac{z^2}{8} +
\frac12 \frac{ \pa_s^2 \Psi_{11} }{ \Psi_{11} } \Big) , 
\end{split}
\end{align}
and 
\begin{equation}
\label{def of mathfrack C}
\mathfrak{C}(z) =\frac{1}{24} \Big( (\SS+2z)(12c+(\SS+2z)^2) + (\SS+2\bar{z})(12(c-1)+(\SS+2\bar{z})^2)\Big).
\end{equation}
Here $\mathfrak{F}, \mathfrak{G}$, $\mathfrak{H}$ are given by \eqref{def of mathfrak F}, \eqref{def of mathfrak G}, \eqref{def of mathfrak H} and $F_k^{(1)}, G_k^{(1)}$ are given by \eqref{Fk (1)}, \eqref{Gk (1)}. 
\end{lem}
\begin{proof}
This directly follows from Lemma~\ref{Lem_OP diff deri}. 
Note that the term \eqref{def of mathfrack C} originates from  
\begin{equation}
\begin{split}
&\quad \overline{ \Big(a+\frac{z}{\sqrt{N}}\Big)^{N+c-1} }  \Big(a+\frac{z}{\sqrt{N}}\Big)^{N+c} = e^{ (\SS+z+\bar{z})\sqrt{N}-\frac18 (\SS+2z)^2-\frac18 (\SS+2\bar{z})^2  }
\\
&\times \Big[ 1+ \frac{1}{24} \Big( (\SS+2z)(12c+(\SS+2z)^2) + (\SS+2\bar{z})(12(c-1)+(\SS+2\bar{z})^2)\Big) \frac{1}{\sqrt{N}}+O(\frac{1}{N})\Big].
\end{split}
\end{equation}
\end{proof}

\begin{lem}[\textbf{Asymptotics of the terms involving orthogonal norms}] \label{Lem_orthogonal norm diff}
As $N \to \infty$, we have
\begin{equation}
 \frac{e^{ -a^2N +\SS \sqrt{N} } }{ \sqrt{N}} \frac{1}{ \tfrac{N+c}{N}h_{N-1}-h_{N}  }= \frac{ e^{-\SS^2/4} }{ \sqrt{2\pi} }  \AAA \Big(  N-\AAA \BBB_0\sqrt{N}+o(\sqrt{N}) \Big) 
\end{equation}
and 
\begin{equation}
\frac{e^{ -a^2N +\SS \sqrt{N} } }{ \sqrt{N}}  \frac{h_N/h_{N-1} }{ \tfrac{N+c+1}{N} h_N-h_{N+1}    } =\frac{ e^{-\SS^2/4} }{ \sqrt{2\pi} }  \AAA\Big( N-\AAA(\BBB_0+\BBB)\sqrt{N}+o(\sqrt{N})  \Big) 
\end{equation}
where 
\begin{equation}
\BBB_0= \frac32 \frac{W'(\SS)}{W(\SS)} -\SS \Big( \frac{W'(\SS)}{W(\SS)} \Big)^2 - \Big( \frac{W'(\SS)}{W(\SS)} \Big)^3+\SS \frac{W''(\SS)}{W(\SS)}. 
\end{equation}
Here $\AAA$ and $\BBB$ are given by \eqref{def of AAA} and \eqref{def of BBB}. 
\end{lem}

\begin{prop} \label{Prop_asymptotic derivative} 
As $N \to \infty$, we have
\begin{equation}
\begin{split}
	\bp_z \wt{R}_{N}^c(z) & =  \frac{ \AAA }{ \sqrt{2\pi} }\,e^{ -|z|^2-\frac{ (z+\SS)^2+(\bar{z}+\SS)^2 }{4} } \,|z|^c  \,\Psi_{21}(z;\SS) \,W
\\
&\times \Big[    \overline{\Psi_{11}(z;\SS)} \Big( \frac{Z}{\overline z}-z -\AAA \BBB + \frac{Z+c}{Y} -  Y \frac{Z}{Z+c}\Big)-\frac{W}{\overline z}\frac{Z+c}{Y} \overline{ \Psi_{21}(z;\SS)  }   +Z +o(1)\Big].
\end{split}
\end{equation}
\end{prop}

\begin{proof}
Recall that $\RN{1}_N^c$ and $\RN{2}_N^c$ are given by \eqref{IN c(z)} and \eqref{IIN c(z)}. 
Note that by \eqref{psi k asymp local}, we have
\begin{equation}
\begin{split}
\frac{P_{N+1}(a)}{P_N(a)} &= a \,\Big( 1 -\Big(  \frac{ \pa_s \Psi_{11} }{ \Psi_{11}   } \Big)\frac{ 1}{ \sqrt{N} } +O( \frac{1}{N^{1+\frac{c}{2}}} ) \Big) = 1 +\Big( \frac{\SS}{2}-\frac{ \pa_s \Psi_{11} }{ \Psi_{11}   } \Big)\frac{ 1}{ \sqrt{N} } +O( \frac{1}{N^{1+\frac{c}{2}}} )  . 
\end{split}
\end{equation}
Combining this with Lemmas~\ref{Lem_IN 12 1} and \ref{Lem_orthogonal norm diff}, after simplifications, we obtain  
\begin{align*}
	\RN{1}_N^c(z)& =-\sqrt{N} \, \frac{ e^{ -\frac{ (z+\SS)^2+(\bar{z}+\SS)^2 }{4} } }{ \sqrt{2\pi} } \,|z|^c |\Psi_{11}(z;\SS)|^2  \Big( \frac{z}{2}+ \frac{ \pa_s \Psi_{11} }{ \Psi_{11}   } \Big)   \AAA  
\\
&\quad \times \Big[ 1+ \Big( -\AAA \BBB-\Big( \frac{z}{2}+ \frac{ \pa_s \Psi_{11} }{ \Psi_{11}   } \Big) ^{-1} \mathfrak{I}(z)+\mathfrak{C}(z)  \Big)\frac{1}{\sqrt{N}}+O( \frac{1}{N} ) \Big]
\end{align*}
and 
\begin{align*}
&\quad	\RN{2}_N^c(z)=-\sqrt{N} \, \frac{ e^{ -\frac{ (z+\SS)^2+(\bar{z}+\SS)^2 }{4} } }{ \sqrt{2\pi} } \,|z|^c |\Psi_{11}(z;\SS)|^2  \Big( \frac{z}{2}+ \frac{ \pa_s \Psi_{11} }{ \Psi_{11}   } \Big)   \AAA 
\\
&\times \Big[ 1+ \Big( -\AAA (\BBB_0+\BBB)-\Big( \frac{z}{2}+ \frac{ \pa_s \Psi_{11} }{ \Psi_{11}   } \Big) ^{-1} \mathfrak{II}(z)+\mathfrak{C}(z) + \frac{\SS}{2}- \frac{ \pa_s \Psi_{11}(0,\SS) }{ \Psi_{11}(0,\SS)   }  \Big)\frac{1}{\sqrt{N}}+O( \frac{1}{N}  ) \Big].
\end{align*}
This gives 
\begin{equation}
\begin{split}
&\quad \RN{1}_N^c(z)-\RN{2}_N^c(z) =  \frac{ e^{ -\frac{ (z+\SS)^2+(\bar{z}+\SS)^2 }{4} } }{ \sqrt{2\pi} } \,|z|^c |\Psi_{11}(z;\SS)|^2    \AAA
\\
&\times \Big[  \mathfrak{I}(z)-\mathfrak{II}(z)+\Big( \frac{z}{2}+ \frac{ \pa_s \Psi_{11} }{ \Psi_{11}   } \Big) \Big( -\AAA \BBB + \frac{\SS}{2}- \frac{ \pa_s \Psi_{11}(0,s)|_{s=\SS} }{ \Psi_{11}(0,\SS)   } \Big)  +o(1)\Big]. 
\end{split}
\end{equation}

We now compute $ \mathfrak{I}(z)-\mathfrak{II}(z)$. 
By \eqref{mathfrak I} and \eqref{mathfrak II}, we have 
\begin{align}
\mathfrak{I}(z)-\mathfrak{II}(z) &= -\Big( \frac{z}{2}+ \frac{ \pa_s \Psi_{11} }{ \Psi_{11}   } \Big) \Big( -z+\overline{  \frac{c}{2}\frac{1}{z}+\frac{ \pa_z \Psi_{11} }{ \Psi_{11} }-\frac{ \pa_s \Psi_{11} }{ \Psi_{11}   }   }  \Big)  - \Big( z \frac{ \pa_s \Psi_{11} }{ \Psi_{11}   } + \frac{z^2}{4}+\frac{ \pa_s^2 \Psi_{11} }{ \Psi_{11} } \Big) . 
\end{align}
By the Lax pair given in \cite[Eq.(2.1)]{MR3849128}, we have
\begin{equation} \label{Psi 11 pa z}
\frac{\pa_z\Psi_{11}(z;s)}{\Psi_{11}(z;s)}=-\frac{z+s}{2}-\frac{\frac{c}{2}+Z}{z}+W\Big(1+\frac{1}{z}\frac{c+Z}{Y}\Big)\frac{\Psi_{21}(z;s)}{\Psi_{11}(z;s)}.
\end{equation}
Also we have
\begin{align}
 \label{Psi 11 pa s}
\frac{\pa_s\Psi_{11}(z;s)}{\Psi_{11}(z;s)}& =-\frac{z}{2}+W\frac{\Psi_{21}(z;s)}{\Psi_{11}(z;s)},
\\
\label{Psi 11 pa s s}
\frac{ \pa_s^2 \Psi_{11}(z;s) }{ \Psi_{11}(z;s) } & = \frac{z^2}{4}-Z(s)+ W'(s) \frac{ \Psi_{21}(z;s) }{ \Psi_{11}(z;s) }.
\end{align}
Combining \eqref{Psi 11 pa z}, \eqref{Psi 11 pa s} and \eqref{Psi 11 pa s s}, after some computations, we obtain 
\begin{equation}
\mathfrak{I}(z)-\mathfrak{II}(z) = W\frac{\Psi_{21}}{\Psi_{11}} \Big( \frac{\SS}{2}+\frac{Z}{\overline z}-z-\frac{W'}{W} \Big)-\frac{W^2}{\overline z}\frac{c+Z}{Y} \Big| \frac{\Psi_{21}}{\Psi_{11}} \Big|^2   +Z, 
\end{equation}
which leads to 
\begin{equation}
\begin{split}
&\quad \RN{1}_N^c(z)-\RN{2}_N^c(z) =  \frac{ e^{ -\frac{ (z+\SS)^2+(\bar{z}+\SS)^2 }{4} } }{ \sqrt{2\pi} } \,|z|^c |\Psi_{11}(z;\SS)|^2    \AAA
\\
&\times \Big[  W\frac{\Psi_{21}}{\Psi_{11}} \Big( \SS+\frac{Z}{\overline z}-z-\frac{W'}{W}-\AAA \BBB - \frac{ \pa_s \Psi_{11}(0,s)|_{s=\SS} }{ \Psi_{11}(0,\SS)   } \Big)-\frac{W^2}{\overline z}\frac{c+Z}{Y} \Big| \frac{\Psi_{21}}{\Psi_{11}} \Big|^2   +Z +o(1)\Big].
\end{split}
\end{equation}
Recall that $W$ is given by \eqref{def of W}. 
Using \eqref{U' Z' Y'} and \eqref{def of U Z H}, it is straightforward to check 
\begin{gather}
\frac{W'}{W} =s-\frac{Z+c}{Y}, \qquad  \frac{W''}{W} = 1-c+s^2-2Z-s \frac{Z+c}{Y}, \label{W' W''}
\\
\frac{W'''}{W} = s (3-2c-4Z+s^2)-2YZ + \frac{ c^2-c(2+s^2-5Z)+Z(4Z-s^2-2) }{Y}. \label{W'''}
\end{gather}
Now it follows from \eqref{CDI_local R}, \eqref{W' W''} and 
\begin{equation} \label{Psi pa s 0}
\frac{ \pa_s \Psi_{11}(0,s)|_{s=\SS} }{ \Psi_{11}(0,\SS)   } = W(\SS) \frac{  \Psi_{21}(0,\SS) }{ \Psi_{11}(0,\SS)   } = Y \frac{Z}{Z+c}
\end{equation}
that 
\begin{equation} 
\begin{split}
&\quad 	\bp_z \wt{R}_{N}^c(z) =  \frac{ \AAA }{ \sqrt{2\pi} }\,e^{ -|z|^2-\frac{ (z+\SS)^2+(\bar{z}+\SS)^2 }{4} } \,|z|^c |\Psi_{11}(z;\SS)|^2  
\\
&\times \Big[   W\frac{\Psi_{21}(z;\SS)}{\Psi_{11}(z;\SS)} \Big( \frac{Z}{\overline z}-z -\AAA \BBB + \frac{Z+c}{Y} -  Y \frac{Z}{Z+c}\Big)-\frac{W^2}{\overline z}\frac{Z+c}{Y} \Big| \frac{\Psi_{21}(z;\SS)}{\Psi_{11}(z;\SS)} \Big|^2   +Z +o(1)\Big].
\end{split}
\end{equation}
\end{proof}

\begin{proof}[Proof of Theorem~\ref{Thm_bInsertion}]
By \cite[Theorem 5]{MR4030288}, for any $y \in \R$, we have
\begin{equation}
\wt{R}_{\textup{edge}}^c(x+iy) \to 
\begin{cases}
0 & \text{as }	x \to +\infty,,
\\
1 & \text{as }x \to -\infty,
\end{cases} \qquad (x\in \R). 
\end{equation}
Using Proposition~\ref{Prop_asymptotic derivative}, the first limit gives 
\begin{align}
\begin{split}
&\quad \wt{R}_{\textup{edge}}^c(z) =  \frac{ \AAA }{ \sqrt{2\pi} } \,e^{ -\frac{ (z+\SS)^2 }{4} } \,z^{ \frac{c}{2} }  \,\Psi_{21}(z;\SS) \,W 
\\
&\times \int_{+\infty}^{\bar{z}}   e^{ -z w-\frac{ (w+\SS)^2 }{4} } w^{ \frac{c}{2} }   \Big[   \Psi_{11}(w;\SS) \Big( \frac{Z}{w}-z -\AAA \BBB + \frac{Z+c}{Y} -  Y \frac{Z}{Z+c}\Big)-\frac{W}{w}\frac{Z+c}{Y}  \Psi_{21}(w;\SS)     +Z \Big] \,dw. 
\end{split}
\end{align}
Now the relation \eqref{wtRN whRN} completes the proof. 
\end{proof}

\appendix

\section{Fine asymptotic behaviours of the orthogonal polynomials} \label{Section_OP fine asymp}

In this section, we show Proposition~\ref{Thm_BEG op} and Lemma~\ref{Lem_OP diff deri}.

\begin{proof}[Proof of Proposition~\ref{Thm_BEG op}]  
Let $z\in \Omega_\infty\cap D$. We also write $\Psi \equiv \Psi^c$ and
\begin{equation} \label{Psi wh Psi}
\wh{\Psi}(z) \equiv \wh{\Psi}(z;s):= \Psi(z;s) \begin{pmatrix} 
e^{\theta} & 0
\\
0 & e^{-\theta}
\end{pmatrix}\begin{pmatrix} 
0 &1
\\
-1 & 0
\end{pmatrix}^{\chi_L}, \qquad \theta= \frac{z^2}{4}+\frac{s}{2}z,
\end{equation} where $\chi_L$ is the characteristic function whose support is the left of the contour $\hat{\Gamma}_{r=1}$, see \cite[Eqs.(2.6),(3.2), p.27]{MR3849128}.
We also denote 
\begin{equation*}
\begin{split}
\wt{P}(z)& 
:= \begin{pmatrix}
(\sqrt{k}\zeta)^{\frac{c}{2}}   \Big( \frac{z-1}{z} \Big)^{ -\frac{c}{2}  }     & 0 
\\
(\sqrt{k}\zeta)^{\frac{c}{2}} \frac{ U k^{ -\frac{c+1}{2} } }{z-1}   \Big( \frac{z-1}{z} \Big)^{ -\frac{c}{2}  }    -(\sqrt{k}\zeta)^{-\frac{c}{2}} \frac{U}{\sqrt{k}\,\zeta}   \Big( \frac{z-1}{z} \Big)^{ \frac{c}{2}  }    & (\sqrt{k}\zeta)^{-\frac{c}{2}}   \Big( \frac{z-1}{z} \Big)^{ \frac{c}{2}  }   
\end{pmatrix}  \wh{\Psi}( \sqrt{k}\, \zeta; \sqrt{k}\,A ),
\end{split}
\end{equation*}
which gives 
\begin{equation*}
\begin{split}
\wt{P}(z)_{11}&=(\sqrt{k}\zeta)^{ \frac{c}{2} }   \Big( \frac{z-1}{z} \Big)^{ -\frac{c}{2}  }    \wh{\Psi}_{11},
\\
\wt{P}(z)_{21}&=\Big[ (\sqrt{k}\zeta)^{\frac{c}{2}} \frac{ U k^{ -\frac{c+1}{2} } }{z-1}   \Big( \frac{z-1}{z} \Big)^{ -\frac{c}{2}  }    -(\sqrt{k}\zeta)^{-\frac{c}{2}} \frac{U}{\sqrt{k}\,\zeta}   \Big( \frac{z-1}{z} \Big)^{ \frac{c}{2}  }    \Big] \wh{\Psi}_{11}+(\sqrt{k}\zeta)^{-\frac{c}{2}}   \Big( \frac{z-1}{z} \Big)^{ \frac{c}{2}     } \wh{\Psi}_{21}.
\end{split}
\end{equation*}
Let $E$ be the error matrix given in \cite[Subsection 4.3.6]{MR3849128}. 
It satisfies the asymptotic expansion 
\begin{equation}
E(z)=I+\frac{ E^{(1)} }{ k^{\frac12} }+  \frac{ E^{(2)} }{ k^{ \frac{1-c}{2} } } + \frac{ E^{(3)} }{ k^{1+\frac{c}{2}} }+O(\frac{1}{k}),
\end{equation}
where 
\begin{gather*}
E^{(1)} = \Big(-\frac{H}{\zeta} + \frac{H}{z-1} \Big) \sigma_3,
\qquad 
E^{(2)} = \Big[ -\Big( \frac{z-1}{z \zeta} \Big)^{-c} \Big( \frac{Z}{U \zeta} \Big) + \frac{ Z/U }{ z-1  } \Big] \sigma_+,
\\
E^{(3)} = \Big[ \Big( \frac{z-1}{ z \zeta } \Big)^{c} \frac{ (s-Y) U }{ \zeta^2 }-2 \frac{HU}{ (z-1)\zeta } + \frac{2}{3} \frac{ c^2(2H-c-s)+H-c-s }{ z-1 }+ \frac{ U(2H+Y-s) }{z-1}\Big) \Big] \sigma_-.
\end{gather*}
Here
\begin{equation}
\sigma_3=\begin{pmatrix}
1 & 0 
\\
0 & -1 
\end{pmatrix}, \qquad \sigma_+=\begin{pmatrix}
0 & 1
\\
0 & 0 
\end{pmatrix},  \qquad \sigma_-=\begin{pmatrix}
0 & 0
\\
1 & 0 
\end{pmatrix},
\end{equation}
see \cite[Eqs.(4.18),(4.22)]{MR3849128}.
Therefore we have 
\begin{equation} \label{E asymp}
E(z)=\begin{bmatrix}
1+\Big(-\frac{H}{\zeta} + \frac{H}{z-1} \Big)  \frac{ 1 }{ k^{\frac12}  } &   E^{(2)}_{12}  \frac{ 1 }{ k^{ \frac{1-c}{2} } } 
\smallskip 
\\
 E^{(3)}_{21} \frac{1}{ k^{1+\frac{c}{2}} } &  1-\Big(-\frac{H}{\zeta} + \frac{H}{z-1} \Big)  \frac{1 }{ k^{\frac12}  }
\end{bmatrix}+O(\frac{1}{k}).
\end{equation}

The function $\pi_k$ in \cite{MR3849128} is related to $P_k$ as 
\begin{equation} \label{Pk pi k}
 P_k(z)
 = a^k \, \pi_k(z/a). 
\end{equation}
By the asymptotic result in \cite[Subsection 4.4]{MR3849128}, we have
\begin{equation}
\pi_k(z)= z^k \Big(1-\frac{1}{z}\Big)^{ -\frac{c}{2} } [E(z) \wt{P}(z) ]_{11}. 
\end{equation}
Combining the above asymptotic behaviours of $E$ and $\wt{P}$, after long but straightforward computations, 
we obtain
\begin{equation}
\begin{split}
&\quad \pi_k(z) = z^k \Big(\frac{z-1}{z}\Big)^{  -c }   k^{ \frac{c}{4} } \zeta(z)^{ \frac{c}{2} }
\\
&\times \bigg[ \wh{\Psi}_{11} +\Big[ H\Big(\frac{1}{z-1} -\frac{1}{\zeta(z)} \Big)   \wh{\Psi}_{11}+ W  \Big(  \frac{ 1 }{ z-1  }  \Big( \frac{z-1}{z} \Big)^{ c  }    \zeta(z)^{ -c  }- \frac{1}{\zeta(z)}  \Big)   \wh{\Psi}_{21} \Big] \frac{ 1 }{ k^{ \frac12 } } +O( \frac{1}{k^{1+\frac{c}{2}} } ) \bigg].
\end{split}
\end{equation}

Note that by \eqref{def of A(z0)}, we have
\begin{equation}
\sqrt{k}\,\zeta(z)=\Big( z-1-\frac{(z-1)^2}{3} \Big) \sqrt{k}+O(1).
\end{equation}
Recall that 
\begin{equation}
\phi(z;z_0)= \frac{z-1}{z_0}-\log z, \quad \Big(z_0=\frac{t_c^2}{t^2}\Big) \qquad \hat{\varphi}(\lambda)=\log(t_c-\lambda^d)+\frac{\lambda^d}{t_c}-\log t_c 
\end{equation}
and 
\begin{equation}
\phi(z;z_0)= \frac12 \zeta^2(z;z_0)+A(z_0) \zeta(z;z_0), \qquad \zeta(1;z_0) \equiv 0, \qquad A(1)=0, 
\end{equation}
see \cite[Proposition 4.5]{MR3849128}.

By \eqref{def of A(z0)} and \eqref{Psi wh Psi}, we have
\begin{equation}
e^\theta= \exp\Big( \frac{k}{4} \zeta^2+\frac{ \sqrt{k}\,A }{2} \sqrt{k} \zeta \Big) = e^{ \frac{k}{2} \phi(z;z_0) },  \qquad  \wh{\Psi}_{11}( \sqrt{k}\, \zeta; \sqrt{k}\,A ) = e^{ \frac{k}{2} \phi(z;z_0) } \Psi_{11}( \sqrt{k}\, \zeta; \sqrt{k}\,A ). 
\end{equation}
Therefore by \eqref{Pk pi k} we obtain 
\begin{equation}
\begin{split}
&\quad \psi_k(z) =z^{k+c}  e^{\frac{k}{2} \phi(z/a;z_0) }   \Big(\sqrt{k} \, \zeta(z/a)\Big)^{ \frac{c}{2} } \Psi_{11}( \sqrt{k}\, \zeta(z/a;z_0); \sqrt{k}\,A(z_0) )
\\
&\times \bigg[ 1 +\Big[ H\Big(\frac{a}{z-a} -\frac{1}{\zeta(z/a)} \Big)   + W\Big(  \frac{ a }{ z-a  } \Big( \frac{z-a}{z} \Big)^{c}  \frac{1}{\zeta(z/a)^{ c  }}- \frac{1}{\zeta(z/a)}  \Big) \frac{  \Psi_{21}  }{ \Psi_{11}   } \Big] \frac{ 1 }{ k^{ \frac12 } } +O( \frac{1}{k^{1+\frac{c}{2}} } ) \bigg].
\end{split}
\end{equation}
A similar computation can be done for $z$ in other regions of $D$.
Note that 
\begin{equation}
\label{eq op relation}
P_{n,N}(z;a)=\Big(\frac{n}{N}\Big)^{\frac{n}{2}}P_{n,n}\Big(\sqrt{\tfrac{N}{n}}z,\sqrt{\tfrac{N}{n}}a\Big),
\end{equation}
see e.g. \cite[p.304]{MR3670735}.
Now Proposition~\ref{Thm_BEG op} follows from \eqref{def of SSr}. 
\end{proof}


\begin{lem} \label{Lem_Fk asymp}
Let
\begin{equation}
\begin{split}
F_k(z):=e^{\frac{k}{2} \phi(  1+\frac{z/a}{\sqrt{N}} ;z_0) }   \Big(\sqrt{k} \, \zeta\Big( 1+\frac{z/a}{\sqrt{N}} \Big)\Big)^{ \frac{c}{2} } \Psi_{11}\Big( \sqrt{k}\, \zeta\Big( 1+\frac{z/a}{\sqrt{N}};z_0\Big); \sqrt{k}\,A(z_0) \Big).
\end{split}
\end{equation}
Then we have
\begin{equation}
F_k(z)= e^{ \frac{z(z+2\SS)}{4} } z^{ \frac{c}{2} } \Psi_{11}(z;\SS) \Big( 1+\frac{ F_k^{(1)}(z) }{ \sqrt{N} } + \frac{ F_k^{(2)}(z) }{ N } +O(N^{ -\frac32 }) \Big),
\end{equation}
where $F_k^{(1)}$ is given by \eqref{Fk (1)} and 
\begin{align} \label{Fk (2)}
\begin{split}
 F_k^{(2)}(z) & = \Big( \frac{ z(4z^2+6\SS z+3\SS^2) }{24}+\frac{r}{2}z\Big)  \frac{c}{2} \frac{2z+\SS}{6}
\\
&+ \Big[ \frac{ z(4z^2+6\SS z+3\SS^2) }{24}+\frac{r}{2}z+ \frac{c}{2} \frac{2z+\SS}{6}\Big] \Big[ \frac{z(2z+\SS)}{6} \frac{ \pa_z \Psi_{11}}{ \Psi_{11} }+\Big( \frac{\SS^2}{12}+r\Big) \frac{ \pa_s \Psi_{11} }{ \Psi_{11}   } \Big] 
\\
&+  \frac12 \Big( \frac{ z(4z^2+6\SS z+3\SS^2) }{24}+\frac{r}{2}z\Big) ^2+ \Big(  \frac{z (z+\SS)(2z^2+2\SS z +\SS^2) }{ 16 }+r\,\frac{ z(z+\SS)}{4} \Big)  
\\
&+ \frac{c(c-2)}{8} \Big( \frac{2z+\SS}{6} \Big)^2+\frac{c}{2} \Big(\frac{ 7z^2+9\SS z+3\SS^2 }{36}+\frac{r}{6}\Big) 
\\
& +   \Big(\frac{ z(7z^2+9\SS z+3\SS^2) }{36}+\frac{r}{6}z\Big) \frac{ \pa_z \Psi_{11} }{ \Psi_{11} }+\Big(\frac{\SS^3}{36}+\frac{r}{6}\SS\Big) \frac{ \pa_s \Psi_{11}}{ \Psi_{11}  } 
\\
&+  \frac12\Big(\frac{z(2z+\SS)}{6}\Big)^2 \frac{ \pa_z^2 \Psi_{11} }{ \Psi_{11} }+\frac12\Big( \frac{\SS^2}{12}+r\Big)^2 \frac{ \pa_s^2 \Psi_{11} }{ \Psi_{11} }+\frac{z(2z+\SS)}{6} \Big( \frac{\SS^2}{12}+r\Big)\frac{ \pa_z \pa_s \Psi_{11} }{ \Psi_{11} } .
\end{split}
\end{align}
\end{lem}

\begin{proof}
Let $k=N+r$. By \eqref{def of A(z0)}, 
we have
\begin{equation}
\begin{split}
\sqrt{N+r} \, \zeta\Big(1+\frac{z/a}{\sqrt{N}};z_0\Big) 
&= z\Big[1 -\frac{2z+\SS}{6}\frac{1}{\sqrt{N}}+\Big(\frac{ 7z^2+9\SS z+3\SS^2 }{36}+\frac{r}{6}\Big)\frac{1}{N}+O(N^{-\frac32}) \Big] 
\end{split}
\end{equation}
and
\begin{equation}
\sqrt{N+r} \,A(z_0)=\SS-\Big( \frac{\SS^2}{12}+r \Big) \frac{1}{\sqrt{N}}+\Big( \frac{\SS^3}{36}+\frac{r}{6}\SS\Big) \frac{1}{N}+O(N^{-\frac32}).
\end{equation}
Note also that
\begin{equation}
\begin{split}
\frac{N+r}{2}\,\phi\Big( 1+\frac{z/a}{\sqrt{N}}; z_0 \Big) &=  \frac{z(z+2\SS)}{4} -\Big( \frac{ z(4z^2+6\SS z+3\SS^2) }{24}+\frac{r}{2}z\Big) \frac{1}{\sqrt{N}}
\\
&\quad + \Big(  \frac{z (z+\SS)(2z^2+2\SS z +\SS^2) }{ 16 }+r\,\frac{ z(z+\SS)}{4} \Big) \frac{1}{N}+O(N^{-\frac32}). 
\end{split}
\end{equation}
This gives 
\begin{align*}
&\quad e^{\frac{k}{2} \phi(  1+\frac{z/a}{\sqrt{N}} ;z_0) }  e^{ -\frac{z(z+2\SS)}{4} } =  1- \Big( \frac{ z(4z^2+6\SS z+3\SS^2) }{24}+\frac{r}{2}z\Big) \frac{1}{\sqrt{N}}  
\\
&+ \Big[ \frac12 \Big( \frac{ z(4z^2+6\SS z+3\SS^2) }{24}+\frac{r}{2}z\Big) ^2+ \Big(  \frac{z (z+\SS)(2z^2+2\SS z +\SS^2) }{ 16 }+r\,\frac{ z(z+\SS)}{4} \Big)  \Big] \frac{1}{N}+O(N^{-\frac32}).
\end{align*}
We have 
\begin{align*}
  \Big(\sqrt{k} \, \zeta\Big( 1+\frac{z/a}{\sqrt{N}} \Big)\Big)^{ \frac{c}{2} }  z^{ -\frac{c}{2} } & =  1-\frac{c}{2} \frac{2z+\SS}{6}\frac{1}{\sqrt{N}}
\\
&+\Big[ \frac{c(c-2)}{8} \Big( \frac{2z+\SS}{6} \Big)^2+\frac{c}{2} \Big(\frac{ 7z^2+9\SS z+3\SS^2 }{36}+\frac{r}{6}\Big) \Big] \frac{1}{N}+O(N^{-\frac32}).
\end{align*}
Finally, we have 
\begin{align*}
&\quad \Psi_{11}\Big( \sqrt{k}\, \zeta\Big( 1+\frac{z/a}{\sqrt{N}};z_0\Big); \sqrt{k}\,A(z_0) \Big) = \Psi_{11}(z;\SS) 
- \Big[ \frac{z(2z+\SS)}{6} \pa_z \Psi_{11}+\Big( \frac{\SS^2}{12}+r\Big) \pa_s \Psi_{11} \Big] \frac{1}{\sqrt{N}}
\\
&+ \Big[ \Big(\frac{ z(7z^2+9\SS z+3\SS^2) }{36}+\frac{r}{6}z\Big) \pa_z \Psi_{11}+\Big(\frac{\SS^3}{36}+\frac{r}{6}\SS\Big) \pa_s \Psi_{11} \Big] \frac{1}{N}
\\
&+ \Big[ \frac12\Big(\frac{z(2z+\SS)}{6}\Big)^2 \pa_z^2 \Psi_{11}+\frac12\Big( \frac{\SS^2}{12}+r\Big)^2 \pa_s^2 \Psi_{11}+\frac{z(2z+\SS)}{6} \Big( \frac{\SS^2}{12}+r\Big)\pa_z \pa_s \Psi_{11} \Big] \frac{1}{N}+O(N^{-\frac32}).
\end{align*}
Combining all of the above, lemma follows.
\end{proof}

\begin{lem} \label{Lem_Gk asymp}
Let 
\begin{equation*}
\begin{split}
&\quad G_k(z):=1 +\Big[ H(\SS_r)\Big(\frac{a}{z}\sqrt{N} -\frac{1}{\zeta(1+\frac{z/a}{\sqrt{N}})} \Big) 
\\
&+ W(\SS_r)\Big(  \frac{ a }{ z  }\sqrt{N} \Big( \frac{z}{a\sqrt{N}+z} \Big)^{c}  \frac{1}{\zeta(1+\frac{z/a}{\sqrt{N}})^{ c  }}- \frac{1}{\zeta(1+\frac{z/a}{\sqrt{N}})}  \Big) \frac{  \Psi_{21}( \sqrt{k}\, \zeta\Big( 1+\frac{z/a}{\sqrt{N}};z_0\Big); \sqrt{k}\,A(z_0) )  }{ \Psi_{11}( \sqrt{k}\, \zeta\Big( 1+\frac{z/a}{\sqrt{N}};z_0\Big); \sqrt{k}\,A(z_0) )   } \Big] \frac{ 1 }{ k^{ \frac12 } }.
\end{split}
\end{equation*}
Then we have
\begin{align*}
G_k(z)= 1+ \frac{ G_k^{(1)}(z) }{ \sqrt{N} } + \frac{ G_k^{(2)}(z) }{N}+O(N^{-\frac32}) ,
\end{align*}
where $G_k^{(1)}$ is given by \eqref{Gk (1)} and
\begin{align} \label{Gk (2)}
\begin{split}
\qquad &\quad G_k^{(2)}(z)=-\frac{\SS-z}{3z} \,r \,  \Big( H'(\SS)+W'(\SS)\frac{ \Psi_{21} }{ \Psi_{11} } \Big) + H(\SS) \Big( \frac{3z^2+5\SS z+2\SS^2}{ 36z }-\frac{r}{3z}\Big)
\\
&+W(\SS)  \frac{ \Psi_{21} }{ \Psi_{11} }  \Big[ \frac{c}{z} \Big( \frac{21z^2+7\SS z-2\SS^2}{36}+\frac{r}{3} \Big)+\frac{c(c-1)}{2z} \Big( \frac{2z+\SS}{3} \Big)^2 + \frac{3z^2+5\SS z+2\SS^2 }{ 36 z } -\frac{r}{3z}  \Big]
\\
&+ W(\SS) \frac{ \Psi_{21} }{ \Psi_{11} } \frac{\SS-z}{3z}   \Big[ \frac{z(2z+\SS)}{6} \Big( \frac{\pa_z \Psi_{11}}{ \Psi_{11} }-\frac{\pa_z \Psi_{21}}{ \Psi_{21} } \Big) +\Big( \frac{\SS^2}{12}+r\Big) \Big(  \frac{ \pa_s \Psi_{11} }{ \Psi_{11} }-\frac{ \pa_s \Psi_{21} }{ \Psi_{21} } \Big) \Big].
\end{split}
\end{align}
\end{lem}

\begin{proof}
Since 
\begin{align*}
\frac{1}{\zeta(1+\frac{z/a}{\sqrt{N}})}= \frac{\sqrt{N}}{z}+\frac{2z+\SS}{6z}+ \Big( -\frac{3z^2+5\SS z+2\SS^2 }{ 36 z } +\frac{r}{3z} \Big) \frac{1}{\sqrt{N}}+O(N^{-1}),
\end{align*}
we have
\begin{align*}
\Big( \frac{a}{z}\sqrt{N} -\frac{1}{\zeta(1+\frac{z/a}{\sqrt{N}})} \Big) \frac{1}{k^{\frac12}} &= \frac{\SS-z}{3z} \frac{1}{\sqrt{N}}+\Big( \frac{3z^2+5\SS z+2\SS^2}{ 36z }-\frac{r}{3z}\Big) \frac{1}{N}+O(N^{-\frac32})
\end{align*}
and
\begin{align*}
&\quad \frac{ a }{ z  }\sqrt{N} \Big( \frac{z}{a\sqrt{N}+z} \Big)^{c}  \frac{1}{\zeta(1+\frac{z/a}{\sqrt{N}})^{ c  }}- \frac{1}{\zeta(1+\frac{z/a}{\sqrt{N}})} = \frac{\SS-z}{3z}
\\
&+\Big[ \frac{c}{z} \Big( \frac{21z^2+7\SS z-2\SS^2}{36}+\frac{r}{3} \Big)+\frac{c(c-1)}{2z} \Big( \frac{2z+\SS}{3} \Big)^2 + \frac{3z^2+5\SS z+2\SS^2 }{ 36 z } -\frac{r}{3z}  \Big] \frac{1}{\sqrt{N}}+O(N^{-1}).
\end{align*}

We also have 
\begin{align*}
&\quad \frac{\Psi_{21}}{\Psi_{11}}\Big( \sqrt{k}\, \zeta\Big( 1+\frac{z/a}{\sqrt{N}};z_0\Big); \sqrt{k}\,A(z_0) \Big) = \frac{ \Psi_{21}(z,;\SS) }{ \Psi_{11}(z;\SS) } 
\\
&+ \frac{ \Psi_{21}(z;\SS) }{ \Psi_{11}(z;\SS) }  \Big[ \frac{z(2z+\SS)}{6} \Big( \frac{\pa_z \Psi_{11}}{ \Psi_{11} }-\frac{\pa_z \Psi_{21}}{ \Psi_{21} } \Big) +\Big( \frac{\SS^2}{12}+r\Big) \Big(  \frac{ \pa_s \Psi_{11} }{ \Psi_{11} }-\frac{ \pa_s \Psi_{21} }{ \Psi_{21} } \Big) \Big] \frac{1}{\sqrt{N}}+O(N^{-1}).
\end{align*}
Combining above asymptotic behaviours, we conclude the lemma. 
\end{proof}

\begin{proof}[Proof of Lemma~\ref{Lem_OP diff deri}]
Combining Proposition~\ref{Thm_BEG op} with Lemmas~\ref{Lem_Fk asymp} and \ref{Lem_Gk asymp}, we have 
\begin{align} \label{psi k asymp local}
\begin{split}
&\quad \psi_k \Big(a+\frac{z}{\sqrt{N}}\Big) = \Big(a+\frac{z}{\sqrt{N}}\Big)^{k+c}  e^{ \frac{z(z+2\SS)}{4} } z^{ \frac{c}{2} } \Psi_{11}(z;\SS) 
\\
& \times \Big( 1+ \frac{ F_k^{(1)}(z)+G_k^{(1)}(z) }{ \sqrt{N} } + \frac{F_k^{(1)}(z)G_k^{(1)}(z)+F_k^{(2)}(z)+ G_k^{(2)}(z) }{N} + O( \frac{1}{N^{1+\frac{c}{2}}}+\frac{1}{N} ) \Big),
\end{split}
\end{align}
where the $O( \frac{1}{N^{1+\frac{c}{2}}}+\frac{1}{N} )$ term does not depend on $r.$
Here, $F_k^{(2)}$ and $G_k^{(2)}$ $(j=1,2)$ are defined by \eqref{Fk (2)} and \eqref{Gk (2)}. 
Then lemma follows from straightforward computations. 
\end{proof}

\section{Fine asymptotic behaviours of the orthogonal norms} \label{Section_OP norms fine asymp}

By \cite[Proposition 7.1]{MR3280250}, we have 
\begin{equation} \label{hk wt hk}
    h_k=-\frac{1}{2\pi i} \frac{\Gamma(c+k+1)}{N^{c+k+1}}\frac{\widetilde{h}_k}{P_{k+1}(0)}, \qquad \widetilde{h}_k:=\int_\Gamma P_{k}(z)^2 \wt{w}_{k}(z)\, dz, \quad \wt{w}_{k}(z):=\Big(\frac{z-a}{z}\Big)^c\frac{e^{-Naz}}{z^k}.
\end{equation}

\begin{lem} \label{Lem_PN(0)}
As $N \to \infty$, we have 
\begin{equation}
\begin{split}
 P_{N+r,N}(0) =a^{N+r} e^{-N a^2} \Big(       \frac{ 1 }{ N^{ \frac{1-c}{2} } } W(\SS_r) +O(\frac{1}{N})  \Big).
\end{split}
\end{equation}
\end{lem}

\begin{proof}[Proof of Lemma~\ref{Lem_PN(0)}]
For $z \in \Omega_0 \setminus \D$, 
\begin{equation}
\pi_k(z)=e^{k g(z)} \Big( \frac{z-1}{z}\Big)^{ -\frac{c}{2} } [E(z) \wt{N}(z)]_{11}, \qquad \wt{N}(z) 
= 
\begin{pmatrix}
0 & \Big( \frac{z-1}{z}\Big)^{ -\frac{c}{2} } 
\\
-\Big( \frac{z-1}{z}\Big)^{ \frac{c}{2} }  &  \frac{ U k^{ -\frac{1+c}{2} } }{z-1} \Big( \frac{z-1}{z}\Big)^{ -\frac{c}{2} } 
\end{pmatrix},
\end{equation}
see \cite[p.30]{MR3849128}.
By \eqref{E asymp}, we have 
\begin{equation}
\begin{split}
 [E(z) \wt{N}(z)]_{11}  = - \Big(   W  \frac{1}{z-1}   \frac{ 1 }{ k^{ \frac{1-c}{2} } }  +O(\frac{1}{k}) \Big)  \Big( \frac{z-1}{z}\Big)^{ \frac{c}{2} } = -\Big( \frac{z-1}{z } \Big)^{ \frac{c}{2} } W  \frac{1}{z-1}   \frac{ 1 }{ k^{ \frac{1-c}{2} } }  +O(\frac{1}{k}).  
\end{split}
\end{equation}

Therefore we have
\begin{equation}
\pi_N(z)=-e^{k g(z)} \Big( \frac{ W(\SS) }{z-1}   \frac{ 1 }{ N^{ \frac{1-c}{2} } }  +O(\frac{1}{k})  \Big).
\end{equation}
In particular,
\begin{equation}
\pi_k(0)=e^{-k/z_0 } \Big(       \frac{ W(\SS) }{ k^{ \frac{1-c}{2} } }  +O(\frac{1}{k})  \Big)= e^{-N a^2} \Big(    \frac{ W(\SS)   }{ k^{ \frac{1-c}{2} } }  +O(\frac{1}{k})  \Big), \qquad  P_N(0)=a^N e^{-N a^2} \Big(   \frac{ W(\SS) }{ N^{ \frac{1+\gamma}{2} } }  +O(\frac{1}{N})  \Big).
\end{equation}
Using \eqref{eq op relation}, this gives 
\begin{equation}
\begin{split}
 P_{N+r,N}(0) &= \Big( \frac{N+r}{N}\Big)^{ \frac{N+r}{2} } P_{N+r,N+r}\Big(0, \sqrt{\tfrac{N}{N+r}}a\Big)
=a^{N+r} e^{-N a^2} \Big(       \frac{ W(\SS_r) }{ N^{ \frac{1+\gamma}{2} } }  +O(\frac{1}{N})  \Big).
\end{split}
\end{equation}
\end{proof}

\begin{lem} \label{Lem_hn tilde}
As $N \to \infty$, we have 
\begin{equation}
\wt{h}_{N+r,N} = -2\pi i \, e^{-Na^2} a^{N+r+1} \,  \Big( \frac{1}{N^{\frac{1-c}{2}}}W(\SS_r)+O( \frac{1}{N} )\Big) .
\end{equation}
\end{lem}

\begin{proof}[Proof of Lemma~\ref{Lem_hn tilde}]
Let $\wt{Y}$ be the matrix $Y$ in \cite{MR3280250}. 
Then we have
\begin{equation}
\widetilde{h}_k=-2\pi i  \lim_{z\to\infty}z^{k+1}[\wt{Y}(z)]_{12},
\end{equation}
see \cite[Eq.(7.2)]{MR3280250}.
Let
\begin{equation}
\omega_k(z) := \Big(\frac{z-1}{z}\Big)^c\frac{e^{-Na^2z}}{z^k} = a^k\,  \wt{\omega}_{k}(az),
\end{equation}
see \cite[p.18]{MR3849128}.
Therefore by the change of variables, we have
\begin{equation}
\begin{split}
[Y(z)]_{12}& := \frac{1}{2\pi i } \int \frac{ \pi_k(z') }{ z'-z } \omega_k(z')\,dz' 
=\frac{1}{2\pi i } \int \frac{ P_k(z') }{ z'-az }\,  \wt{\omega}_{k}(az)\,dz'=  [\wt{Y}(az)]_{12},
\end{split}
\end{equation}
where $Y$ is a matrix given in \cite[p.19]{MR3849128}.

Note that by the transforms in \cite{MR3849128}, we have
\begin{equation}
\begin{split}
 Y(z)  
 &= e^{\frac{kl}{2}\sigma_3}E(z) \Big( \frac{z-1}{z} \Big)^{ -\frac{c}{2} \sigma_3 } \begin{pmatrix} 0 & 1 
\\
-1 &0 
\end{pmatrix}  e^{-\frac{kl}{2}\sigma_3}e^{kg(z)\sigma_3} \Big(\frac{z-1}{z}\Big)^{-\frac{c}{2}\sigma_3}. 
\end{split}
\end{equation}
Thus by \eqref{E asymp} we have
\begin{equation}
[Y(z)]_{12}=\Big( \frac{1}{k^{\frac{1-c}{2}}}\frac{W}{z-1}+O(k^{-1})\Big)\Big(\frac{z-1}{z}\Big)^{c} e^{kl-kg(z)},
\end{equation}
which gives 
\begin{equation}
[\wt{Y}(z)]_{12}=[Y(z/a)]_{12}=\Big( \frac{1}{k^{\frac{1-c}{2}}}\frac{W}{z/a-1}+O(k^{-1})\Big)\Big(\frac{z-a}{z}\Big)^{c} e^{kl-kg(z/a)}.
\end{equation}
Therefore we obtain
\begin{align*}
\wt{h}_k & = -2\pi i  \lim_{z\to\infty}z^{k+1}[\wt{Y}(z)]_{12}
= -2\pi i  \lim_{z\to\infty}z^{k+1}\Big( \frac{1}{k^{\frac{1-c}{2}}}\frac{W}{z/a-1}+O(k^{-1})\Big)\Big(\frac{z-a}{z}\Big)^{c} e^{kl} \Big( \frac{a}{z} \Big)^{k}
\\
&= -2\pi i  \lim_{z\to\infty} \Big( \frac{1}{k^{\frac{1-c}{2}}}\frac{W}{1/a-1/z}+O(k^{-1})\Big)\Big(\frac{z-a}{z}\Big)^{c} e^{kl} a^{k}
= -2\pi i   \Big( \frac{W}{k^{\frac{1-c}{2}}}+O(k^{-1})\Big) e^{kl} a^{k+1}.
\end{align*}
We have shown that
\begin{equation}
\wt{h}_{N,N}= -2\pi i \,e^{-Na^2} a^{N+1} \,  \Big( \frac{1}{N^{\frac{1-c}{2}}}W(\SS)+O(k^{-1})\Big) .
\end{equation}
Note also that by \eqref{eq op relation}, 
\begin{equation}
\label{eq hn relation}
\widetilde{h}_{n,N}=\Big(\frac{n}{N}\Big)^{\frac{n+1}{2}}\widetilde{h}_{n,n}\Big(\sqrt{\tfrac{N}{n}}a\Big).
\end{equation}
Thus 
\begin{align}
\begin{split}
\wt{h}_{N+r,N} & = \Big(\frac{N+r}{N}\Big)^{\frac{N+r+1}{2}}\widetilde{h}_{N+r,N+r}\Big(\sqrt{\tfrac{N}{N+r}}a\Big) \\
&= -2\pi i \, e^{-Na^2} a^{N+r+1} \,  \Big( \frac{1}{N^{\frac{1-c}{2}}}W(\SS_r)+O(N^{-1})\Big) .
\end{split}
\end{align}
\end{proof}

\begin{proof}[Proof of Lemma~\ref{Lem_orthogonal norm diff}]
By combining \eqref{hk wt hk} with Lemmas~\ref{Lem_PN(0)} and ~\ref{Lem_hn tilde}, we have
\begin{equation}
\begin{split}
  h_{N+r} 
&= \frac{\Gamma(c+N+r+1)}{ N^{c+N+r+1}}  \frac{   W(\SS_r) +O(N^{-\frac{1+c}{2}}) }  {    W(\SS_{r+1}) +O(N^{-\frac{1+c}{2}}) }, \qquad h_N/h_{N-1}=1+O(1/N).
\end{split}
\end{equation}
We also have
\begin{align}
\frac{N+c}{N} h_{N-1}-h_N &= \frac{ \Gamma(N+c) }{  N^{N+c} } \Big(    \frac{   W(\SS_{-1}) +O(N^{-\frac{1+c}{2}}) }  {    W(\SS_{0}) +O(N^{-\frac{1+c}{2}}) }  -  \frac{   W(\SS_0) +O(N^{-\frac{1+c}{2}}) }  {    W(\SS_{1}) +O(N^{-\frac{1+c}{2}}) } \Big),
\\
\frac{N+c+1}{N} h_{N}-h_{N+1} &= \frac{ \Gamma(N+c+1) }{  N^{N+c+1} } \Big(    \frac{   W(\SS_0) +O(N^{-\frac{1+c}{2}}) }  {    W(\SS_{1}) +O(N^{-\frac{1+c}{2}}) }  -  \frac{   W(\SS_1) +O(N^{-\frac{1+c}{2}}) }  {    W(\SS_{2}) +O(N^{-\frac{1+c}{2}}) }  \Big). 
\end{align}
Note here that 
\begin{align}
\begin{split}
&\quad \frac{ W(\SS_{r-1}) }{ W(\SS_{r}) } -\frac{ W(\SS_{r}) }{ W(\SS_{r+1}) } = \Big[ \frac{W''(\SS)}{W(\SS)} -\Big( \frac{W'(\SS)}{W(\SS)} \Big)^2 \Big]\frac{1}{N}
\\
&+\Big[ \frac32 \frac{W'}{W} -\SS \Big( \frac{W'}{W} \Big)^2 - \Big( \frac{W'}{W} \Big)^3+\SS \frac{W''}{W} +r \Big\{ 3 \frac{W'' W'}{W^2} -2 \Big( \frac{W'}{W} \Big)^3 -\frac{W'''}{W} \Big\}   \Big] \frac{1}{N\sqrt{N}}+O(N^{-2}).
\end{split}
\end{align}
Then lemma follows from the Stirling's formula.
\end{proof}

\section{Asymptotic behaviours of the $1$-point function} \label{appendix_asymptotic 1pt}

Let us recall that 
\begin{align}
\bp_z \wt{R}^0(z) & = -\frac{1}{\sqrt{2\pi}} e^{-\frac12 (z+\bar{z})^2 }, 
\\
\bp_z \wt{R}^1(z) & = -\frac{1}{\sqrt{2\pi}} e^{-\frac12 (z+\bar{z})^2 } +  \frac{1}{\sqrt{2\pi}} \erfc( \tfrac{z}{\sqrt{2}} ) e^{-|z|^2 } \Big(  e^{-\frac{\bar{z}^2}{2}} + \sqrt{ \frac{\pi}{2} } z  \erfc( \tfrac{ \bar{z} }{\sqrt{2}} )   \Big).
\end{align}
Note that as $z \to \infty$, 
\begin{equation}
\Psi_{11}(z;s)=z^{-\frac{c}{2}} e^{ -\frac{z^2}{4}-\frac{s z}{2} } \Big(1+O(\frac{1}{z})\Big), \qquad \Psi_{21}(z;s)=z^{-\frac{c}{2}} e^{ -\frac{z^2}{4}-\frac{s z}{2} } O(\frac{1}{z}). 
\end{equation}
see \cite[p.9]{MR3849128}.
Using these, we obtain that as $z \to \infty$, 
\begin{align*}
e^{ -|z|^2-\frac{ (z+\SS)^2+(\bar{z}+\SS)^2 }{4} } \,|z|^c |\Psi_{11}(z;\SS)|^2 &= e^{-\frac12(z+\bar{z}+\SS)^2} \Big(1+O(\frac{1}{z})\Big)
\end{align*}
and 
\begin{align*}
 W\frac{\Psi_{21}(z;s)}{\Psi_{11}(z;s)} \Big( \SS+\frac{Z}{\overline z}-z-\frac{W'}{W}-\AAA \BBB_1 - \frac{ \pa_s \Psi_{11}(0,\SS) }{ \Psi_{11}(0,\SS)   } \Big)-\frac{W^2}{\overline z}\frac{c+Z}{Y} \Big| \frac{\Psi_{21}(z;s)}{\Psi_{11}(z;s)} \Big|^2   +Z= O(1).
\end{align*}
Therefore by Proposition~\ref{Prop_asymptotic derivative} we obtain 
\begin{equation}
 \bp \wt{R}^c(z) =O(1) e^{-\frac12(z+\bar{z}+\SS)^2}, \qquad z \to \infty. 
\end{equation}
We mention that this asymptotic behaviour can be directly checked for $c=0,1.$

We now consider the asymptotic behaviour near the origin. 
Using the Lax pair given in \cite[Eq.(2.1)]{MR3849128}, we have 
$$
\frac{d}{dz} \Psi=\Big( A_0 \frac{1}{z} +\dots \Big) \Psi, \qquad A_0 = \begin{pmatrix}
Z+c & Z
\\
UY & U Y
\end{pmatrix}
\begin{pmatrix}
-\frac{c}{2} & 0
\\
0 & \frac{c}{2}
\end{pmatrix} \begin{pmatrix}
Z+c & Z
\\
UY & U Y
\end{pmatrix}^{-1}.
$$
Then it follows from  
\begin{equation}
\Psi=  \begin{pmatrix}
Z+c & Z
\\
UY & U Y
\end{pmatrix}\big(I+O(z)\big) \begin{pmatrix}
z^{-\frac{c}{2} } & 0 
\\
0 & z^{ \frac{c}{2} }
\end{pmatrix} \cdot (\textup{Regular}), \qquad (z \to 0)
\end{equation}
that 
\begin{equation}
\Psi_{11}(z;\SS)=\frac{Z+c}{z^{c/2}}(1+O(z)),\qquad \Psi_{21}(z;\SS)=\frac{UY}{z^{c/2}}(1+O(z)), \qquad (z \to 0).
\end{equation}
Therefore 
\begin{equation}
\frac{ \Psi_{21}(0;\SS) }{ \Psi_{11}(0;\SS) } = \frac{ U Y }{ Z+c } =\frac{Y}{W} \frac{ Z }{Z+c}
\end{equation}
By \cite[Eq.(2.1)]{MR3849128}, we have
\begin{align}
\Psi'_{11}& =-\Big(\frac{z+s}{2}+\frac{Z+c/2}{z}\Big)\Psi_{11}+\Big(\frac{Z}{U}+\frac{1}{z}\frac{Z(Z+c)}{YU }\Big)\Psi_{21},
\\
\Psi'_{21}&=-(U+\frac{UY}{z})\Psi_{11}+(\frac{z+s}{2}+\frac{Z+c/2}{z})\Psi_{21}.
\end{align}
This gives 
\begin{equation}
\frac{\Psi_{21}\Psi'_{11}-\Psi_{11}\Psi'_{21}}{\Psi_{11}^2}= \Big(\frac{Z}{U}+\frac{1}{z}\frac{Z(Z+c)}{YU }\Big) \frac{\Psi_{21}^2}{\Psi_{11}^2}-2\Big(\frac{z+s}{2}+\frac{Z+c/2}{z}\Big)\frac{\Psi_{21}}{\Psi_{11}}+\Big(U+\frac{UY}{z}\Big).
\end{equation}
Combining the above, after simplifications, we obtain that as $z\to 0,$
\begin{align}
 \frac{ \Psi_{21}(z;\SS) }{ \Psi_{11}(z;\SS) } & = \frac{ \Psi_{21}(0;\SS) }{ \Psi_{11}(0;\SS) }+\frac{\Psi_{21}(0;\SS)\Psi'_{11}(0;\SS)-\Psi_{11}(0;\SS)\Psi'_{21}(0;\SS)}{\Psi_{11}(0;\SS)^2}z+O(z^2)
\\
&=\frac{ U Y }{ Z+c }+z\Big[\frac{Z}{U} \Big(\frac{ U Y }{ Z+c }\Big)^2-s\frac{ U Y }{ Z+c }+U\Big]+O(z^2).
\end{align}

On the other hand, by \eqref{Psi pa s 0}, we have 
\begin{align*}
&\quad   W\frac{\Psi_{21}(z;\SS)}{\Psi_{11}(z;\SS)} \Big(\frac{Z}{\overline z}- \frac{W}{\overline z}\frac{c+Z}{Y}  \overline{\frac{\Psi_{21}(z;\SS)}{\Psi_{11}(z;\SS)}} -z+\frac{Z+c}{Y}-\AAA \BBB_1 - \frac{ \pa_s \Psi_{11}(0,\SS) }{ \Psi_{11}(0,\SS)   } \Big)   +Z
\\
&= \frac{1}{\bar{z}} W \Big(  Z\frac{\Psi_{21}(z;\SS)}{\Psi_{11}(z;\SS)} -W \frac{Z+c}{Y} \Big| \frac{\Psi_{21}(z;\SS)}{\Psi_{11}(z;\SS)} \Big|^2 \Big)+  W\frac{\Psi_{21}(z;\SS)}{\Psi_{11}(z;\SS)}   \Big( -z+\frac{Z+c}{Y}-\AAA \BBB_1 -  Y \frac{Z}{Z+c} \Big)   +Z
\end{align*}
Note that
\begin{align*}
W \Big( Z\frac{\Psi_{21}(0;\SS)}{\Psi_{11}(0;\SS)} -W \frac{Z+c}{Y} \Big| \frac{\Psi_{21}(0;\SS)}{\Psi_{11}(0;\SS)} \Big|^2 \Big) &=  W Z\frac{Y}{W} \frac{ Z }{Z+c} -W^2 \frac{Z+c}{Y} \Big( \frac{Y}{W} \frac{ Z }{Z+c} \Big)^2 =0. 
\end{align*}
Thus we have 
\begin{align}
 Z\frac{\Psi_{21}(z;\SS)}{\Psi_{11}(z;\SS)} -W \frac{Z+c}{Y} \Big| \frac{\Psi_{21}(z;\SS)}{\Psi_{11}(z;\SS)} \Big|^2 
&= -Z\Big[\frac{Z}{U} \Big(\frac{ U Y }{ Z+c }\Big)^2-s\frac{ U Y }{ Z+c }+U\Big] \bar{z}+O(z^2).
\end{align}
Therefore we obtain
\begin{equation}
\frac{1}{\bar{z}} W \Big(  Z\frac{\Psi_{21}(z;\SS)}{\Psi_{11}(z;\SS)} -W \frac{Z+c}{Y} \Big| \frac{\Psi_{21}(z;\SS)}{\Psi_{11}(z;\SS)} \Big|^2 \Big)=-ZW\Big(W(\frac{ U Y }{ Z+c })^2-s\frac{ U Y }{ Z+c }+U\Big)+O(z).
\end{equation}
On the other hand, 
\begin{align*}
W\frac{\Psi_{21}(0;\SS)}{\Psi_{11}(0;\SS)}   \Big( \frac{Z+c}{Y}-\AAA \BBB -  Y \frac{Z}{Z+c} \Big)   +Z 
&= 2Z- Y \frac{ Z }{Z+c}\Big( \AAA \BBB +  Y \frac{Z}{Z+c} \Big).
\end{align*}
By Proposition~\ref{Prop_asymptotic derivative}, this gives that as $z \to 0$, 
\begin{equation}
\begin{split}
\bp_z \wt{R}^c(z) & =  \frac{ \AAA \mathfrak{D} }{ \sqrt{2\pi} }\,e^{-\frac12 (z+\bar{z}+\SS)^2 }+o(1).
\end{split}
\end{equation}
where 
\begin{equation}
\mathfrak{D}= -ZW\Big(W(\frac{ U Y }{ Z+c })^2-\SS \frac{ U Y }{ Z+c }+U\Big)+  2Z- Y \frac{ Z }{Z+c}\Big( \AAA \BBB +  Y \frac{Z}{Z+c} \Big).
\end{equation}

\section{Scaling limits for the bulk case} \label{appendix_bulk}

Here, we consider the case that $a \in (0,1).$
Let us first briefly recall the strong asymptotics of $P_k$ from \cite{MR3670735}. Let 
\begin{equation}
\phi_A(\zeta):=a(\zeta-a)-\log \frac{\zeta}{a}=\frac{az}{\sqrt{N}}-\log \Big( 1+\frac{z}{a\sqrt{N}} \Big).
\end{equation}
Note that as $N \to \infty,$
\begin{equation}\label{phi_A asy}
\phi_A(\zeta)=\frac{a^2-1}{a}\frac{z}{\sqrt{N}}+\frac{z^2}{2a^2} \frac1N+O(N^{-\frac32}).
\end{equation}
Let us also write 
\begin{equation}\label{phi A hat}
\wh{\phi}_A(\zeta):=\frac{N}{N-1}a(\zeta-a)-\log \frac{\zeta}{a}=\phi_A(\zeta)+\frac{1}{N-1}a(\zeta-a).
\end{equation}

By \cite[Theorem 3]{MR3670735},  for $\zeta$ in a neighbourhood of $a$, we have
\begin{align}\label{PN LY asy}
\begin{split}
P_N(\zeta)&=\zeta^N \Big( \frac{\zeta}{\zeta-a} \Big)^c
 \Big[ 1-(-N\phi_A(\zeta))^c e^{N \phi_A(\zeta) } \Big(  \hat{f}(-N\phi_A(\zeta)) +O(\tfrac1N)\Big)    +O(\tfrac{1}{N^\infty}) \Big]
\end{split}
\end{align}
and 
\begin{align}\label{PN1 LY asy}
\begin{split}
P_{N-1}(\zeta)&=\zeta^{N-1} \Big( \frac{\zeta}{\zeta-a} \Big)^c 
\\
&\times \Big[ 1-(-(N-1)\wh{\phi}_A(\zeta))^c e^{ (N-1) \wh{\phi}_A(\zeta) } \Big(  \hat{f}(-(N-1)\wh{\phi}_A(\zeta)) +O(\tfrac1N)\Big)    +O(\tfrac{1}{N^\infty}) \Big],
\end{split}
\end{align}
where the error bound $O(\tfrac{1}{N^\infty})$ means that $O(\tfrac{1}{N^k})$ for all $k>0$. 
Here 
\begin{equation}
\hat{f}(z):=-\frac{1}{2\pi i} \int_{ \mathcal{L} } \frac{e^s}{ s^c(s-z) }\,ds, 
\end{equation}
where the integration contour $\LL$ begins at $-\infty$, encircles the origin once in the counter-clockwise direction and returns to $-\infty$. 
Note that as $z \to \infty$,
\begin{equation}\label{f hat z inf}
\hat{f}(z)=\frac{1}{2\pi i} \int_{ \mathcal{L} } s^{-c}e^s\,ds \cdot \frac{1}{z}+O(|z|^{-2})=\frac{1}{\Gamma(c)} \frac{1}{z}+O(|z|^{-2}).
\end{equation}

For general $c>-1$, we present an alternative derivation of \eqref{K ML da} and \eqref{K R wh bulk} by virtue of the Christoffel-Darboux identity. Recall that $P$ denotes the regularised incomplete gamma function.

\begin{thm}\label{Thm_largeN pc} \emph{(Large-$N$ limit for the bulk case)}
For each $c>-1$, we have 
\begin{equation}
\wt{R}_{\textup{bulk}}^c(z)=P(c,|z|^2).
\end{equation}
\end{thm}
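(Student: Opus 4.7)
The plan is to combine the local Christoffel--Darboux identity of Corollary~\ref{Cor_CDI local} with the Lee--Yang strong asymptotics \eqref{PN LY asy}--\eqref{PN1 LY asy} to compute the $\bp$-derivative of $\wt R_N^c$, generalising the exactly solvable template of Subsection~\ref{Subsec_esolvable largeN} to arbitrary $c>-1$. The target is
\begin{equation*}
\bp_z\wt R_N^c(z)=\frac{z\,|z|^{2c-2}}{\Gamma(c)}\,e^{-|z|^2}+o(1),
\end{equation*}
uniformly on compact subsets of $\C\setminus\{0\}$. Since $P(c,|z|^2)$ has precisely this $\bp$-derivative and $\wt R^c_{\textup{bulk}}(z)\to 1$ as $|z|\to\infty$ (interior bulk normalisation), integration then identifies $\wt R^c_{\textup{bulk}}(z)=P(c,|z|^2)$; this also matches the $c=1$ outcome recorded in Subsection~\ref{Subsec_esolvable largeN}.

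The first step is to substitute $\zeta=a+z/\sqrt{N}$ into \eqref{PN LY asy}--\eqref{PN1 LY asy}, using \eqref{phi_A asy} and the identity $\zeta^N e^{N\phi_A(\zeta)}=a^N e^{a\sqrt{N}z}$ noted in Subsection~\ref{Subsec_esolvable largeN}. Each of $\psi_N(\zeta)$, $\psi_{N\pm 1}(\zeta)$, and $\psi_N'(\zeta)$ decomposes into a ``smooth'' piece with leading factor $a^{N+c}e^{\sqrt{N}z/a-z^2/(2a^2)}$ and a ``correction'' piece with leading factor $a^{N+c}e^{a\sqrt{N}z}(-N\phi_A(\zeta))^c\hat f(-N\phi_A(\zeta))$, where
\begin{equation*}
-N\phi_A(\zeta)=\sqrt{N}\,\tfrac{1-a^2}{a}\,z-\tfrac{z^2}{2a^2}+O(N^{-1/2}).
\end{equation*}
For $\re z>0$ the correction piece is exponentially subdominant, while for $\re z<0$ the smooth piece is; in either regime the large-argument asymptotic \eqref{f hat z inf} of $\hat f$ brings in the decisive factor $1/\Gamma(c)$.

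Next I assemble the differences $\psi_N-\zeta\psi_{N-1}$ and $\psi_{N+1}-\zeta\psi_N$ appearing in Corollary~\ref{Cor_CDI local}. The leading smooth pieces cancel by Taylor expansion of $\zeta=a+z/\sqrt{N}$, leaving a residue of order $1/\sqrt{N}$ carrying a factor $z$; the correction pieces cancel in parallel, leaving a residue carrying a factor $z^c$ (from $(-N\phi_A)^c$), which is what ultimately produces the $|z|^{2c-2}$ in the limit. The norming combinations $\tfrac{N+c}{N}h_{N-1}-h_N$ and $\tfrac{N+c+1}{N}h_N-h_{N+1}$ admit analogous asymptotics extracted from \eqref{PN LY asy}; alternatively they can be obtained from the matrix identity $B=\tfrac{1}{N}(I+L)A^*(I+U^*)^{-1}$ established in the proof of Theorem~\ref{Thm_CDI}. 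The ratio $P_{N+1}(a)/P_N(a)\sim a$ then follows from the leading coefficient together with \eqref{PN LY asy}.

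Finally, plugging everything into $\RN{1}_N^c(z)-\RN{2}_N^c(z)$, the prefactor $e^{-a^2N-a\sqrt{N}(z+\bar z)}/\sqrt{N}$, the $a^{\pm N}$ and $e^{-Na^2}$ factors, and the Gaussians in $\sqrt{N}$ all cancel across the two terms, with the $z^c$ factor from $\RN{2}$ pairing with a $\bar z^{c-1}$ factor arising from the $\bp_\eta$-differentiation in $\RN{1}$ to form $z|z|^{2c-2}/\Gamma(c)$. The main obstacle is precisely this bookkeeping: verifying that the rapidly oscillating exponentials in $\sqrt{N}$ cancel between $\RN{1}$ and $\RN{2}$, that the smooth/smooth and correction/correction cancellations together produce the advertised $z^c\bar z^{c-1}$ structure, and that both half-planes $\re z\gtrless 0$ yield the same rotationally symmetric limit (by two parallel computations in the style of Subsection~\ref{Subsec_esolvable largeN}). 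Once the derivative limit is in hand, integration using $\wt R^c_{\textup{bulk}}(z)\to 1$ as $|z|\to\infty$ completes the identification with $P(c,|z|^2)$.
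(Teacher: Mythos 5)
Your plan follows the paper's strategy (Christoffel--Darboux identity, Corollary~\ref{Cor_CDI local}, combined with the strong asymptotics \eqref{PN LY asy}--\eqref{PN1 LY asy}), but there is a genuine gap in how you pin down the constant.

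You announce the target derivative asymptotic with the explicit constant $1/\Gamma(c)$, and you claim the norming combinations $\tfrac{N+c}{N}h_{N-1}-h_N$, $\tfrac{N+c+1}{N}h_N-h_{N+1}$ ``admit analogous asymptotics extracted from \eqref{PN LY asy}.'' But \eqref{PN LY asy}--\eqref{PN1 LY asy} are asymptotics for the monic polynomials only; they do not give the orthogonal norms $h_j$. The matrix identity $B=\tfrac{1}{N}(I+L)A^*(I+U^*)^{-1}$ you offer as an alternative is a purely algebraic relation established for finite $N$; it produces no asymptotic information either. The paper deliberately avoids this difficulty: it bundles all the uncontrolled norming ratios into an unknown prefactor $C_N(a)$, proves only that $\bp_z\wt R_N^c(z)=C_N(a)\bigl(z|z|^{2c-2}e^{-|z|^2}/\Gamma(c)+o(1)\bigr)$ for $\re z<0$, and then infers the existence of $C(a):=\lim C_N(a)$ a posteriori from the known existence of the non-trivial limit $\wt R^c_{\textup{bulk}}$ -- it never computes $C(a)$ from the asymptotics.

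Because $C(a)$ is not computed, integration must be supplemented with two boundary conditions, not one. The paper writes $\wt R^c_{\textup{bulk}}(z)=C(a)\,P(c,|z|^2)+\wh C(a)$ and then uses both the normalisation $\wt R^c_{\textup{bulk}}(z)\to 1$ as $z\to\infty$ and the small-$z$ behaviour $\wt R^c_{\textup{bulk}}(z)=O(|z|^{2c})$ (from \cite[Theorem 1.2]{ameur2018random}) to conclude $C(a)=1$ and $\wh C(a)=0$. You invoke only the condition at infinity; with an undetermined multiplicative constant this gives one equation for two unknowns and does not close the argument. Either you need to show directly $C_N(a)\to 1$ (which, as noted, the cited asymptotics do not provide), or you need to add the boundary condition at $z=0$.

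A smaller difference: you plan to run parallel computations in both half-planes $\re z\gtrless 0$. The paper computes only $\re z<0$ and then transfers the result to $\re z>0$ via the rotational invariance of the limiting point process (\cite[Section 5]{ameur2018random}), which halves the analytic work.
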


\begin{proof}
By \eqref{PN LY asy}, we have 
\begin{equation}
\label{psiN LY asy} 
	\psi_N(\zeta)=\zeta^{N+c} \Big[ 1-(-N\phi_A(\zeta))^c e^{N \phi_A(\zeta) } \Big(  \hat{f}(-N\phi_A(\zeta)) +O(\tfrac1N)\Big)    +O(\tfrac{1}{N^\infty}) \Big].
\end{equation}
Differentiating \eqref{psiN LY asy}, we have
\begin{align*}
\psi_N'(\zeta)&=(N+c)\zeta^{N+c-1}(1+O(\tfrac{1}{N^\infty}) )-(N+c)\zeta^{N+c-1}(-N\phi_A(\zeta))^c e^{N \phi_A(\zeta) } \Big(  \hat{f}(-N\phi_A(\zeta)) +O(\tfrac1N)\Big)
\\
&+N\zeta^{N+c-1}(a\zeta-1)(-N\phi_A(\zeta))^c e^{N \phi_A(\zeta) } \Big(  \big(\tfrac{-c}{N\phi_A(\zeta)}-1\big)\big(\hat{f}(-N\phi_A(\zeta))+O(\tfrac1N)\big)+\hat{f}'(-N\phi_A(\zeta)) \Big),
\end{align*}
where we have used $\phi_A'(\zeta)=(a\zeta-1)/\zeta.$ 
Rearranging the terms using \eqref{phi_A asy} and \eqref{f hat z inf}, we have
\begin{align*}
\psi_N'(\zeta)&=(N+c)\zeta^{N+c-1}(1+O(\tfrac{1}{N^\infty}) )
-Na\zeta^{N+c}(-N\phi_A(\zeta))^c e^{N \phi_A(\zeta) }\big( \hat{f}(-N\phi_A(\zeta))+O(\tfrac1N)\big).
\end{align*}
Then it follows from 
\begin{equation} \label{zeta power exp}
\zeta^{N+c}= a^{N+c} e^{\frac{1}{a}\sqrt{N}z- \frac{z^2}{2a^2}} \cdot (1+o(1))
\end{equation}
and \eqref{phi_A asy} that 
$$
\psi_N'(\zeta)=a^{N+c-1}N \Big[   e^{\frac{1}{a}\sqrt{N}z- \frac{z^2}{2a^2}} \cdot(1+o(1)) - \Big(  \hat{f}(-N\phi_A(\zeta)) +O(\tfrac1N)\Big) a^2 ( \tfrac{1-a^2}{a}\sqrt{N}z ) ^{c} e^{ a\sqrt{N}z }  \cdot(1+o(1))  \Big].
$$

Now let us compute the asymptotic of $\psi_{N}(\zeta)-\zeta \psi_{N-1}(\zeta).$
By \eqref{psiN LY asy} and \eqref{PN1 LY asy}, \begin{align*}
\psi_{N}(\zeta)-\zeta \psi_{N-1}(\zeta)&= \zeta^{N+c}(-N{\phi}_A(\zeta))^c e^{N {\phi}_A(\zeta) } \Big(  \hat{f}(-N{\phi}_A(\zeta)) +O(\tfrac1N)\Big)\\
	&-\zeta^{N+c}(-(N-1)\wh{\phi}_A(\zeta))^c e^{(N-1) \wh{\phi}_A(\zeta) } \Big(  \hat{f}(-(N-1) \wh{\phi}_A(\zeta)) +O(\tfrac1N)\Big).
\end{align*}
Note that by \eqref{phi A hat}, 
\begin{align*}
(N-1) \wh{\phi}_A(\zeta)- N\phi_A(\zeta) &= a(\zeta-a)-\phi_A(\zeta) = \frac{z}{a} \frac{1}{\sqrt{N}}+O(\frac1N). 
\end{align*}
Using this, we have 
\begin{align*}
\Big( \frac{N-1}{N}\frac{\hat{\phi}_A(\zeta)}{\phi_A(\zeta)} \Big)^c e^{(N-1) \wh{\phi}_A(\zeta)- N\phi_A(\zeta) } \frac{ \hat{f}(-(N-1)\hat{\phi}_A(\zeta)) }{ \hat{f}(-N{\phi}_A(\zeta)) }=1+\frac{z}{a} \frac{1}{\sqrt{N}}+O(\frac1N). 
\end{align*}
This gives that 
\begin{align*}
\psi_{N}(\zeta)-\zeta \psi_{N-1}(\zeta)&=\zeta^{N+c}(-N\phi_A(\zeta))^c e^{N \phi_A(\zeta) }\hat{f}(-N\phi_A(\zeta)) \frac{1}{a} \frac{1}{\sqrt{N}} \cdot (z+O(\tfrac{1}{\sqrt{N}})). 
\end{align*}
Then it again follows from \eqref{zeta power exp} and \eqref{phi_A asy} that 
\begin{equation}
\psi_{N}(\zeta)-\zeta \psi_{N-1}(\zeta)= a^{N+c-1} e^{a\sqrt{N}z} 
( \tfrac{1-a^2}{a}\sqrt{N}z ) ^{c} \hat{f}(-N\phi_A(\zeta)) \frac{1}{\sqrt{N}}\cdot (z+o(1)).
\end{equation}

Note that by \eqref{PN LY asy}, we have
\begin{equation}
	\frac{P_{N+1}(a)}{P_N(a) } = a+o(1).
\end{equation}
Using the above asymptotic behaviours, we obtain 
\begin{align}
\begin{split}
	\RN{1}_N^c(z)
	&= \frac{e^{ -N a^2 } }{N} \frac{ a^{2N+2c-2}\,( \tfrac{1-a^2}{a}\sqrt{N} ) ^{c-1}  }{ \tfrac{N+c}{N}h_{N-1}-h_{N}  }  
	\\
	& \times \Big( e^{(\frac{1}{a}-a)\sqrt{N}\bar{z}- \frac{\bar{z}^2}{2a^2}}-\hat{f}(-N\phi_A(\overline{\zeta}))a^2 ( \tfrac{1-a^2}{a}\sqrt{N}\bar{z} ) ^{c}  \Big) \cdot (z^c+o(1))
	\end{split}
\end{align}
and
\begin{align}
\begin{split}
	\RN{2}^c_N(z)
	&= \frac{e^{ -N a^2 }}{N}  \frac{h_N/h_{N-1}\,a^{2N+2c} ( \tfrac{1-a^2}{a}\sqrt{N} ) ^{c-1}}{ \tfrac{N+c+1}{N} h_N-h_{N+1}    }
	\\
	& \times  \Big(  e^{(\frac{1}{a}-a)\sqrt{N}\bar{z}- \frac{\bar{z}^2}{2a^2}}-\hat{f}(-N\phi_A(\overline{\zeta})) ( \tfrac{1-a^2}{a}\sqrt{N}\bar{z} ) ^{c} \Big)  \cdot (z^c+o(1)).
	\end{split}
\end{align}
Combining above equations with 
\begin{equation} \label{f hat phiA}
\hat{f}(-N\phi_A(\zeta)) = \frac{1}{\Gamma(c)} \Big( \frac{1}{\tfrac{1-a^2}{a} \sqrt{N}z} +o(1)\Big),
\end{equation}
we obtain that for $\re z<0,$
\begin{equation}
	\bp_z \wt{R}_N(z) = C_N(a)\, \Big( \frac{z\,|z|^{2c-2} e^{-|z|^2}}{\Gamma(c)} +o(1) \Big),
\end{equation}
where
$$
C_N(a):=\frac{e^{ -N a^2 }}{N}  a^{2N+2c} ( \tfrac{1-a^2}{a}\sqrt{N} ) ^{2c-2}  \Big(  \frac{ h_N/h_{N-1} }{ \tfrac{N+c+1}{N} h_N-h_{N+1}    } -\frac{ 1 }{ \tfrac{N+c}{N}h_{N-1}-h_{N}  }  \Big).
$$
We remark here that the case $\re z>0$ follows from the case $\re z<0$ since in the end, the limiting point process has the rotation invariance, see \cite[Section 5]{ameur2018random}.

Since $\wt{R}_N$ has a non-trivial limit $\wt{R}$, the existence of the limit
\begin{equation}
C(a):=\lim_{N\to\infty} C_N(a),\qquad (C(a)\not=0)
\end{equation}
follows. Therefore we obtain that 
\begin{equation}
\wt{R}_{\textup{bulk}}^c(z)=C(a)\,P(c,|z|^2)+\wh{C}(a),
\end{equation}
where $\wh{C}(a) \in \R$ is some constant. 

We now recall from the general theory on determinantal point process that 
\begin{align} \label{R bulk asymp}
	\wt{R}_{\textup{bulk}}^c(z) &\to 1, \qquad \qquad (z\to \infty),
	\\
		\wt{R}_{\textup{bulk}}^c(z)&=O(|z|^{2c}) , \qquad (z\to 0),
\end{align}
see \cite[Theorem 1.4]{ameur2018random}. This behaviour implies that $C(a)=1$ and $\wh{C}(a)=0,$ which completes the proof.  
\end{proof}

\bibliographystyle{abbrv}
\bibliography{RMTbib}

\end{document}